\documentclass[preprint]{elsarticle}
\usepackage[utf8x]{inputenc}
\usepackage[OT1]{fontenc}
\usepackage{xcolor}
%\usepackage{graphicx}

%\usepackage[hyperindex,colorlinks=true,
%     linkcolor=tumblue,citecolor=citegreen,urlcolor=mailviolet,
  %   filecolor=linkred]{hyperref}
  \usepackage{amsthm}
\usepackage{amsmath, latexsym, amsfonts, amssymb, amscd,bbm,stmaryrd}
\usepackage[english]{babel}

\usepackage{mathrsfs}
%\usepackage[dvips]{graphicx}

%\externaldocument[][nocite]{ex_supplement}

%\pagestyle{empty}

%\setlength{\oddsidemargin}{5mm}
%\setlength{\evensidemargin}{5mm}
%\setlength{\textwidth}{150mm}
%\setlength{\headheight}{0mm}
%\setlength{\headsep}{12mm}
%\setlength{\topmargin}{0mm}
%\setlength{\textheight}{220mm}
%\setcounter{secnumdepth}{2}

%\usepackage{color}

 % \newtheorem{thm}{Theorem}
%\newsiamthm{hypothesis}{Hypothesis}
%\newsiamthm{claim}{Claim}
%\newsiamremark{remark}{Remark}
%\crefname{hypothesis}{Hypothesis}{Hypotheses}
%%\numberwithin{equation}{section}
%
%\newtheorem{thm}{Theorem}[section]
%\newtheorem{corollary}[thm]{Corollary}
%%\newtheorem{lem}[thm]{Lemma}
%%\newtheorem{prop}[thm]{Proposition}
%%\newtheorem{defin}{Definition}
%\newtheorem{rem}{Remark}
\newtheorem{theorem} {Theorem} 
\newtheorem{lemma}[theorem]{Lemma}
\newtheorem{corollary}[theorem]{Corollary}

\newtheorem{hypothesis}{Assumption} 
%\newproof{proof}{Proof}

%\newtheorem{proposition}[thm]{Proposition}
%%\newtheorem{corollary}[thm]{Corollary}
%%\newtheorem{example}{Example}
%\newtheorem{assump}{Assumption}
%
%%\newenvironment{proof}{\paragraph{Proof:}}{\hfill$\square$}

\renewcommand{\tilde}{\widetilde}

%\newcommand{\qed}{\hfill $\quad \Box$ \bigskip}

%%%%%%%%%%%%%%%%%%%%%%%%%%%%%%%%%%%%%%%%%%%%%%%%%%%%%%%%%%%%%%%%%%%%%%%%%%%%%%
%%%%%%%%%%%% Blackboard bolds
%%%%%%%%%%%%%%%%%%%%%%%%%%%%%%%%%%%%%%%%%%%%%%%%%%%%%%%%%%%%%%%%%%%%%%%%%%%%%%

\newcommand{\linf}[1]{\underset{#1\to\infty}{\underline{\lim}}}

%%%%%%%%%%%%%%%%%%%%%%%%%%%%%%%%%%%%%%%%%%%%%%%%%%%%%%%%%%%%%%%%%%%%%%%%%%%%%%
%%%%%%%%%%%% Greek letters
%%%%%%%%%%%%%%%%%%%%%%%%%%%%%%%%%%%%%%%%%%%%%%%%%%%%%%%%%%%%%%%%%%%%%%%%%%%%%%

            % \gg already exists...

       % \ge already exists...

 \newcommand{\lsup}[1]{\underset{#1\to\infty}{\overline{\lim}}}

\title{Large Deviations of Mean-Field Jump-Markov Processes on Structured Sparse Random Graphs}
\author{  J. MacLaurin }
 
\affiliation{organization = {New Jersey Institute of Technology},
addressline = {Martin Luther King Boulevard },
city = {Newark},
country = {USA},
ead = { james.maclaurin@njit.edu}
}

\begin{document}
 \begin{abstract} We prove a Large Deviation Principle for {\color{blue} jump-Markov } Processes on sparse large disordered network with disordered connectivity. The network is embedded in a geometric space, with the probability of a  connection a (scaled) function of the spatial positions of the nodes. This type of model has numerous applications, including neuroscience, epidemiology and social networks. We prove that the rate function (that indicates the asymptotic likelihood of state transitions) is the same as for a network with all-to-all connectivity. We apply our results to a stochastic $SIS$ epidemiological model on a disordered networks, and determine Euler-Lagrange equations that dictate the most likely transition path between different states of the network.
\end{abstract}
\maketitle

% REQUIRED

% REQUIRED
%\begin{MSCcodes}
%68Q25, 68R10, 68U05
%\end{MSCcodes}
%\newcommand{\slugmaster}{%
%\slugger{siads}{xxxx}{xx}{x}{x--x}}

\renewcommand{\thefootnote}{\fnsymbol{footnote}}

\section{Introduction}

We study the dynamics of high-dimensional {\color{blue} jump-Markov } processes on networks {\color{blue} embedded in a geometric space}. {\color{blue} This class of model enjoys many applications, including in neuroscience \cite{DeMasi2014,Fournier2016,Locherbach2018,Chevallier2020,Avitabile2024,MacLaurin2026}, epidemics on structured populations \cite{Pellis2015,Riley2015,Allen2017}, sociological models \cite{Xing2024, lanchier2024stochastic} and models of opinion dynamics and belief propagation in structured networks \cite{lanchier2024stochastic}.} %Applications include high-dimensional spiking neuron models mathematical finance \cite{Hawkes2018}, machine learning \cite{Tang2024}, biochemical reaction networks \cite{Pfaffelhuber2015} and various other population dynamics models \cite{Patterson2024}. In all of the above applications, a central aim is to understand how the combination of stochasticity and network structure shapes the resulting dynamics \cite{Ji2023,Avitabile2024}.

{\color{blue} To this end, we consider the dynamics of $N$ jump-Markov processes on an inhomogeneous network (i.e. a graph, with edges and nodes). The `agents' correspond to the nodes of the graph. The state of the $j^{th}$ agent is written as $\sigma^j(t)$, and it takes on values in a finite state-space $\Gamma$. As an example, for epidemiological applications \cite{Britton2019}, one typically takes $\Gamma = \lbrace S,I,R \rbrace$ (susceptible, infected, recovered) or just $\Gamma = \lbrace S, I \rbrace$. For neuroscience applications \cite{Gerstner2014,Chevallier2020}, one might take $\Gamma = \lbrace 0,1 \rbrace$ (i.e. spiking or non-spiking). {\color{blue} In many Poisson Point Process models, the spiking intensity is not Markovian, but depends on the entire history. To simplify the equations, we restrict ourselves to Markovian Processes. (We are already working with a complex model due to the spatial structure of the interactions, and the Large Deviation Principle requires additional work).

Broadly-speaking, our model can be thought of as a Markovian Hawkes Process. Hawkes Processes are continuous-time {\color{blue} Poisson Point Process } whose intensity function is itself stochastic \cite{Daley2003,Daley2008,Laub2021}. There has been much recent effort directed towards deriving deterministic `neural field' equations to describe the large {\color{blue} size limit } of  Hawkes Processes {\color{blue} embedded in a geometric space $\mathcal{E}$ } \cite{Delattre2016a,Chevallier2020,AgatheNerine2022,Coppini2024}, particulary in neuroscience.} {\color{blue} For this class of models, the spiking-intensity of any particular agent is a function of the average state of all the connected agents. As the system size $N$ asymptotes to infinity, the net input to any agent converges to be an integral over the geometric space $\mathcal{E}$ with respect to a connectivity kernel \cite{Chevallier2020,AgatheNerine2022,Coppini2024}.} A particular recent emphasis has been to study the formation of patterns and other coherent structures \cite{Goebel2021,Bramburger2023,Bramburger2024} in the limiting equations. 

Our paper builds on the works outlined in the previous paragraph by determining a Large Deviation Principle. The Large Deviation Principle determines an asymptotic estimate for {\color{blue} how the probability of a deviation from the limiting behavior depends on the size $N$ of the system}. It is useful for estimating large-scale transitions of the system induced by rare finite-size fluctuations of the system \cite{Bressloff2014a,Zakine2023,Grafke2024,Newby2024}. 

There exists a well-developed literature for the Large Deviations of PDEs perturbed by spatially-distributed Poisson Random Measures \cite{Budhiraja2013,Brzezniak2023,Budhiraja2019}. (Note that Hawkes Processes can be represented as a double-time integral with respect to a standard Poisson Random Measure on $([0,\infty))^2$ \cite{Lewis1978}.)  Our system differs from these papers in several respects: (i) the Poissonian noise can only occur at the locations of the nodes of the graph, (ii) there is a disordered network structure (iii) the interactions are not `local' like in a PDE, but are a mean-field function of the states of the neighboring nodes. Since the nodes are approximately uniformly distributed throughout the spatial domain, in the large size limit the Large Deviations rate function becomes an integral over $\mathcal{E} \times [0,\infty)$ of a Lagrangian function. 

On a related note, there has been much recent interest in the Large Deviations of Chemical Reaction Networks \cite{Patterson2019,Barbet2023}. These are high-dimensional jump Markovian Processes, just as in this paper, but without the network structure. Unlike this paper, in the large $N$ limiit, they do not obtain a rate function with spatial extension. Recent works include those of Dupuis, Ramanan and Wu \cite{DupuisRamananWu2016}, Pardoux and Samegni-Kepgnou \cite{Pardoux2017},  Agazzi, Eckmann and Dembo \cite{Agazzi2018},  and Patterson and Renger \cite{Patterson2019}. Patterson and Renger \cite{Patterson2019} and Agazzi, Patterson, Renger and \cite{Agazzi2022} prove the Large Deviations Principle in a general setting by also studying the convergence of the reaction fluxes (like in this paper). 

We also note that there exist works concerning the Large Deviations of neural fields perturbed by noise, see for example the work of Kuehn and Riedler \cite{Kuehn2014}. This paper differs from ours because the authors do not proceed from a microscopic network model, and they employ Brownian noise rather than Poissonian noise.
}
 
 The population is assumed to exist on a disordered static network. We make minimal assumptions on the network; in particular, we do not require that the edges are sampled from a probability distribution. Our main requirement is that the typical number of edges connected to each vertex asymptotes to $\infty$ as $N\to\infty$, and that the edge connectivity resembles that of a `graphon'. It is already known that these conditions ensure that the large $N$ limiting dynamics resembles the all-to-all connectivity case \cite{Lucon2020,AgatheNerine2022,Avitabile2024}. One way to generate the graphon structure is to sample the graph randomly from a probability distribution known as a W-random graph \cite{Borgs2005,Borgs2018}. Essentially this means that the connections are sampled independently, where the probability of a connection is a function of the locations of the afferent vertices. The formalism is flexibile enough to accommodate a wide range of models, including a power-law model (often considered a paradigm for populations with a clustered social structure), and populations with a geometric spatially-distributed (i.e. the probability of a connection correlates with the geometric distance between the people). 

Important cases covered by the $W$-random-graph  formalism include the following:
\begin{itemize}
\item \textit{Sparse Power Law Graphs}: these were originally defined in the seminal paper by Barabasi and Albert \cite{Barabasi1999}, and further developed by Bollobas \textit{et al} \cite{Bollobas2001}. In the original paper, Barabasi and Albert \cite{Barabasi1999} constructed this graph iteratively, by successively adding vertices, and then connecting them, with the probability of a connection being proportional to the existing degree of the node. It was shown by \cite{Borgs2018} that one obtains an asymptotically excellent approximation to a power law graph in the $W$-random graph formalism, as long as one chooses $\mathcal{E}=(0,1]$, $\lbrace x^j_N \rbrace_{j\in I_N}$ uniformly distributed over $\mathcal{E}$, and $\mathbb{P}\big( J^{jk} = 1 \big) = (1-\beta)^2 (x^j_N x^k_N)^{-\beta}$ for some parameters such that $0 < \beta < \gamma < 1$.
\item \textit{Inhomogeneous Erdos-Renyi Random Graphs}. There has recently been considerable interest in neuroscience and ecology for random graphs with distance-dependent connectivity \cite{Lucon2020}. In this example, one takes $\mathcal{E} = \mathbb{S}^1$ (motivation for this lies in the ring-structure of the visual cortex \cite{Bressloff2012}), $\mathcal{J}:\mathbb{S}^1 \times \mathbb{S}^1 \to [0,\infty)$ any smooth function
\item \textit{Small World Graphs} were first defined in the seminal paper of Watts and Strogatz \cite{Strogatz1998}. These Graphs are constructed by taking a ring of nearest-neighbor-connected vertices, and then randomly reassigning some edges.
\end{itemize}

\subsection{Structure of the Paper}
In Section 2, we outline our model, assumptions, and main results. In Section 3, we provide an extended example of a model of the spread of an epidemic through a structure population. In Section 4, we provide the proofs.

\subsection{Notation}

{\color{red} Let $\Xi \subseteq \Gamma \times \Gamma$ consist of all $(\alpha,\beta)$ such that (i) $\alpha\neq \beta$ and (ii) $f_{\beta}(x,\alpha,y) \neq 0$ for some $x \in \mathcal{E}$ and $y \in \mathbb{R}^{\Gamma}$.}

The $N$ particles are indexed by $I_N = \lbrace 1,2,\ldots,N-1,N \rbrace$.  If $\mathcal{E}$ is a Polish Space with metric $d_{\mathcal{E}}$, let $\mathcal{M}(\mathcal{E})$ denote the space of all Borel measures on $\mathcal{E}$. Let $\mathcal{P}(\mathcal{E}) \subseteq \mathcal{M}(\mathcal{E})$ denote the space of all measures with total mass of one. We endow both of these spaces with the topology of weak convergence: i.e. the topology generated by open sets of the form, for a continuous bounded function $g \in \mathcal{C}(\mathcal{E})$, $k\in \mathbb{R}$ and $\epsilon > 0$,
\[
\big\lbrace \mu \in \mathcal{M}(\mathcal{E}) : \big| \mathbb{E}^{\mu}[g] - k \big| < \epsilon \big\rbrace .
\]
The Wasserstein distance on $\mathcal{P}(\Gamma \times \mathcal{E})$ is defined to be
\begin{align}
d_W(\mu,\nu) = \inf\big\lbrace \mathbb{E}^{\zeta}\big[ \chi\lbrace \sigma \neq \tilde{\sigma} \rbrace + d_{\mathcal{E}}(x,\tilde{x}) \big] \big\rbrace,
\end{align}
and the infimum is over all couplings $\zeta$ of $\mu$ and $\nu$.  {\color{blue} We let $\chi$ be the indicator function. So for any event $\mathcal{A}$, if $\mathcal{A}$ holds then $\chi\lbrace \mathcal{A} \rbrace = 1$, and if it does not hold then $\chi\lbrace \mathcal{A} \rbrace = 0$.}

Write $\mathfrak{B}(\mathcal{E})$ to denote the Borel sigma algebra. Let $D\big( [0,T], \mathbb{R} \big)$ denote the Skorohod space of all cadlag functions.

Define the following metric $d_T$ on $ \mathcal{M}\big(  \mathcal{E} \times [0,T]\big)^{\Gamma \times \Gamma}$:
\begin{align}\label{eq: label bounded lipschitz metric}
d_T(\mu,\nu) = \sum_{\alpha,\beta \in \Gamma } \sup_{h} \big| \mathbb{E}^{\mu_{\alpha\mapsto \beta}}[h] - \mathbb{E}^{\nu_{\alpha\mapsto \beta}}[h] \big|,
\end{align}
where the supremum is taken over all functions $h$ that are (i) Lipschitz with Lipschitz constant less than or equal to $1$ and (ii) such that $|h| \leq 1$ \cite{Shiryaev2016}  . 

Write 
\[
\pi_T: \mathcal{M}\big(  \mathcal{E} \times [0, \infty) \big)^{\Gamma \times \Gamma} \to \mathcal{M}\big( \mathcal{E} \times [0,T]\big)^{\Xi}
\]
to be the projection of a measure onto its marginal upto time $T$. We endow $\mathcal{M}\big(  \mathcal{E} \times [0, \infty) \big)^{\Xi }$ with the cylinder topology generated by open sets of the form, for open $\mathcal{O} \subseteq \mathcal{M}\big(\mathcal{E} \times [0,T]\big)^{\Xi }$,
\begin{align}
\big\lbrace \mu \in \mathcal{M}\big( \mathcal{E} \times [0, \infty) \big)^{\Gamma \times \Gamma} : \pi_T(\mu) \in \mathcal{O} \big\rbrace .
\end{align}
We also naturally consider $d_T$ to be a semi-metric on $ \mathcal{M}\big(  \mathcal{E} \times [0,\infty) \big)^{\Gamma \times \Gamma}$, as follows
\begin{align}
d_T(\mu,\nu) := d_T\big( \pi_T \mu , \pi_T \nu \big).
\end{align}
Finally, define
\begin{align}
d(\mu,\nu)= \sum_{j=1}^{\infty} 2^{-j} d_j(\mu,\nu).
\end{align}

 \section{Model Outline and Main Result}
\subsection{Geometry of the Connectivity}
We make general assumptions on the geometry of the connectivity. These assumptions will hold in a variety of circumstances. The nodes of the graph are assigned positions on a compact smooth Riemannian Manifold $\mathcal{E}$. We let  $\mu_{Rie} \in \mathcal{P}(\mathcal{E})$ denote the (normalized) volume measure on $\mathcal{E}$. The position of particle $j \in I_N := \lbrace 1,2,\ldots, N \rbrace$ is denoted $x^j_N$ (a non-random constant that depends on $N$ too), and we write $x_N = (x^j_N)_{j\in I_N}$. Write the empirical measure of initial conditions to be
\begin{align}
\hat{\mu}^N(x_N) = N^{-1}\sum_{j \in I_N} \delta_{x^j_N} \in \mathcal{P}(\mathcal{E}).
\end{align}
\begin{hypothesis}
 It is assumed that $\hat{\mu}^N(x_N)$ converges weakly to some $\kappa \in  \mathcal{P}(\mathcal{E})$ as $N\to\infty$.
\end{hypothesis}
{\color{blue} By definition of a Riemannian manifold, there must exist a complete metric $d_{\mathcal{E}}$ that generates the topology on $\mathcal{E}$.} 

 The strength of connection from particle $j$ to particle $k$ is denoted $  J^{jk} \in \mathbb{R}$. The connectivity can be both excitatory and inhibitory; and symmetric or asymmetric. We will require that the connectivity converges to a `graphon'-type structure as $N\to\infty$ \cite{Lovasz2012,Avitabile2024}. {\color{blue} This will entail that there exists a continuous function $\mathcal{J} \in \mathcal{C}(\mathcal{E}\times\mathcal{E})$ such that the strength-of-connection $J^{jk}$ can be replaced by $\mathcal{J}(x^j_N , x^k_N)$ without a significant loss of accuracy, when $N$ is large.} {\color{blue} We do not necessarily require that the edges are sampled from a probability distribution, but in Lemma \ref{Lemma Sampled Edges} below we demonstrate that the assumptions could be satisfied by sampling the edges from a probability distribution.} These assumptions are broadly similar to those made by Lucon \cite{Lucon2020} in treating the large $N$ limiting dynamics for Brownian Motion-driven SDEs on disordered networks.
\begin{hypothesis}\label{hypothesis average convergence}
(i) We assume that there is a non-increasing sequence $(\phi_N)_{N\geq 1}$ and a constant $C_{\mathcal{J}} > 0$ such that for all $N\geq 1$,
\begin{align}
\sup_{j,k \in I_N}\big| J^{jk} \big| &\leq C_{\mathcal{J}} \\
\sup_{j\in I_N} \sum_{k\in I_N} \chi\lbrace J^{jk} \neq 0 \rbrace &\leq C_{\mathcal{J}} N\phi_N . 
\end{align}
(ii) It is assumed that 
\begin{align} \label{eq: big connectivity assumption}
\lim_{N\to\infty} N^{-1} \sum_{j \in I_N}  \eta^j_N = 0.
\end{align}
where
\begin{align}
\eta^j_N =    \sup_{\alpha \in \lbrace -1,0,1 \rbrace^N}\bigg| \sum_{k\in I_N} \big( \phi_N^{-1} J^{jk} - \mathcal{J}(x^j_N,x^k_N) \big) \alpha^k \bigg|.
\end{align}
(iii) Finally, we assume that $\mathcal{J}$ is uniformly Lipschitz in both arguments, i.e. for all $x,y,z \in \mathcal{E}$
\begin{align}
\big| \mathcal{J}(x,y) - \mathcal{J}(x,z)\big| &\leq C_{\mathcal{J}} d_{\mathcal{E}}(y,z) \\
\big| \mathcal{J}(y,x) - \mathcal{J}(z,x)\big| &\leq C_{\mathcal{J}} d_{\mathcal{E}}(y,z) .
\end{align}
\end{hypothesis}
We next note that Assumption 2 is guaranteed to be satisfied if the edges are sampled independently from a distribution whose probability varies continuously over $\mathcal{E}$.
%In the following Lemma we prove that \eqref{eq: big connectivity assumption} is satisfied for asymmetric connectivity (a well-known application for asymmetric connectivity is neuroscience).
\begin{lemma}\label{Lemma Sampled Edges}
{\color{blue} Suppose that $\lbrace J^{jk} \rbrace_{j,k \in I_N}$ assume values in $\lbrace -1, 0 , 1 \rbrace$ and that they have been sampled randomly from a distribution. Assume furthermore that they are either (i) mutually independent or (ii) independent, except that $J^{jk} = J^{kj}$. Suppose also that there are Lipschitz functions $p_+ , p_- : \mathcal{E} \times \mathcal{E} \to \mathbb{R}$ such that
 \begin{align}
 \mathbb{P}\big( J^{jk} = 1 \big) &= \phi_N p_+(x^j_N, x^k_N) \\
  \mathbb{P}\big( J^{jk} = -1 \big) &= \phi_N p_-(x^j_N, x^k_N) .
 \end{align}}
Suppose that the scaling is such that for any positive constant $c >0$,
 \begin{align}
 \lim_{N\to\infty} N \exp\big( -c N\phi_N \big) < \infty.
 \end{align}
 Then Assumption \ref{hypothesis average convergence} is satisfied, as long as we define
 \begin{equation}
  \mathcal{J}(\theta,\alpha) = p_+(\theta,\alpha) - p_-(\theta,\alpha).
\end{equation}
\end{lemma}
A proof is provided in \cite{Avitabile2024}. See also \cite{Lucon2020} for a similar model.

%In Section we outline some models that satisfy these assumptions. For example, if the connections are sampled independently  and $\phi_N = N^{-a}$ for $a \geq 0$ then  \eqref{eq: big connectivity assumption} is satisfied.

%The scaling factor $\phi_N > 0$ must be such that
 %\begin{align}
%\lim_{N\to\infty} \phi_N \to \phi_* \text{ and }N \phi_N \to \infty
% \end{align}
%A proof is provided in \cite{Avitabile2024}.

%It is also assumed that for any continuous function $g \in \mathcal{C}(\mathcal{E} )$,
%\begin{align}
%\lim_{N\to\infty}N^{-1} \bigg| \sum_{j,k=1}^N\big( \phi_N^{-1} J^{jk} - \mathcal{J}(x_j,x_k) \big)g(\theta^k_N)  \bigg| = 0 .
%\end{align}
%\end{lemma}
%One way to ensure that these conditions are satisfied is to sample $J^{jk} \in \lbrace 0,1 \rbrace$ in such a way that $J^{jk} $ is independent of $J^{ab}$ if $a\neq j$ or $k \neq b$ \cite{Krivelevich2003}.

\subsection{Stochastic Transitions}
The transitions of the states are taken to be Poissonian and to assume the following mean-field form. For each $\alpha \in \Gamma$, it is assumed that there exists a function
\begin{align}
f_{\alpha}: \mathcal{E} \times \Gamma \times \mathbb{R}^{\Gamma}  \to [0,\infty),
\end{align}
such that for $\alpha \neq \sigma^j(t)$ and $h \ll 1$,
\begin{align}
\mathbb{P}\big( \sigma^j(t+h) = \alpha \; | \; \mathcal{F}_t \big) = h f_{\alpha}\big(x^j_N, \sigma^j(t) , w^j(t) \big) + O(h^2).
\end{align}
Here $w^j(t) = \big( w^j_{\beta}(t) \big)_{\beta \in \Gamma}$ and $w^j_{\beta}(t) \in [0,\infty)$ is such that
\begin{align}
w^j_{\beta}(t)  = N^{-1} \phi_N^{-1} \sum_{k=1}^N J^{jk}\chi\lbrace \sigma^k(t) = \beta \rbrace .
\end{align}
%For all $\alpha \in \Gamma$, we write
%\begin{align}
% f_{(\alpha)}\big( \cdot , \alpha , \cdot \big) = 0.
%\end{align}
Define the empirical occupation measure at time $t$, $\hat{\nu}^N_{t} \in \mathcal{P}(\Gamma \times \mathcal{E} )$ to be such that for measurable $A \subseteq \mathcal{E}$,% and write $\hat{\nu}^N_t  =  \lbrace \hat{\nu}^N_{\alpha,t} \rbrace_{\alpha\in \Gamma}$, where
\begin{align}
\hat{\nu}^N_{t}\big(\alpha \times A  \big) = N^{-1} \sum_{j\in I_N} \chi\lbrace x^j_N \in A , \sigma^j(t) = \alpha \rbrace .
\end{align}
The only assumption on the initial conditions $\lbrace \sigma^j(0) \rbrace_{j\in I_N}$ is that $\hat{\nu}^N_{0}$ converges weakly to a limit $\nu_0$, as noted in the following hypothesis.
\begin{hypothesis} \label{Lemma nu 0}
We assume that there is a measure  $\nu_0 \in \mathcal{P}(\Gamma \times \mathcal{E})$ such that %for any $\epsilon > 0$,
\begin{align}
\lim_{N\to\infty} d_W\big(\hat{\nu}^N_0 , \nu_0 \big) = 0. %\lsup{N} N^{-1} \log \mathbb{P}\big( d_W\big(\hat{\nu}^N_0 , \nu_0 \big) \geq \epsilon \big) = - \infty.
\end{align}
$\nu_0$ is such that for any measurable subset $A \subseteq \mathcal{E}$ and $\zeta \in \Gamma$,
\begin{equation}
\nu_0(\zeta \times A ) = \int_{A} q_{0}(\zeta,x) \kappa(dx),
\end{equation}
where $x \mapsto q_{0}(\zeta,x)$ is continuous and for all $x \in \mathcal{E}$, $\sum_{\zeta\in\Gamma} q_0(\zeta,x) = 1$.
\end{hypothesis}
It must be emphasized that the initial conditions can be dependent on the specific choice of the connectivity. It is well-known that a Large Deviation Principle for Poisson Random Measures may not be possible if the intensity function hits zero \cite{Patterson2019,Agazzi2022}. Hence we make the following assumptions.
\begin{hypothesis}
Let $\Xi \subseteq \Gamma \times \Gamma$ consist of all $(\alpha,\beta)$ such that (i) $\alpha\neq \beta$ and (ii) $f_{\beta}(x,\alpha,y) \neq 0$ for some $x \in \mathcal{E}$ and $y \in \mathbb{R}^{\Gamma}$. It is assumed that $f_{\alpha}$ has strictly positive upper and lower bounds, i.e. there are constants $c_f,C_f > 0$ such that for all $(\beta,\alpha)\in\Xi$,%as long as $\alpha\neq \beta$,	
\begin{align}
0 < c_f \leq f_{\alpha}(\cdot,\beta,\cdot) \leq C_f.
\end{align}
Its also assumed that $f_{\alpha}$ is globally Lipschitz, i.e. for all $(\beta,\alpha) \in \Xi$ and all $w,\tilde{w} \in \mathbb{R}^{\Gamma}$,
\begin{align}
\big| f_{\alpha}(\theta ,\beta,w) - f_{\alpha}(\tilde{\theta} ,\beta,\tilde{w}) \big| \leq C_f \big\lbrace d_{\mathcal{E}}(\theta,\tilde{\theta}) + \| w - \tilde{w} \| \big\rbrace .
\end{align}
\end{hypothesis}
%and the edge dynamics is assumed to satisfy a ODE of the form
%\begin{align}
%\frac{dw_t^{jk}(\alpha)}{dt} =& - \tau_{\alpha}^{-1} w_t^{jk}(\alpha) + g_{\alpha}\big(x^j_N ,  \sigma^j(t) , \theta^k_N ,  \sigma^k(t) \big) \\
%w^{jk}_0(\alpha) =& J^{jk}.
%\end{align}
%Typically one desires that $L \in \mathbb{R}^{d\times d}$ is positive definite (to stabilize the edge dynamics). The function
%\begin{align}
%g: \mathcal{E} \times \Gamma \times \mathcal{E} \times \mathcal{E} \to \mathbb{R}^d
%\end{align}
%is assumed to be Lipschitz. It is assumed that self-transitions are not allowed, i.e.
%\begin{align}
%f_{(\alpha)}(\cdot , \alpha , \cdot) = 0.
%\end{align}

%The other transitions are taken to be at constant rates:
%\begin{align}
%\mathbb{P}\big( \sigma^j(t+\Delta) = R \; | \mathcal{F}_t , \sigma^k(t) = I \big) =& \Delta C_R+ O(\Delta^2) \\
%\mathbb{P}\big( \sigma^j(t+\Delta) = S \; | \mathcal{F}_t , \sigma^k(t) = R \big) =& \Delta C_S+ O(\Delta^2) .
%\end{align}
%We let $\mathcal{M}(\mathcal{E})$ denote the set of all measures on $\mathcal{E}$ (the total mass may not be $1$). We next define the variables whose convergence / Large Deviations we will study in the limit as $N\to\infty$. 
A key object used to study the large $N$ behavior of the system is the  empirical reaction flux \cite{Patterson2019}. For $(\alpha,\beta) \in \Xi$, the empirical reaction flux $\hat{\mu}^N_{\alpha \mapsto \beta } \in \mathcal{M}( \mathcal{E}   \times [0,\infty) )$ is defined to count the total number of $\alpha \mapsto \beta $ transitions over a specified time interval (and scaled by $N^{-1}$), i.e.  for a measurable subset $A \subset \mathcal{E}$ and $[a,b] \subset [0,\infty) $, 
\begin{align} \label{eq: transition empirical measure}
\hat{\mu}^N_{\alpha \mapsto \beta}\big( A  \times [a,b] \big) = N^{-1} \sum_{j\in I_N}\sum_{s  \in [a,b]} \chi\big\lbrace   x^j_N \in A ,  \sigma^j_{s^-} = \alpha , \sigma^j_{s} = \beta \big\rbrace .
\end{align}
and write $\hat{\mu}^N := \big( \hat{\mu}^N_{\alpha\mapsto\beta} \big)_{(\alpha,\beta)\in \Xi}$. When necessary we write $\hat{\mu}^N_T \in \mathcal{M}(\mathcal{E} \times [0,T])^{\Xi}$ to denote the measures upto finite times. 
%Finally we denote the empirical occupation measure 
%\begin{align}
%\hat{\nu}^N &:= \big( \hat{\nu}^N_t \big)_{t\geq 0} \in  \mathcal{D}\big( [0,\infty) ,  \mathcal{P}(\Gamma \times \mathcal{E}) \big) \\
%\hat{\nu}^N_t(\alpha \times A) & := N^{-1}\sum_{j\in I_N}\chi\big\lbrace \sigma^j(t) = \alpha , x^j_N \in A \big\rbrace .
%\end{align}
The joint state space for the empirical reaction flux, and the empirical measure is denoted by 
\begin{align}
\mathcal{X} = \mathcal{M}\big(\mathcal{E} \times [0,\infty) \big)^{\Xi} \times \mathcal{D}\big( [0,\infty),  \mathcal{P}(\Gamma \times \mathcal{E}) \big).
\end{align}
%Write $W^N_t : \mathfrak{B}(\mathcal{E} \times \Gamma \times \mathcal{E} \times \Gamma) \to \mathbb{R}^{d\times d}$ to be such that %\in \mathcal{N}(\mathcal{E} \times \mathcal{E} )$ to be such that for any measurable $A ,B\subseteq \mathcal{E}$,
%\begin{align}
%W^N_t(A \times \alpha \times B \times \beta) = N^{-2}\sum_{j,k\in I_N} w^{jk}(t) \chi\lbrace x^j_N \in A , \sigma^j(t) = \alpha ,  \theta^k_N \in B, \sigma^k(t) = \beta  \rbrace .
%\end{align
%\hat{\mu}^N_{S,t}(A) &= N^{-1} \sum_{j\in I_N} \chi\big\lbrace x^j_N \in A , \sigma^j_t = S \big\rbrace \\
%\hat{\mu}^N_{I,t}(A) &= N^{-1} \sum_{j\in I_N} \chi\big\lbrace x^j_N \in A , \sigma^j_t = I \big\rbrace \\
%\hat{\mu}^N_{R,t}(A) &= N^{-1} \sum_{j\in I_N} \chi\big\lbrace x^j_N \in A , \sigma^j_t = R \big\rbrace .
%It is assumed that the proportion of people at position $\theta$ converges to some limit. That is, there exists $p\in \mathcal{C}(\mathcal{E},[0,\infty))$ such that, writing $B_{\epsilon}(\theta)$ to be the $\epsilon$ ball about $\theta \in \mathcal{E}$,
%\begin{align}
%\lim_{\epsilon \to 0^+} \big| B_{\epsilon}(\theta) \big|^{-1}\big( \hat{\mu}^N_{S,t}\big( B_{\epsilon}(\theta) \big)+ \hat{\mu}^N_{S,t}\big( B_{\epsilon}(\theta) \big) + \hat{\mu}^N_{S,t}\big( B_{\epsilon}(\theta) \big)\big) = p(\theta).
%\end{align}
\subsection{Main Results}
We first prove that the empirical reaction flux and the empirical occupation measure both concentrate in the large $N$ limit. (This is basically already known \cite{AgatheNerine2022,Coppini2024}).
\begin{theorem}\label{Theorem Large N limiting dynamics}
There exists unique $( \mu , \nu) \in \mathcal{X}$ to which the system converges as $N \to\infty$. Write the density of $\nu_t$ to be $q_{t}(\alpha,\theta)$, i.e.  for any $A \in \mathfrak{B}(\mathcal{E})$,
\begin{align}
\nu_t(\alpha \times A) :=& \int_A q_{t}(\alpha,\theta) d\kappa(\theta) .
\end{align}
The evolution of the density is such that for any $\alpha\in\Gamma$ and any $\theta \in \mathcal{E}$,
\begin{multline}
\frac{d q_{t} (\alpha,\theta)}{dt} =\sum_{\beta \in \Gamma: (\beta,\alpha) \in \Xi}  f_{\alpha}(\theta , \beta , w_t(\theta)) q_{t} (\beta,\theta) \\ - \sum_{\beta \in \Gamma: (\alpha,\beta) \in \Xi} f_{\beta}(\theta , \alpha , w_t(\theta))  q_{t} (\alpha,\theta)  , 
\end{multline}
where
\begin{align}
w_t(\theta) :=& \big( w_{t,\alpha}(\theta) \big)_{\alpha \in \Gamma} \; \;  \text{ and }\\ 
w_{t,\alpha}(\theta) =&  \int_{\mathcal{E}} \mathcal{J}(\theta, y ) q_{t}(\alpha,y) \kappa(dy).
\end{align}
Furthermore the limits of the empirical reaction fluxes are such that for $(\alpha,\beta) \in \Xi$,
\begin{align}
\frac{d \mu_{\alpha\mapsto \beta}}{dt}( A \times [s,t]) =& \int_A  f_{\beta}(\theta , \alpha , w_t(\theta)) q_{t}(  \alpha,\theta) \kappa(d\theta) \quad \quad t\geq s  \\
\mu_{\alpha\mapsto \beta}(A \times \lbrace s \rbrace) =& 0.
\end{align}%\mu_{\alpha\mapsto \beta}(\theta,t) =& \nu_{t,\alpha}(\theta) d\ell(\theta)
 %Furthermore, for each $\alpha\in \Gamma$, $(\theta,t) \to \kappa_T(\alpha,\theta)$ is uniformly Lipshitz over finite time intervals.
 \end{theorem}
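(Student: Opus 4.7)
The plan is to follow the standard compactness/identification/uniqueness strategy. First I would establish well-posedness of the limiting system; then extract weak subsequential limits of $(\hat{\mu}^N, \hat{\nu}^N)$ in $\mathcal{X}$; and finally identify every such limit with the unique solution via a semimartingale decomposition, at which point uniqueness promotes subsequential convergence to convergence of the full sequence.

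For well-posedness of the limiting equation, I would pose it as a fixed point problem on $C([0,T], L^{\infty}(\mathcal{E})^{\Gamma})$: given a candidate trajectory $\nu$, the associated field $w_t(\theta) = \int_{\mathcal{E}} \mathcal{J}(\theta,\zeta)\nu_t(\cdot,\zeta) d\mu_{Rie}$ is Lipschitz in $\nu$ uniformly in $\theta$, so composition with the Lipschitz rates $f_{(\alpha)}$ makes the induced Duhamel map a contraction on a short enough time interval. Boundedness of the rates by $C_f$ yields a priori bounds and allows bootstrapping to arbitrary horizons, and the mass-preserving balance structure of the right-hand side keeps $\nu_t$ a probability density on $\Gamma\times \mathcal{E}$ for each $t$, with density (w.r.t. $\mu_{Rie}$) inherited from $\rho$ at $t=0$ and propagated by the linear evolution.

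For the probabilistic side, the key tool is the Doob-Meyer decomposition of the reaction flux,
\begin{align*}
\hat{\mu}^N_{\alpha\mapsto\beta}(A\times[0,t]) = M^N_{\alpha\mapsto\beta}(A\times[0,t]) + \int_0^t N^{-1}\sum_{j\in I_N} \chi\{x^j_N\in A,\, \sigma^j(s)=\alpha\}\, f_{(\beta)}(x^j_N,\alpha,w^j(s))\,ds,
\end{align*}
coupled with the balance identity $\hat{\nu}^N_t(\alpha\times A) = \hat{\nu}^N_0(\alpha\times A) + \sum_{\beta\neq\alpha} \{ \hat{\mu}^N_{\beta\mapsto\alpha}(A\times[0,t]) - \hat{\mu}^N_{\alpha\mapsto\beta}(A\times[0,t]) \}$. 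Since $f_{(\alpha)}\leq C_f$, the compensator is bounded by $C_f t$, so the $\hat{\mu}^N_{\alpha\mapsto\beta}$ lie in a weakly relatively compact ball of $\mathcal{M}(\mathcal{E}\times[0,T])$ for every $T$ (using compactness of $\mathcal{E}\times[0,T]$), which yields compactness in the cylinder topology on $\mathcal{X}$. The martingale $M^N_{\alpha\mapsto\beta}$ has predictable quadratic variation of order $N^{-1}$ against bounded test functions (each of $O(N)$ jumps contributes $O(N^{-2})$), so $M^N\to 0$ in $L^2$, and an Aldous-Rebolledo argument then gives tightness of $\hat{\nu}^N$ in the Skorohod space.

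The identification step is the main obstacle. Along a weakly convergent subsequence $(\hat{\mu}^N,\hat{\nu}^N)\to(\bar{\mu},\bar{\nu})$, I must pass the compensator to $\int_0^t \int_A f_{(\beta)}(\theta,\alpha,\bar{w}_s(\theta)) \bar{\nu}_s(\alpha,\theta)\,d\mu_{Rie}(\theta)\,ds$. The essential difficulty is to replace the discrete coupling $w^j_\gamma(s) = N^{-1}\phi_N^{-1}\sum_k J^{jk}\chi\{\sigma^k(s)=\gamma\}$ by the continuum convolution $\int_{\mathcal{E}}\mathcal{J}(x^j_N,\zeta)\bar{\nu}_s(\gamma,\zeta)\,d\mu_{Rie}(\zeta)$. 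Hypothesis \ref{hypothesis average convergence} is tailored for this: applying it with $\alpha^k=\chi\{\sigma^k(s)=\gamma\}\in\{0,1\}$ controls on average in $j$ the discrepancy between $w^j_\gamma(s)$ and the Riemann-type sum $N^{-1}\sum_k \mathcal{J}(x^j_N,x^k_N)\chi\{\sigma^k(s)=\gamma\}$, and the weak convergence $\hat{\nu}^N_s\to \bar{\nu}_s$ together with the Lipschitz continuity of $\mathcal{J}$ closes the gap to the continuum integral. The global Lipschitz continuity of $f_{(\alpha)}$ then propagates this replacement through the compensator, the balance identity identifies the candidate limit as a solution of the target ODE, and uniqueness from the first step pins down $(\bar\mu,\bar\nu)$ and upgrades subsequential convergence to convergence of the full sequence.
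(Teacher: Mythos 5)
Your argument is sound, but it is not the route the paper takes: the paper gives no direct proof of Theorem \ref{Theorem Large N limiting dynamics} at all. It flags the law of large numbers as ``basically already known'' (citing Agathe-N\'erine and Coppini \emph{et al.}), and within the paper's own logic the statement is a by-product of Theorem \ref{Main Large Deviations Theorem}: the rate function $\mathcal{G}$ is lower semicontinuous with compact level sets and vanishes precisely at the $(\mu,\nu)$ satisfying the stated equations (i.e.\ $p_{\alpha\mapsto\beta}=\lambda_{(\alpha,\beta)}$), so the LDP upper bound applied to the closed complement of any neighbourhood of that point yields exponential concentration, hence convergence. Your direct semimartingale proof --- Doob--Meyer decomposition of the fluxes, $O(N^{-1})$ predictable quadratic variation of the martingale part, tightness, identification of limit points via Hypothesis \ref{hypothesis average convergence} applied with $\alpha^k=\chi\lbrace\sigma^k(s)=\gamma\rbrace$, and a fixed-point uniqueness argument --- is the standard propagation-of-chaos route and is essentially what the cited references carry out; it buys a self-contained proof under weaker hypotheses (in particular the lower bound $c_f>0$ on the rates, indispensable for the LDP, plays no role), whereas deriving the LLN from the LDP buys exponential concentration rates for free. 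Two details you should make explicit if you write this out: tightness of $\hat{\mu}^N$ is tightness of \emph{laws}, since the total mass $\hat{\mu}^N_{\alpha\mapsto\beta}(\mathcal{E}\times[0,T])$ is random and controlled only in expectation (by $C_f T$, via Markov's inequality), not deterministically; and the absolute continuity of the limit $\nu_t$ with respect to $\mu_{Rie}$ --- needed even to state the limiting ODE for the density --- must be extracted from the domination of the compensators by $C_f$ times $N^{-1}\sum_{j}\delta_{x^j_N}\otimes ds$ together with the standing assumption that $\kappa$ has a smooth density $\rho$.
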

We next state the Large Deviation Principle for the empirical reaction fluxes. We first define the rate function:
\begin{align}
\mathcal{G}: \mathcal{M}\big(  \mathcal{E} \times [0,\infty)\big)^{\Xi} \to \mathbb{R},
\end{align}
as follows. For $\mu \in \mathcal{M}\big(  \mathcal{E} \times [0,\infty) \big)^{\Xi}$, we stipulate that
\begin{align}
\mathcal{G}(\mu) = \infty
\end{align}
in the case that for some $(\alpha,\beta) \in \Xi$ with $\alpha\neq \beta$, $\mu_{\alpha \mapsto \beta}$ does not have a density (with respect to $\kappa \otimes \mu_{Leb}$). {\color{red} Here $\mu_{Leb}$ is Lebesgue measure on $[0,\infty)$}.  Otherwise, writing $p_{\alpha \mapsto \beta}(x,t)$ to be the density, i.e. the function such that
\begin{align}
\frac{d\mu_{\alpha \mapsto \beta}}{d \kappa \otimes d\mu_{Leb}}(x,t) = p_{\alpha \mapsto \beta}(x,t),
\end{align}
we define
\begin{align}
\mathcal{G}(\mu) =  \sum_{ (\alpha , \beta) \in \Xi} \int_{\mathcal{E}}    \int_0^\infty    \ell\bigg( \frac{p_{\alpha \mapsto \beta}(x,t) }{ \lambda_{\alpha\mapsto\beta}(x,t)} \bigg) \lambda_{\alpha\mapsto\beta}(x,t)   dt  \kappa(dx). \label{eq: G mu rate function}
\end{align}
Here, {\color{red} the convex function $\ell : \mathbb{R} \mapsto \mathbb{R}$ comes from the rate function for the homogeneous Poisson Process. Notice that the rate function can be written as a time-rescaling of the rate function for homogeneous Poisson Processes. The functions are}
\begin{align}
\ell(a) =& a\log a - a + 1 \\
\lambda_{\alpha\mapsto\beta}(x,t) =& f_{\beta}(x,\alpha, w (x,t) ) q_t(\alpha,x) \label{eq: lambda alpha beta x t} \\
q_t(\alpha,x) =& q_0(\alpha,x) + \int_0^t \bigg( \sum_{\beta: (\beta,\alpha) \in \Xi} p_{\beta \mapsto \alpha}(x,s) - \sum_{\beta: (\alpha,\beta) \in \Xi} p_{\alpha \mapsto \beta}(x,s) \bigg) ds \nonumber\\
w(x,t) =& \big(w_{\zeta}(x,t) \big)_{\zeta\in\Gamma} \\
w_{\zeta}(x,t) =&\int_{\mathcal{E}}   \mathcal{J}(x,y)q_t(\zeta,y) \kappa(dy).
\end{align}
For future reference, we briefly note the rate function restricted to finite time intervals. This is $\mathcal{G}_T: \mathcal{M}\big( \mathcal{E} \times [0,T] \big)^{\Xi} \mapsto \mathbb{R}$, which is defined to be such that
\begin{align}
\mathcal{G}_T(\mu) =  \sum_{ (\alpha , \beta) \in \Xi} \int_{\mathcal{E}}    \int_0^T    \ell\bigg( \frac{p_{\alpha \mapsto \beta}(x,t) }{\lambda_{\alpha\mapsto\beta}(x,t)} \bigg) \lambda_{\alpha\mapsto\beta}(x,t)   dt  \kappa(dx). \label{eq: G T mu definition}
\end{align}
% \mathbb{E}^{(\alpha,\iota)\sim \nu_0}\big[ \chi\lbrace \zeta  = \alpha\rbrace \mathcal{J}(x,\iota) \big] \nonumber \\ &+ \sum_{\alpha\in\Gamma}\int_{\mathcal{E}} \int_0^t \mathcal{J}(x,y) \big\lbrace p_{\alpha\mapsto\zeta}(y,s) - p_{\zeta\mapsto\alpha}(y,s) \big\rbrace ds \kappa(dy) \\
The main theoretical result of this paper is the following theorem.
\begin{theorem} \label{Main Large Deviations Theorem}
Let $\mathcal{A}, \mathcal{O} \subseteq \mathcal{M}\big( \mathcal{E} \times [0,\infty) \big)^{\Xi}$ be (respectively) closed and open. Then
\begin{align}
\lsup{N} N^{-1}\log \mathbb{P}\big( \hat{\mu}^N \in \mathcal{A} \big) &\leq - \inf_{\mu \in \mathcal{A}} \mathcal{G}(\mu) \\
\linf{N} N^{-1}\log \mathbb{P}\big( \hat{\mu}^N \in \mathcal{O} \big) &\geq - \inf_{\mu \in \mathcal{O}} \mathcal{G}(\mu) .
\end{align}
Furthermore, $\mathcal{G}$ is lower semicontinuous and has compact level sets.
\end{theorem}

\subsection{Contracted Rate Function}

In this section, we determine the structure of the Large Deviation rate function for the empirical occupation measure $\hat{\nu}^N$ only.  One easily checks that the empirical occupation measures can be obtained by applying a continuous transformation to the empirical reaction fluxes. This is noted in the following Lemma.
\begin{lemma} \label{Lemma Psi Definition}
For $t > 0$ and $\nu_0 \in  \mathcal{P}(\Gamma \times \mathcal{E})$, define $\Psi_{\nu_0,t}:  \mathcal{M}\big( \mathcal{E} \times [0,\infty) \big)^{\Xi} \to \mathcal{P}(\Gamma \times \mathcal{E})$ to be such that for any $A \in \mathfrak{B}(\mathcal{E})$, and any $\alpha \in \Gamma$,
\begin{align*}
\nu_t(\alpha \times A) = \nu_0(\alpha \times A) +  \sum_{\beta : (\beta,\alpha) \in \Xi }   \mu_{\beta \mapsto \alpha}(A \times [0,t]) -  \sum_{\beta : (\alpha,\beta) \in \Xi } \mu_{\alpha\mapsto \beta}(A \times [0,t])  .
\end{align*}
For each $t > 0$ and $\nu_0 \in  \mathcal{P}(\Gamma \times \mathcal{E})$, $\Psi_{\nu_0,t}$ is continuous. Also $\nu_0 \mapsto \Psi_{\nu_0,t}(\mu)$ is continuous for any $\mu \in  \mathcal{M}\big( \mathcal{E} \times [0,\infty) \big)^{\Xi}$. Furthermore with unit probability, for all $t > 0$,
\begin{align}
\Psi_{\hat{\nu}^N_0 ,t}\big( \hat{\mu}^N\big) &= \hat{\nu}_t^N .
\end{align}
\end{lemma}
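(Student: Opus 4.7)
The lemma asserts three things: an a.s.\ identity at the empirical level, continuity in $\nu_0$, and continuity in $\mu$. I would handle the two easier claims first. The empirical identity is pure bookkeeping: for each particle $j$, the difference $\chi\{\sigma^j(t)=\alpha\} - \chi\{\sigma^j(0)=\alpha\}$ equals the net number of transitions $\beta \mapsto \alpha$ minus $\alpha \mapsto \beta$ made by $j$ during $[0,t]$; multiplying by $\chi\{x^j_N \in A\}$, summing over $j$, and dividing by $N$ yields exactly the relation defining $\Psi$. Continuity in $\nu_0$ is equally direct: the map is affine in $\nu_0$, and since $\mathcal{E}$ is compact and $\Gamma$ is finite, weak convergence $\nu_0^n \to \nu_0$ transfers directly to weak (hence Wasserstein) convergence of $\Psi_{\nu_0^n, t}(\mu)$ to $\Psi_{\nu_0, t}(\mu)$.

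The substantive work is continuity in $\mu$. Testing against a bounded Lipschitz $g : \Gamma \times \mathcal{E} \to \mathbb{R}$,
\[
\int g \, d\Psi_{\nu_0,t}(\mu) = \int g \, d\nu_0 + \sum_{\alpha \neq \beta} \int_{\mathcal{E} \times [0,t]} g(\alpha, x) \, d(\mu_{\beta\mapsto\alpha} - \mu_{\alpha\mapsto\beta})(x,s).
\]
Continuity reduces to continuity of each functional $\mu_{\gamma\mapsto\delta} \mapsto \int_{\mathcal{E}\times[0,t]} g(\alpha, x) \, d\mu_{\gamma\mapsto\delta}$ in the topology generated by the metrics $d_T$. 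Since $\chi_{[0,t]}(s)$ is not Lipschitz in $s$, I would sandwich it between two continuous piecewise-linear cutoffs $h_\delta^- \leq \chi_{[0,t]} \leq h_\delta^+$ that agree with $\chi_{[0,t]}$ outside a window of width $\delta$ around $t$. The products $g(\alpha,x) h_\delta^\pm(s)$ are bounded Lipschitz on $\mathcal{E}\times [0,t+\delta]$, so $d_{t+\delta}$-convergence $\mu^n \to \mu$ immediately gives convergence of both sandwich integrals as $n \to \infty$. Sending $\delta \to 0$ then squeezes the target integral from both sides, using that each $\mu_{\gamma\mapsto\delta}$ is finite on compact windows.

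The main obstacle is precisely this last limit: the sandwich closes only at $\mu$ with $\mu_{\gamma\mapsto\delta}(\mathcal{E}\times\{t\}) = 0$. Since the empirical reaction fluxes are purely atomic in time, such boundary atoms are a genuine possibility for individual $N$. The cleanest resolution is to observe that the continuity statement is needed only on the good set where $\mathcal{G}(\mu) < \infty$; there, each $\mu_{\gamma\mapsto\delta}$ admits a density with respect to $\mu_{Rie}\otimes\mu_{Leb}$, which rules out boundary atoms and lets the sandwich argument close. This is exactly the regularity required by the contraction-principle step used later in the paper.
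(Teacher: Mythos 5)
The paper offers no proof of this lemma (it is declared to "follow almost immediately from the definitions"), so your proposal is supplying the missing argument rather than paralleling one. What you write is correct and, importantly, you have put your finger on the one point where the lemma as literally stated is false: $\mu \mapsto \mu_{\alpha\mapsto\beta}(A\times[0,t])$ is \emph{not} weakly continuous at a $\mu$ carrying an atom on the time-slice $\mathcal{E}\times\{t\}$ (take $\mu^n_{\alpha\mapsto\beta}=\delta_{(x_0,t+1/n)}\to\delta_{(x_0,t)}$), and the empirical fluxes are purely atomic in time, so this is not a vacuous worry. Your resolution --- that continuity is only ever invoked at measures admitting a density with respect to $\mu_{Rie}\otimes\mu_{Leb}$, i.e.\ on the effective domain of $\mathcal{G}$, where time-atoms are excluded and the sandwich closes --- is the right one and is consistent with how the lemma is used in the contraction step for Corollary \ref{Corollary Large Deviations}. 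Two small touch-ups: (i) the pointwise sandwich $\int g\,h_\delta^-\,d\mu^n\leq\int g\,\chi_{[0,t]}\,d\mu^n\leq\int g\,h_\delta^+\,d\mu^n$ requires $g\geq 0$; either split $g$ into positive and negative parts, or replace the sandwich by the estimate $\big|\int g(\chi_{[0,t]}-h_\delta^{\pm})\,d\mu^n\big|\leq\|g\|_\infty\,\mu^n(\mathcal{E}\times[t-\delta,t+\delta])$ and control the right-hand side by a Lipschitz majorant of the window, which tends to $\mu(\mathcal{E}\times\{t\})=0$ under the no-atom hypothesis; (ii) for general $\mu$ the formula defining $\Psi_{\nu_0,t}(\mu)$ need not produce a nonnegative measure of total mass one, so strictly the codomain should be signed measures (or the domain restricted), a definitional slip inherited from the paper rather than introduced by you. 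Neither point affects the substance; the bookkeeping identity for $\hat{\nu}^N_t$ and the affine continuity in $\nu_0$ are exactly right.
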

Write $\Psi_{\nu_0}(\mu) := (\Psi_{\nu_0,t}(\mu))_{t\geq 0} \in D\big( [0,\infty) , \mathcal{P}(\Gamma \times \mathcal{E}) \big)$. 
The proof of Lemma \ref{Lemma Psi Definition} follows almost immediately from the definitions and is neglected.

We can now define the contracted rate function (recalling that  $\nu_0$ is the limit of the empirical occupation measure at time $0$),

\begin{align}
\mathcal{H}&: \mathcal{D}\big( [0,\infty) , \mathcal{P}(\Gamma \times \mathcal{E}) \big) \to [0,\infty) \text{ where } \\
\mathcal{H}(\nu) &= \inf \big\lbrace \mathcal{G}(\mu) : \Psi_{\nu_0}( \mu) = \nu \text{ and }\mu \in \mathcal{M}(\mathcal{E} \times [0,\infty))^{\Xi}\big\rbrace ,
\end{align}
and we define $\mathcal{H}(\nu) = \infty$ if there does not exist any $\mu \in \mathcal{M}(\mathcal{E} \times [0,\infty))^{\Xi}$ such that $ \Psi_{\nu_0}( \mu) = \nu$. 

\begin{corollary} \label{Corollary Large Deviations}
Let $\mathcal{A}, \mathcal{O} \subseteq \mathcal{D}\big( [0,\infty),  \mathcal{P}( \Gamma \times \mathcal{E} ) \big)$ be (respectively) closed and open. Then
\begin{align}
\lsup{N} N^{-1}\log \mathbb{P}\big( \hat{\nu}^N \in \mathcal{A} \big) &\leq - \inf_{\nu \in \mathcal{A}} \mathcal{H}(\nu), \\
\linf{N} N^{-1}\log \mathbb{P}\big( \hat{\nu}^N \in \mathcal{O} \big) &\geq - \inf_{\nu \in \mathcal{O}} \mathcal{H}(\nu) .
\end{align}
Furthermore, $\mathcal{H}$ is lower semicontinuous and has compact level sets.
\end{corollary}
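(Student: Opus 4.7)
The proof is an application of the contraction principle to Theorem~\ref{Main Large Deviations Theorem}, combined with a short exponential-equivalence argument to reconcile the use of $\hat{\nu}^N_0$ in Lemma~\ref{Lemma Psi Definition} with the use of $\nu_0$ in the definition of $\mathcal{H}$. By Lemma~\ref{Lemma Psi Definition}, the map
\[
\Psi_{\nu_0}: \mathcal{M}(\mathcal{E} \times \mathbb{R}^+)^{\Gamma \times \Gamma} \to \mathcal{D}([0,\infty), \mathcal{P}(\Gamma \times \mathcal{E}))
\]
is continuous. Applying the standard contraction principle (e.g.\ Dembo--Zeitouni, Theorem~4.2.1) to Theorem~\ref{Main Large Deviations Theorem} therefore gives a Large Deviation Principle for the push-forward sequence $\Psi_{\nu_0}(\hat{\mu}^N)$ with rate function
\[
\nu \mapsto \inf\{\mathcal{G}(\mu) : \Psi_{\nu_0}(\mu) = \nu\} = \mathcal{H}(\nu).
\]
The lower semicontinuity and compactness of the level sets of $\mathcal{H}$ are automatic consequences of the contraction principle, since $\mathcal{G}$ enjoys the same properties by Theorem~\ref{Main Large Deviations Theorem} and the image of a compact set under a continuous map into a Hausdorff space is compact.

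It remains to transfer the LDP from $\Psi_{\nu_0}(\hat{\mu}^N)$ to $\hat{\nu}^N$ itself. By Lemma~\ref{Lemma Psi Definition}, with unit probability $\hat{\nu}^N_t = \Psi_{\hat{\nu}^N_0, t}(\hat{\mu}^N)$ for every $t > 0$. Inspection of the defining formula for $\Psi$ shows that $\Psi_{\nu, t}(\mu) - \Psi_{\tilde{\nu}, t}(\mu) = \nu - \tilde{\nu}$ independently of $\mu$ and $t$. Consequently
\[
\hat{\nu}^N_t - \Psi_{\nu_0, t}(\hat{\mu}^N) = \hat{\nu}^N_0 - \nu_0
\]
almost surely, uniformly in $t \geq 0$. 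Hypothesis~\ref{Lemma nu 0} ensures that the right-hand side vanishes in the Wasserstein metric $d_W$, and hence in the cylinder topology on $\mathcal{D}([0,\infty), \mathcal{P}(\Gamma \times \mathcal{E}))$. Because this discrepancy is deterministic, $\hat{\nu}^N$ and $\Psi_{\nu_0}(\hat{\mu}^N)$ are trivially exponentially equivalent. The standard exponential-equivalence lemma (Dembo--Zeitouni, Theorem~4.2.13) then yields the LDP for $\hat{\nu}^N$ with the same rate function $\mathcal{H}$.

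No substantial obstacle arises in executing this plan: the contraction step rests directly on Theorem~\ref{Main Large Deviations Theorem} and the continuity asserted in Lemma~\ref{Lemma Psi Definition}, while the exponential equivalence degenerates to a deterministic vanishing that reduces to the Wasserstein convergence of the initial empirical measure already guaranteed by Hypothesis~\ref{Lemma nu 0}. The only minor verification worth noting is that the chosen cylinder topology on $\mathcal{D}([0,\infty), \mathcal{P}(\Gamma \times \mathcal{E}))$ is sufficiently coarse that a constant-in-time shift by a signed measure whose Wasserstein norm tends to zero also tends to zero in that topology, which follows immediately from the definition of the metric $d$ assembled in the notation section.
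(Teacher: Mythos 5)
Your proof is correct and follows essentially the same route as the paper, which simply invokes the Contraction Principle applied to Theorem \ref{Main Large Deviations Theorem} via the continuity of $\Psi$ from Lemma \ref{Lemma Psi Definition}. Your additional exponential-equivalence step — observing that $\hat{\nu}^N_t - \Psi_{\nu_0,t}(\hat{\mu}^N) = \hat{\nu}^N_0 - \nu_0$ is a deterministic, time-independent shift vanishing under Hypothesis \ref{Lemma nu 0} — is a valid and worthwhile piece of bookkeeping that the paper's one-line proof leaves implicit (it is presumably why the paper records the continuity of $\nu_0 \mapsto \Psi_{\nu_0,t}(\mu)$ in Lemma \ref{Lemma Psi Definition}), but it does not constitute a different method.
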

\begin{proof}
Since $\Psi$ is continuous, this follows from an application of the Contraction Principle \cite{Dembo1998} to Theorem \ref{Main Large Deviations Theorem}.
\end{proof}
We desire a more workable definition of $\mathcal{H}$. To this end, write
\begin{align}
\mathcal{U} \subseteq \mathcal{D}\big( [0,\infty) , \mathcal{P}(\Gamma \times \mathcal{E}) \big) 
\end{align}
to consist of all $(\nu_t)_{t\geq 0}$ such that (i) $t\mapsto \nu_t$ is continuous and (ii) there exist functions $\lbrace r_{t,\eta} \rbrace_{\eta\in\Gamma} \subset L^1(\mathcal{E} \times [0,\infty) )$ (the set of all functions that are integrable with respect to $\kappa \otimes \mu_{Leb}$ upto finite times) such that for all $A \in \mathfrak{B}(\mathcal{E})$,
\begin{align}
\nu_t(\eta \times A) =&\nu_0(\eta \times A)+ \int_0^t \int_{A} r_{s,\eta}(\theta) d\kappa(\theta) d\mu_{Leb}(s) \text{ and } \label{eq: nu t decomposition} \\
\sum_{\eta\in\Gamma} \nu_t(\eta \times A) =& \kappa(A).
\end{align}
For any $r \in L^1\big(\mathcal{E} \times [0,\infty) \big)^{\Gamma}$, written $r = \big(r_{t,\zeta}(x) \big)_{x\in \mathcal{E}, t\geq 0, \zeta \in \Gamma}$, define the set
\begin{multline}
\mathcal{Q}_t(r) = \bigg\lbrace \big( q_{\alpha \mapsto\beta} \big)_ {(\alpha,\beta)\in \Xi} \in  L^1( \mathcal{E})^{\Xi} \; : \\ r_{t,\zeta} (x)= \sum_{\alpha\in\Gamma : (\alpha,\zeta) \in \Xi} q_{\alpha\mapsto\zeta}(x) - \sum_{\alpha\in\Gamma : (\zeta,\alpha) \in \Xi}  q_{\zeta\mapsto\alpha}(x)  \bigg\rbrace,
\end{multline}
and define the function $L: L^1(\mathcal{E})^{\Gamma} \times L^1(\mathcal{E})^{\Gamma} \mapsto \mathbb{R}$ to be
\begin{multline}
L_t( r,w) = \inf\bigg\lbrace \sum_{ (\alpha,\beta) \in \Xi} \int_{\mathcal{E}} \ell\big(q_{\alpha\mapsto\beta}(x) / \lambda_{\beta}(x,\alpha, w(x)) \big)  \lambda_{\beta}(x,\alpha, w(x)) \kappa(dx) \\ \text{ where } q \in \mathcal{Q}_t(r)  \bigg\rbrace .
\end{multline}
\begin{lemma}
If $ \nu  \notin \mathcal{U}$, then
\begin{align}
\mathcal{H}(\nu) = \infty.
\end{align}
Otherwise, if $\nu \in \mathcal{U}$, and letting $r_t$ satisfy \eqref{eq: nu t decomposition}, it holds that
\begin{align}
\mathcal{H}(\nu) =& \int_0^{\infty} L_t(r_t,w_t) dt \text{ where }\\
w_t(x) =& \big(w_{\zeta}(x,t) \big)_{\zeta\in\Gamma} \text{ and } \\
w_{\zeta}(x,t) =& \int_{\mathcal{E}}  \mathcal{J}(x,y) \chi\lbrace \alpha=\zeta \rbrace d\nu_t(\alpha,y) .
\end{align}
Furthermore, $\mathcal{H}$ is strictly convex in its first argument.
\end{lemma}

%We next note that the LDP holds {\color{blue} when the initial conditions are given by a stopping time }.
% \begin{corollary}
%Suppose that the dynamics is identical to the previous section, except that now the initial conditions are arbitrary constants $\sigma(0) := \tilde{\sigma} \in \Gamma^N$ (and we don't require Hypothesis \ref{Lemma nu 0} either). Let $\tau_N$ be any stopping time such that there is a positive sequence $(\epsilon_N)_{N\geq 1}$ such that $\lim_{N\to\infty} \epsilon_N = 0$ and a measure $\eta \in \mathcal{P}(\Gamma \times \mathcal{E})$ such that with unit probability
%\begin{align}
%d_W\big( \hat{\nu}^N_{\tau_N} , \eta \big) \leq \epsilon_N.
%\end{align}
%Then Theorem \ref{Theorem Large N limiting dynamics} and Theorem \ref{Main Large Deviations Theorem} hold true if we take as initial conditions $\sigma^j_0 = \tilde{\sigma}^j_{\tau_N}$.
% (We emphasize in particular that $\tau_N$ could diverge to $\infty$ as $N\to\infty$).
%\end{corollary}

%\begin{proof}
%Suppose that $\mu \in \mathcal{M}(\mathcal{E} \times [0,\infty))^{\Gamma \times \Gamma}$.
%\end{proof}
 %write $\nu_{t,x} \in \mathcal{P}(\Gamma)$ to be $\nu_t$, conditioned on $x \in \mathcal{E}$,

%Now any $(\nu_t)_{t\geq 0} \in  D\big( [0,\infty) , \mathcal{P}(\Gamma \times \mathcal{E}) \big)$.

\section{Proofs}

There are two main steps to our proof of Theorem \ref{Main Large Deviations Theorem}. The first step is to show that the system can be approximated very well by a system with averaged interactions. The next step is to prove the Large Deviation Principle for the system with averaged interactions (this is Theorem \ref{Main Large Deviations Theorem Averaged}). The main result of this paper (Theorem \ref{Main Large Deviations Theorem}) will follow from these results thanks to \cite[Theorem 4.2.13]{Dembo1998}.

\subsection{Proof Outline}

%Our proof proceeds by transforming the Large Deviations of the uncoupled system to the Large Deviations of the averaged system through a time-rescaling transformation.  Lets first outline the Large Deviations for the uncoupled system.
We start by defining an approximate process with `averaged' interactions. The Large Deviations of this system will be indistinguishable from the original system in the large $N$ limit.
\subsection{System with Averaged Interactions} \label{Section Averaged System}

%We define a subspace $\mathcal{M}_N(\mathcal{D} \times [0,\infty)) \subset \mathcal{M}(\mathcal{D} \times [0,\infty))$ as follows. Every element of $\mathcal{M}_N(\mathcal{D} \times [0,\infty))$ can be written as an `empirical flux' generated by $N$ elements: i.e. any $\mu \in \mathcal{M}_N(\mathcal{D} \times [0,\infty))$ is such that there exist piecewise constant functions $\lbrace \bar{\sigma}^j \rbrace_{j\in I_N} \subseteq D\big( [0,\infty), \mathbb{N} \big)$ such that $t \to  \bar{\sigma}^j(t)$ can only increase by exactly $1$,

%\begin{align} \label{eq: M N D}
%\mu\big(  A \times [a,b] \big) = N^{-1} \sum_{j\in I_N}\sum_{s  \in [a,b]} \chi\big\lbrace x^j_N \in A ,  \bar{\sigma}^j_{s^-}  \neq \bar{\sigma}^j_{s} \big\rbrace .
%\end{align}
%Its also stipulated that for all $t \leq \log N$ and all $j\in I_N$,
%\begin{align}
%\bar{\sigma}^j\big( t + \exp(-N^2) \big) - \bar{\sigma}^j\big( t  \big) \leq 1.
%\end{align}
%One can check that
%\begin{lemma}
%$\mathcal{M}_N(\mathcal{D} \times [0,\infty)) $ is closed. Also,
%\begin{align}
%\lsup{N} N^{-1} \log \mathbb{P}\big( \grave{\mu}^N_{\mathbf{Y}} \notin \mathcal{M}_N(\mathcal{D} \times [0,\infty))  \big) = -\infty.
%\end{align}
%\end{lemma}
%Define $\mathcal{M}_*(\mathcal{D} \times [0,\infty)) \subset  \mathcal{M}(\mathcal{D} \times [0,\infty))$ to be the closure of $\bigcup_{N\geq 1}  \mathcal{M}_N(\mathcal{D} \times [0,\infty))$.

% and time-discretized reaction strengths. For positive integers $a,m$, define $t^{(m)}_a = a m^{-1}$, and for any $t \geq 0$, write
%\[
%t^{(m)} = \sup\big\lbrace t^{(m)}_a : t^{(m)}_a \leq t \big\rbrace .
%\]
Let $\lbrace \bar{\sigma}^{j}(t) \rbrace_{j\in I_N}$ be a system of jump-Markov Processes such that, for $\alpha \neq \bar{\sigma}^{j}(t) $, and $h\ll 1$,
\begin{align}
 \mathbb{P}\big( \bar{\sigma}^j(t+h) = \alpha \; | \; \mathcal{F}_t \big) = h f_{\alpha}\big(x^j_N, \bar{\sigma}^j(t) ,  \bar{w}^{j}(t)  \big)+ O(h^2),
\end{align}
where $\bar{w}^j(t) = \big( \bar{w}^j_{\beta}(t) \big)_{\beta \in \Gamma}$ and
\begin{align}
\bar{w}_{\beta}^j(t) = N^{-1}  \sum_{k=1}^N \mathcal{J}(x^j_N , x^k_N) \chi\lbrace \bar{\sigma}^k(t) = \beta \rbrace .
\end{align}
We take the initial conditions to be the same for the two systems, i.e. $\bar{\sigma}^j(0) = \sigma^j(0)$. {\color{blue} We use the convenient time-rescaled representation of the Hawkes Process \cite{Ethier1986, Anderson2015}. Let $\big\lbrace Y^j_{\alpha\beta}(t) \big\rbrace_{(\alpha,\beta)\in \Xi}$ be independent unit-intensity counting processes.} Then define $\lbrace Z^j_{\alpha\beta}(t) \rbrace_{(\alpha,\beta)\in \Xi }$ to `count' the number of $\alpha \mapsto \beta$ transitions in the coupled system, i.e. be such that
\begin{align}
Z^j_{\alpha\beta}(t) =& Y^j_{\alpha\beta}\bigg(\int_0^t f_{\alpha}\big(x^j_N, \bar{\sigma}^j(s) ,  \bar{w}^{j}(s)  \big) \chi\lbrace \bar{\sigma}^j_s = \alpha \rbrace ds \bigg),  \label{eq: Z j alpha beta definition}
\end{align}
and for any $\alpha \in \Gamma$,
\begin{align}
\bar{\sigma}^j(t) =& \alpha \text{ if and only if }\\
 \chi\lbrace \bar{\sigma}^j(0) = \alpha \rbrace + \sum_{(\beta,\alpha) \in \Xi}  Z^j_{\beta\alpha}(t) - \sum_{(\alpha,\beta) \in \Xi}  Z^j_{\alpha\beta}(t)  =& 1. \label{eq: count the transitions}
\end{align}
Since (with unit probability) $Y^j_{\alpha\beta}(t)$ only makes a finite number of jumps over a bounded time interval, one easily checks (through iteratively solving the system from jump to jump) that there exists a unique $\lbrace \bar{\sigma}^j(t) \rbrace_{j\in I_N \fatsemi t \geq 0}$ satisfying \eqref{eq: Z j alpha beta definition} - \eqref{eq: count the transitions}.

We define the empirical reaction flux $\bar{\mu}^N_{\alpha\mapsto\beta} \in \mathcal{M}\big( \mathcal{E} \times [0,\infty)\big)$ to be such that for any $A \in \mathfrak{B}(\mathcal{E})$ and an interval $[a,b] \subset [0,\infty)$,
\begin{align}
\bar{\mu}^N_{\alpha\mapsto\beta}\big( A \times [a,b] \big) = N^{-1}\sum_{j\in I_N} \sum_{t \in [a,b]} \chi\big\lbrace   x^j_N \in A , Z^j_{\alpha\beta}(t^-) \neq  Z^j_{\alpha\beta}(t) \big\rbrace .
\end{align}
We write $\bar{\mu}^N = \big( \bar{\mu}^N_{\alpha\mapsto\beta} \big)_{\alpha,\beta\in\Gamma} \in  \mathcal{M}\big( \mathcal{E} \times [0,\infty)\big)^{\Gamma \times \Gamma}$. Our first goal is to prove a Large Deviation Principle for this averaged system. The rate function $\mathcal{G}_T$ is defined earlier in \eqref{eq: G mu rate function}.
\begin{theorem} \label{Main Large Deviations Theorem Averaged}
Let $\mathcal{A}, \mathcal{O} \subseteq \mathcal{M}\big(  \mathcal{E} \times [0,T]\big)^{ \Xi}$ be (respectively) closed and open. Then
\begin{align}
\lsup{N} N^{-1}\log \mathbb{P}\big( \bar{\mu}^N_T \in \mathcal{A} \big) &\leq - \inf_{\mu \in \mathcal{A}} \mathcal{G}_T(\mu) \\
\linf{N} N^{-1}\log \mathbb{P}\big( \bar{\mu}^N_T \in \mathcal{O} \big) &\geq - \inf_{\mu \in \mathcal{O}} \mathcal{G}_T(\mu) . \label{LDP Lower Bound bar mu N}
\end{align}
Furthermore, $\mathcal{G}_T$ is lower semicontinuous and has compact level sets.
\end{theorem}
Theorem \ref{Main Large Deviations Theorem Averaged} will be proved further below, in Section \ref{Section Prove LDP Averaged System}.

Now in order that our Main Theorem holds, in fact it suffices that we prove it for arbitrarily long times. This is stated in the following Lemma.

\begin{lemma} \label{Main Large Deviations Theorem finite times}
Let $\mathcal{A}, \mathcal{O} \subseteq \mathcal{M}\big( \mathcal{E} \times [0,T] \big)^{\Xi}$ be (respectively) closed and open. Then
\begin{align}
\lsup{N} N^{-1}\log \mathbb{P}\big( \hat{\mu}^N_T \in \mathcal{A} \big) &\leq - \inf_{\mu \in \mathcal{A}} \mathcal{G}_T(\mu) \\
\linf{N} N^{-1}\log \mathbb{P}\big( \hat{\mu}^N_T \in \mathcal{O} \big) &\geq - \inf_{\mu \in \mathcal{O}} \mathcal{G}_T(\mu) .
\end{align}
Furthermore, $\mathcal{G}$ is lower semicontinuous and has compact level sets.
\end{lemma}

We next state the proof of our main result, Theorem \ref{Main Large Deviations Theorem}.
\begin{proof}
Recall the continuous projection of a measure onto its restriction upto time $T$, i.e.
\begin{equation}
\pi_T:  \mathcal{M}\big(  \mathcal{E} \times [0, \infty )  \big)^{\Xi } \mapsto  \mathcal{M}\big(  \mathcal{E} \times [0,T] \big)^{\Xi }.
 \end{equation}
 Furthermore, the topology on $ \mathcal{M}\big(  \mathcal{E} \times[0, \infty )  \big)^{\Xi }$ is generated by open sets of the form
 \begin{align}
 \bigg\lbrace \mu \in  \mathcal{M}\big(  \mathcal{E} \times[0, \infty )  \big)^{\Xi } \; : \; \pi_T(\mu) \in \mathcal{O}_T \bigg\rbrace
 \end{align}
 for some open $\mathcal{O}_T \subseteq \mathcal{M}\big(  \mathcal{E} \times [0,T] \big)^{\Xi }$.
 
 The Dawson-Gartner Projective Limits Theorem \cite{Dembo1998} means that the result of Lemma \ref{Main Large Deviations Theorem finite times} extends to a Large Deviations Principle over 
 $ \mathcal{M}\big(  \mathcal{E} \times[0, \infty )  \big)^{\Xi }$, with rate function
 {\color{red}
 \begin{align}
\tilde{\mathcal{G}}_T(\mu) := \lim_{T\to\infty} \mathcal{G}_T\big( \pi_T\mu \big).
 \end{align} }
 However it is immediate from the definitions that
 \begin{equation}
  \mathcal{G}(\mu) = \tilde{\mathcal{G}}_T(\mu).%\lim_{T\to\infty} \mathcal{G}_T\big( \pi_T\mu \big).
  \end{equation}
\end{proof}

We next use Girsanov's Theorem to compare the Large Deviations in our main result (Theorem \ref{Main Large Deviations Theorem})  to the Large Deviation Principle in Theorem \ref{Main Large Deviations Theorem Averaged}. Let $\bar{P}^N_T \in \mathcal{P}\big( \mathcal{D}([0,T], \Gamma)^N \big)$ be the probability law of $\big( \bar{\sigma}^j_t \big)_{j\in I_N \fatsemi t\leq T}$. Let 
$P^N_{J,T} \in \mathcal{P}\big( \mathcal{D}([0,T], \Gamma)^N \big)$ be the probability law of the original system $\big( \sigma^j_t \big)_{j\in I_N \fatsemi t\leq T}$.
Thanks to Girsanov's Theorem \cite{Bremaud1981},
\begin{equation}
\frac{dP^N_{J,T}}{d\bar{P}^N_T}(\sigma) = \exp\big( N \Phi_T(\sigma) \big) \label{eq: Girsanov Expression Gamma T}
\end{equation}
where
\begin{multline}
\Phi_T(\sigma) = N^{-1} \sum_{j\in I_N} \int_0^T  \sum_{\beta: (\sigma^j(s) , \beta ) \in \Xi }\big(   f_{\beta}(x^j_N , \sigma^j(s) , \bar{w}^j_s)  - f_{\beta}(x^j_N , \sigma^j(s) , w^j_s)  \big) ds \\ + N^{-1} \sum_{j\in I_N} \sum_{s \leq T \; : \; \sigma^j(s^-) \neq \sigma^j(s) } \bigg\lbrace \log  \big(  f_{\sigma^j(s)}(x^j_N , \sigma^j(s^-) , w^j_{s})  \big) \\-  \log  \big(  f_{\sigma^j(s)}(x^j_N , \sigma^j(s^-) , \bar{w}^j_{s})  \big) 
 \bigg\rbrace .
\end{multline}%+ N^{-1} \sum_{j\in I_N}  \sum_{\beta \neq \sigma^j_T} \bigg\lbrace \log\big(  f_{\beta}(x^j_N , \sigma^j(T) , w^j_T)  \big) -  \log\big(  f_{\beta}(x^j_N , \sigma^j(T) , \bar{w}^j_T)  \big) 
 In the following lemma we prove that the Girsanov Exponent is with very high probability uniformly upperbounded.
\begin{lemma}\label{Lemma Girsanov Comparison}
For any $\epsilon,T > 0$,
\begin{align}
\lsup{N} N^{-1} \log \mathbb{P}\big( \big| \Phi_T(\sigma) \big| \geq \epsilon \big) = - \infty.
\end{align}
\end{lemma}
We can now state the proof of Lemma \ref{Main Large Deviations Theorem finite times}.
\begin{proof}
Let 
\[
\mathcal{V}_{N,\epsilon} = \big\lbrace \big| \Phi_T(\sigma) \big| \leq \epsilon  \big\rbrace .
\]
Starting with the upper bound, let $\mathcal{A} \subset \mathcal{M}\big(\mathcal{E} \times [0,\infty) \big)^{\Xi}$ be closed. Then for any $\epsilon > 0$,
\begin{multline}
\lsup{N} N^{-1}\log \mathbb{P}\big( \hat{\mu}^N \in \mathcal{A} \big) \leq \\
 \max\bigg\lbrace \lsup{N} N^{-1}\log \mathbb{P}\big( \hat{\mu}^N \in \mathcal{A} , \mathcal{V}_{N,\epsilon} \big) ,  \lsup{N} N^{-1}\log \mathbb{P}\big( \mathcal{V}_{N,\epsilon}^c \big) \bigg\rbrace
\end{multline}
The second term on the RHS is $-\infty$, thanks to Lemma \ref{Lemma Girsanov Comparison}. It thus suffices that we demonstrate that
\begin{align} \label{eq: to show epsilon zero G mu}
\lim_{\epsilon \to 0^+}  \lsup{N} N^{-1}\log \mathbb{P}\big( \hat{\mu}^N \in \mathcal{A} , \mathcal{V}_{N,\epsilon} \big)  \leq - \inf_{\mu \in \mathcal{A}} \mathcal{G}(\mu).
 \end{align}
 Now thanks to the Girsanov Expression in \eqref{eq: Girsanov Expression Gamma T}
 \begin{align}
  \lsup{N} N^{-1}\log \mathbb{P}\big( \hat{\mu}^N \in \mathcal{A} , \mathcal{V}_{N,\epsilon} \big)  \leq & \epsilon +   \lsup{N} N^{-1}\log \mathbb{P}\big( \bar{\mu}^N \in \mathcal{A} , \mathcal{V}_{N,\epsilon} \big) \nonumber \\
  \leq &  \epsilon +   \lsup{N} N^{-1}\log \mathbb{P}\big( \bar{\mu}^N \in \mathcal{A}   \big) \\
  \leq & \epsilon - \inf_{\mu \in \mathcal{A}} \mathcal{G}(\mu),
  \end{align}
  thanks to Theorem \ref{Main Large Deviations Theorem Averaged}. Taking $\epsilon$ to $0$, we obtain \eqref{eq: to show epsilon zero G mu}.
  
  Turning to the lower bound, let $\mathcal{O} \subset \mathcal{M}\big(\mathcal{E} \times [0,\infty) \big)^{\Gamma \times \Gamma}$ be open, we find that for any $\epsilon > 0$,
  \begin{align}
  \linf{N} N^{-1}\log \mathbb{P}\big( \hat{\mu}^N \in \mathcal{O}  \big) \geq &  \linf{N} N^{-1}\log \mathbb{P}\big( \hat{\mu}^N \in \mathcal{O} , \mathcal{V}_{N,\epsilon} \big) \nonumber \\
  \geq & \linf{N} N^{-1}\log \mathbb{P}\big( \bar{\mu}^N \in \mathcal{O} , \mathcal{V}_{N,\epsilon} \big) - \epsilon,
  \end{align}
  thanks to \eqref{eq: Girsanov Expression Gamma T}. Now
  \begin{align}
  \mathbb{P}\big( \bar{\mu}^N \in \mathcal{O} , \mathcal{V}_{N,\epsilon} \big)  = \mathbb{P}\big( \bar{\mu}^N \in \mathcal{O} \big) - \mathbb{P}\big( \bar{\mu}^N \in \mathcal{O} , \mathcal{V}^c_{N,\epsilon} \big) 
  \end{align}
  and since $\linf{N} N^{-1}\log \mathbb{P}\big( \bar{\mu}^N \in \mathcal{O} , \mathcal{V}^c_{N,\epsilon} \big) = -\infty$, it must hold that
  \begin{align}
  \linf{N} N^{-1}\log \mathbb{P}\big( \bar{\mu}^N \in \mathcal{O} , \mathcal{V}_{N,\epsilon} \big) \geq & \linf{N} N^{-1} \log  \mathbb{P}\big( \bar{\mu}^N \in \mathcal{O} \big) \nonumber \\
\geq & - \inf_{\mu \in \mathcal{O}}\mathcal{G}(\mu).
  \end{align}
  Taking $\epsilon \to 0^+$, we obtain that 
    \begin{align}
  \linf{N} N^{-1}\log \mathbb{P}\big( \hat{\mu}^N \in \mathcal{O}  \big) \geq  - \inf_{\mu \in \mathcal{O}}\mathcal{G}(\mu),
  \end{align}
  as required.
\end{proof}

We next prove Lemma \ref{Lemma Girsanov Comparison}.
\begin{proof}
It suffices to demonstrate the following two inequalities
\begin{align}
\lsup{N}& N^{-1} \log \mathbb{P}\bigg(  N^{-1} \sum_{j\in I_N}  \int_0^T\sum_{\beta\in \Gamma : (\sigma^j_s,\beta) \in \Xi} \big(   f_{\beta}(x^j_N , \sigma^j(s) , \bar{w}^j_s)  \nonumber\\ &- f_{\beta}(x^j_N , \sigma^j(s) , w^j_s)  \big) ds \geq \epsilon / 3 \bigg) = -\infty \label{eq: to compare lemma 1} \\
\lsup{N}& N^{-1} \log \mathbb{P}\bigg(  N^{-1} \sum_{j\in I_N} \sum_{s \leq T \; : \; \sigma^j(s^-) \neq \sigma^j(s) }  \bigg\lbrace \log\big(  f_{\sigma^j(s)}(x^j_N , \sigma^j(s^-) , w^j_{s^-})  \big)\nonumber \\ & -  \log\big(  f_{\sigma^j(s)}(x^j_N , \sigma^j(s^-) , \bar{w}^j_{s^-})  \big) \bigg\rbrace \geq \epsilon /3 \bigg) = -\infty  \label{eq: to compare lemma 2} %\\
%\lsup{N}& N^{-1} \log \mathbb{P}\bigg(  N^{-1} \sum_{j\in I_N}  \sum_{\beta \neq \sigma^j(T)}  \bigg\lbrace \log\big(  f_{\beta}(x^j_N , \sigma^j(T) , w^j_{T})  \big)\nonumber \\ & -  \log\big(  f_{\beta}(x^j_N , \sigma^j(T) , \bar{w}^j_{T})  \big) \bigg\rbrace \geq \epsilon /3 \bigg) = -\infty . \label{eq: to compare lemma 3}
\end{align}
%The demonstration of \eqref{eq: to compare lemma 3} is very similar to that of \eqref{eq: to compare lemma 2} and will be neglected. 

For each $\beta \in \Gamma$, it follows from the fact that $f_{\beta}$ is Lipschitz  that there is a constant $C > 0$ such that
\begin{align*}
N^{-1} \sum_{j\in I_N} \big| f_{\beta}(x^j_N , \sigma^j(s) , \bar{w}^j_s)  - f_{\beta}(x^j_N , \sigma^j(s) , w^j_s) \big| 
\leq N^{-1}C\sum_{j\in I_N} \| \bar{w}^j_s - w^j_s \| .
\end{align*}
Furthermore Assumption \ref{hypothesis average convergence} implies that there must exist a non-random sequence $(\delta_N)_{N\geq 1}$ that decreases to $0$ and such that 
%\begin{equation} \label{eq: tempoprary w difference bound}
%\sup_{j\in I_N} \sup_{s\geq 0} \| \bar{w}^j_s - w^j_s \| \leq \delta_N  %N^{-1}\sum_{j\in I_N} \| \bar{w}^j_s - w^j_s \| \leq \delta_N
%\end{equation}and therefore
\begin{align}
N^{-1}\sum_{j\in I_N} \| \bar{w}^j_s - w^j_s \|  \leq \delta_N.
\end{align}
Once $N$ is large enough that $CT\delta_N < \epsilon/ 3$, \eqref{eq: to compare lemma 1} must hold.

{\color{blue} Turning to \eqref{eq: to compare lemma 2}, we will employ Assumption 4, i.e.  $f_{\beta}$ is (i) Lipschitz and (ii) uniformly lower-bounded by a positive constant and (iii) uniformly upperbounded by a constant $f_{max}$}. There thus exists a constant $C > 0$ such that (for the constant $\eta^j_N$ defined in Assumption \ref{hypothesis average convergence})
\begin{align}
\bigg| N^{-1} \sum_{j\in I_N} \sum_{s \leq T \; : \; \sigma^j(s^-) \neq \sigma^j(s) } & \bigg\lbrace \log\big(  f_{\sigma^j(s) }(x^j_N , \sigma^j(s^-) , w^j_s)  \big) \nonumber \\ &-  \log\big(  f_{\sigma^j(s) }(x^j_N , \sigma^j(s^-) , \bar{w}^j_s)  \big) \bigg\rbrace \bigg|  \nonumber \\
\leq &C N^{-1} \sum_{j\in I_N} \sum_{s \leq T \; : \; \sigma^j(s^-) \neq \sigma^j(s) } \eta^j_N \nonumber  \\
\leq &C N^{-1} \sum_{j\in I_N}\sum_{(\alpha,\beta) \in \Xi } Z^j_{\alpha\beta}(T) {\color{blue} \eta^j_N} \nonumber \\
\leq  &C N^{-1} \sum_{j\in I_N}\sum_{(\alpha,\beta) \in \Xi } Y^j_{\alpha\beta}( f_{max} T) \eta^j_N .
\end{align}
Thanks to Chernoff's Inequality, for a constant $a > 0$,{\color{blue}
\begin{multline}
\mathbb{P}\bigg( C N^{-1} \sum_{j\in I_N}\sum_{(\alpha,\beta) \in \Xi }  Y^j_{\alpha\beta}( f_{max} T) \eta^j_N \geq \epsilon \bigg) \nonumber \\ 
\leq \mathbb{E}\bigg[ \exp\bigg( a   \sum_{j\in I_N}\sum_{(\alpha,\beta) \in \Xi } Y^j_{(\alpha,\beta)}( f_{max} T) \eta^j_N - a C^{-1} N \epsilon \bigg) \bigg] \nonumber \\
\leq \exp\bigg(   |\Xi|   f_{max}T \sum_{j\in I_N} \big( \exp(a \eta^j_N )-1  \big)  - aC^{-1} N \epsilon \bigg) ,
\end{multline}
using the standard expression for the moment generating function of the Poisson Distribution.}
%\leq \prod_{j\in I_N}\big\lbrace 1 +  f_{max}T \big( \exp(a \eta^j_N )-1  \big) \big\rbrace^{|\Xi |} \exp\big( - aC^{-1} N \epsilon \big) \nonumber \\

We next claim that for arbitrarily large $a$
\begin{equation}
\lim_{N\to\infty}\bigg\lbrace N^{-1} |\Xi|   f_{max}T \sum_{j\in I_N} \big( \exp(a \eta^j_N )-1  \big) \bigg\rbrace  = 0. \label{eq: claim for large a}
\end{equation}
Now Assumption \ref{hypothesis average convergence} implies that there exists a non-random constant $C_{\mathcal{J}}$ such that $\eta^j_N \leq \mathcal{C}_{\mathcal{J}}$ with unit probability. We thus obtain that, for $\delta$ small enough that for all $b\in [0,\delta]$, {\color{blue} $\exp(ab  )-1 \leq 2ab$}, 
\begin{align*}
N^{-1} \sum_{j\in I_N} \big( \exp(a \eta^j_N )-1  \big)   \leq & N^{-1}\sum_{j\in I_N} 2a\eta^j_N \chi \big\lbrace \eta^j_N \leq \delta  \big\rbrace \nonumber \\ &+N^{-1}\sum_{j\in I_N} \exp\big( a C_{\mathcal{J}} \big)  \chi \big\lbrace \eta^j_N \geq \delta \big\rbrace  \\
\leq &N^{-1}\sum_{j\in I_N} 2a\eta^j_N + N^{-1}\sum_{j\in I_N} \exp\big( a C_{\mathcal{J}} \big)  \chi \big\lbrace \eta^j_N \geq  \delta  \big\rbrace .
\end{align*}
Assumption \ref{hypothesis average convergence} implies that 
\begin{align}
\lim_{N\to\infty} N^{-1}\sum_{j\in I_N}  \chi \big\lbrace \eta^j_N \geq \delta \big\rbrace &= 0 \\
\lim_{N\to\infty} N^{-1}\sum_{j\in I_N} 2a\eta^j_N  &= 0.
\end{align}
We have thus established \eqref{eq: claim for large a}. 

We therefore find that for arbitrarily large $a$,
\begin{align}
\lsup{N} N^{-1} \log \mathbb{P}\bigg( C N^{-1} \sum_{j\in I_N}\sum_{(\alpha,\beta) \in \Xi} Y^j_{\alpha,\beta}( f_{max} T) \eta^j_N \geq \epsilon \bigg)  \leq - a C^{-1}.
\end{align}
Taking $a \to \infty$, we obtain \eqref{eq: to compare lemma 2}. We have thus established \eqref{eq: to compare lemma 1} and \eqref{eq: to compare lemma 2}.
\end{proof}
%where
%\begin{align}
% \frac{d}{dt} \mathcal{W}(t,x^j_N , \theta^k_N) = g\big( \mathcal{W}(t,x^j_N, \theta^k_N) , \bar{\sigma}^j(t),\bar{\sigma}^k(t) \big).
%\end{align}
For a reaction $\alpha \mapsto \beta$, define the empirical flux measure $\bar{\mu}^N_{\alpha \mapsto \beta} \in \mathcal{M}(\mathcal{E} \times [0,\infty))$ for the averaged system to be such that, for measurable $A \subseteq \mathcal{E}$ and a time interval $[a,b]$,
\begin{align} \label{eq: transition empirical measure averaged}
\bar{\mu}^N_{\alpha \mapsto \beta }\big(  A \times [a,b] \big) = N^{-1} \sum_{j\in I_N}\sum_{s  \in [a,b]} \chi\big\lbrace x^j_N \in A ,  \bar{\sigma}^j_{s^-} = \alpha , \bar{\sigma}^j_{s} = \beta \big\rbrace .
\end{align} 

Writing $\delta_m = \big(\log m \big)^{-1}$, define the set
\begin{multline} \label{eq: K m set}
\mathcal{K}_m = \bigg\lbrace \mu \in \mathcal{M}\big( \mathcal{E} \times [0,\infty) \big)^{\Xi} \; :  \\  \text{ For all }0\leq b \leq m^2, \;  \mu_{\alpha \mapsto \beta} \big( \mathcal{E} \times [ b/m , (b+1) / m] \big) \leq \delta_m \bigg\rbrace .
\end{multline}
\begin{lemma} \label{Lemma K m bound}
For any $m\in \mathbb{Z}^+$, there exists $N_m$ such that for all $N\geq N_m$,
\begin{align}
N^{-1}\log \mathbb{P}\big(\bar{\mu}^N \notin \mathcal{K}_m  \big) \leq - \frac{1}{2} \sqrt{\log m}
\end{align}
\end{lemma}
\begin{proof}
Write $t^{(m)}_a = a  / m$. Using a union of events bound,
\begin{align}
 \mathbb{P}\big(\bar{\mu}^N \notin \mathcal{K}_m  \big) \leq \sum_{a=0}^{m^2} \mathbb{P}\big( N^{-1}\sup_{\alpha,\beta\in\Gamma} \sum_{j\in I_N} \big( Z^j_{(\alpha,\beta)}\big( t^{(m)}_{a+1} \big) -  Z^j_{(\alpha,\beta)}\big( t^{(m)}_{a} \big) \big) \geq \delta_m \big)
\end{align}
For a positive integer $a$, and $c > 0$, thanks to Chernoff's Inequality,
\begin{align}
\mathbb{P}\big( N^{-1}\sup_{(\alpha,\beta)\in \Xi }& \sum_{j\in I_N} \big( Z^j_{\alpha\beta}\big( t^{(m)}_{a+1} \big) -  Z^j_{\alpha\beta}\big( t^{(m)}_{a} \big) \big) \geq \delta_m \big) \nonumber \\
&\leq \sum_{(\alpha,\beta)\in \Xi } \mathbb{P}\big( N^{-1}  \sum_{j\in I_N} \big( Z^j_{\alpha\beta}\big( t^{(m)}_{a+1} \big) -  Z^j_{\alpha\beta}\big( t^{(m)}_{a} \big) \big) \geq \delta_m \big) \nonumber \\
&\leq \sum_{(\alpha,\beta)\in \Xi } \mathbb{E}\bigg[ \exp\bigg( c \sum_{j\in I_N} \big( Z^j_{\alpha\beta}\big( t^{(m)}_{a+1} \big) -  Z^j_{\alpha\beta}\big( t^{(m)}_{a} \big) \big) -c N \delta_m \bigg) \bigg] \nonumber \\
&\leq \sum_{(\alpha,\beta)\in \Xi } \mathbb{E}\bigg[ \exp\bigg( c \sum_{j\in I_N} \big( Y^j_{\alpha\beta}\big( t^{(m)}_{a} + f_{max} / m \big) -  Y^j_{\alpha\beta}\big( t^{(m)}_{a} \big) \big)-c N \delta_m \bigg) \bigg] \nonumber \\
&= \exp\big( -c N \delta_m \big)  \sum_{ (\alpha,\beta)\in \Xi }  \bigg( 1 +f_{max}m^{-1} \big( \exp(c) - 1 \big) \bigg)^N.
\end{align}
Thanks to the inequality $1+x \leq \exp(x)$,
\begin{multline}
 \exp\big( -c N \delta_m \big)  \sum_{(\alpha,\beta)\in \Xi }  \bigg( 1 + f_{max}m^{-1} \big( \exp(c) - 1 \big) \bigg)^N \leq \\ 
 | \Gamma |^2 \exp\bigg( -c N \delta_m + N  f_{max}m^{-1} \big( \exp(c) - 1 \big) \bigg). 
\end{multline}
We choose $\delta_m = (\log m)^{-1/2}$ and $c = \log m$, and we obtain that {\color{blue}
\begin{align}
N^{-1}\log \mathbb{P}\big( \bar{\mu}^N \notin \mathcal{K}_m  \big) \leq - \sqrt{\log m} / 2,
\end{align} 
as long as $m$ is large enough that $ \sqrt{ \log m } > \sqrt{\log m} / 2 + f_{max}$. }
%Taking $m\to\infty$, we obtain the lemma. %Since $\mathcal{E}$ is compact, the compactness of $\mathcal{K}_m$ is immediate from Prokhorov's Theorem.
\end{proof}

\subsection{Proof of the Large Deviations of the Averaged System} \label{Section Prove LDP Averaged System}

In this section we prove Theorem \ref{Main Large Deviations Theorem Averaged}. Our proof proceeds by first proving the Large Deviation Principle for a spatially-discretized system, before showing that the spatially-discretized system is an excellent approximation to the spatially-continuous system. 

Our main aim in this section will be to prove Theorem \ref{Main Large Deviations Theorem Averaged} , which we now restate.
\begin{theorem} \label{Lemma finite time intervals LDP}
Let $\mathcal{A}, \mathcal{O} \subseteq \mathcal{M}\big(  \mathcal{E} \times [0,T] \big)^{\Xi }$ be (respectively) closed and open. Then
\begin{align}
\lsup{N} N^{-1}\log \mathbb{P}\big( \bar{\mu}^N \in \mathcal{A} \big) &\leq - \inf_{\mu \in \mathcal{A}} \mathcal{G}_T(\mu)\label{LDP Upper Bound bar mu N finite T} \\
\linf{N} N^{-1}\log \mathbb{P}\big( \bar{\mu}^N \in \mathcal{O} \big) &\geq - \inf_{\mu \in \mathcal{O}} \mathcal{G}_T(\mu) , \label{LDP Lower Bound bar mu N finite T}
\end{align}
and we recall that $\mathcal{G}_T$ is defined in \eqref{eq: G T mu definition}.
\end{theorem}

%We next prove that Lemma \ref{Lemma finite time intervals LDP} implies Theorem \ref{Main Large Deviations Theorem Averaged}.

For the rest of this Section, we focus on proving Theorem \ref{Lemma finite time intervals LDP}.

For an integer $m \gg 1$, we are going to define an approximate system $\lbrace \sigma^{(m),j}(t) \big\rbrace_{j\in I_N}$ with spatially-discretized interactions. 
To this end, for a positive integer $m$, let $\big\lbrace S^{(m)}_i \big\rbrace_{1\leq i \leq M_m} \subset \mathfrak{B}\big( \mathcal{E} \big)$ be disjoint measurable sets such that (i) $\text{diam}\big(S^{(m)}_i \big) \leq m^{-1}$, (ii) the interior of $S^{(m)}_i$ is nonempty and 
\begin{align}
\mathcal{E} =& \bigcup_{1\leq i \leq M_m} S^{(m)}_i .
\end{align} 
and (iv) $S^{(m)}_i$ is connected. Let $\theta^{(m)}_i$ be any point in $S^{(m)}_i$ and define $x^{(m),j}_N := \theta^{(m)}_l$ precisely when $x^j_N \in S^{(m)}_i$. 
%We have assumed in Assumption  that the distribution of the nodes asymptotes to a continuous density. This means that
%\begin{align}
%\lim_{m\to\infty} \sup_{i \leq M_m}\lim_{N\to\infty} N^{-1} \sum_{j\in I_N} \chi\big\lbrace x^j_N \in S^{(m)}_i \big\rbrace = 0. \label{Eq: No accumulation of nodes}
%\end{align}

It is assumed that the intensity of transition assumes the form, for $h \ll 1$,
\begin{align}
\mathbb{P}\big( \sigma^{(m),j}(t+h) = \alpha \; | \; \mathcal{F}_t \big) = h f_{\alpha}^{(m)}\big(x^j_N , \sigma^{(m),j}(t) , w^{(m)}(x^j_N , \sigma^{(m)}(t) ) \big)+ O(h^2)
\end{align}
where writing $S^{(m)}_i$ to be such that $x \in S^{(m)}_i$,
\begin{align}
 f_{\alpha}^{(m)}\big(x ,  \beta , \sigma(t) \big) =f_{\alpha}\big(\theta^{(m)}_i,\beta , w^{(m)}( x , \sigma(t) ) \big)
 \end{align}
and for any  $\theta \in S^{(m)}_i$ and any $\zeta \in \Gamma$,
\begin{align}
w^{(m)} (x , \sigma(t) ) =& \big( w^{(m)}_{\zeta}(x  , \sigma(t) ) \big)_{\zeta\in\Gamma} \\
w^{(m)}_{\zeta}(x , \sigma(t) ) =& N^{-1} \sum_{j=1}^{M_m} \sum_{k\in I_N : x^k_N \in S^{(m)}_i } \mathcal{J}(\theta^{(m)}_i , x^{(m),k}_N) \chi\big\lbrace \sigma^k(t) = \zeta \big\rbrace  . \label{eq: W s zeta definitoin}
\end{align}
For $(\alpha,\beta) \in \Xi$, define the empirical flux measure $\bar{\mu}^{(m),N}_{\alpha\mapsto\beta} \in \mathcal{M}\big( \mathcal{E}^{(m)} \times [0,\infty) \big)$ to be such that for measurable $A \subseteq \mathcal{E}$ and measurable $B \subseteq [0,\infty)$,
\begin{align}
\bar{\mu}_{\alpha\mapsto\beta }^{(m),N}(A \times B) = N^{-1} \sum_{j\in I_N} \chi\big\lbrace x^{ (m),j}_N \in A \big\rbrace \sum_{t \in B} \chi \big\lbrace \sigma^{(m),j}(t^-) = \alpha , \sigma^{(m),j}(t) = \beta  \big\rbrace ,
\end{align}
and write  $\bar{\mu}^{(m),N} = \big(\bar{\mu}^{(m),N}_{\alpha\mapsto\beta} \big)_{(\alpha,\beta) \in \Xi} \in  \mathcal{M}\big( \mathcal{E}^{(m)} \times [0,\infty) \big)^{\Xi}$. 
%\Lambda^{(m)}_{\alpha \mapsto \beta,t}(\theta,\nu_0, \mu) =& \int_0^t f_{(\beta)}(\theta , \alpha , W^{(m)}_{s}(\theta)) \nu_s(\alpha \times S^{(m)}_i) ds \text{ where } \label{eq: Lambda alpha beta definition}\\
%
%
% For any $\nu_0 \in \mathcal{P}( \Gamma \times \mathcal{E})$ and $\mu \in \mathcal{M}\big(  \mathcal{E} \times [0,\infty) \big)^{\Gamma\times\Gamma}$  write 
%\[
%( \nu_{t} )_{t\geq 0} = \Psi_{\nu_0}(\mu)
%\]
%
%For any $\alpha \neq \beta \in \Gamma$, $z \in  \mathcal{D}([0,t], \Gamma)$, we write $\Lambda_{(\alpha \mapsto \beta,\nu_0,\mu)} :   \mathcal{E} \times [0,\infty) \to  \mathcal{E} \times [0,\infty) $ to be such that
%\begin{align}
%\Lambda^{(m)}_{(\alpha \mapsto \beta, \nu_0,\mu)}(\theta,t) = \big( \theta , \Lambda^{(m)}_{(\alpha \mapsto \beta),t}(\theta, \nu_0,\mu) \big).
%\end{align}
%We now define $\Phi^{(m)}_{\nu_0}(\mu) := \xi = (\xi_{\alpha\mapsto \beta})_{\alpha,\beta\in \Gamma}$ as follows. For any $A \in \mathfrak{B}(\mathcal{E})$ and a time interval $ B \in \mathfrak{B}([0,\infty))$, we stipulate that 
% \begin{align}
%\xi_{\alpha \mapsto \beta}(  \Lambda^{(m)}_{(\alpha \mapsto \beta,z,\nu_0,\mu)} (A \times B) ) = \mu_{\alpha \mapsto \beta}\big(  A \times B \big).
%\end{align}
%We obtain the following property.
%\begin{lemma}\label{Lemma Lambda lipschitz}
%$\Phi^{(m)}$ is uniquely well-defined for any $m\geq 1$. Furthermore, $\Phi^{(m)}$ is continuous in both of its arguments, as long as $\mathcal{M}( \mathcal{E} \times [0,\infty) )^{\Gamma\times\Gamma}$ is endowed with the topology $\mathcal{T}$ defined in the Appendix. 
%\end{lemma}

We next define a function $\mathcal{G}^{(m)}_T: \mathcal{M}\big( \mathcal{E}^{(m)} \times [0,T] \big)^{\Xi} \mapsto \mathbb{R}$ that will characterize the Large Deviation probabilities of the spatially-discretized process. %If $\mu \in  \mathcal{M}\big( \mathcal{E} \times [0,\infty) \big)^{\Xi}$ is such that there exists $(\alpha,\beta) \in \Xi$ such that
%\begin{align}
%\mu_{\alpha\mapsto\beta}\big( \mathcal{E}^{(m)} \times [0,\infty) \big) \neq 1 \text{ where  } \mathcal{E}^{(m)} = \bigcup_{i=1}^{M_m} \theta^{(m)}_i
%\end{align}
%then define $\mathcal{G}^{(m)} (\mu) = \infty$. Otherwise, we may assume that $\mu$ is 
Any  $\mu \in  \mathcal{M}\big( \mathcal{E} \times [0,\infty) \big)^{\Xi}$ must be of the form, for measurable $B \subseteq [0,\infty)$,
\begin{align}
\mu_{\alpha\mapsto \beta}( \theta^{(m)}_i \times B) =   \mu_{\alpha\mapsto\beta}^{(m,i)}(B) \label{eq: mu m decomposition}
\end{align}
for measures $\big( \mu_{\alpha\mapsto\beta}^{(m,i)} \big)_{i\leq M_m} \subset \mathcal{M}([0,\infty))$. Next, if for some $(\alpha,\beta) \in \Xi$, and some $i \leq M_m$, $\mu_{\alpha\mapsto\beta}^{(m,i)}$ does not have a density, then we define
\[
\mathcal{G}_T^{(m)}(\mu) = \infty.
\]
Otherwise, writing $p^{(m,i)}_{\alpha\mapsto\beta} :  [0,\infty) \mapsto [0,\infty)$ to be the density of $\mu_{\alpha\mapsto\beta}^{(m,i)}$, we define
 \begin{align}
\mathcal{G}^{(m)}_T (\mu) =  \sum_{ (\alpha , \beta) \in \Xi} \sum_{i=1}^{M_m}    \int_0^T   \ell\bigg(p^{(m,i)}_{\alpha \mapsto \beta}(x,t) / \lambda^{(m,i)}_{\alpha\mapsto\beta}( t) \bigg) \lambda^{(m,i)}_{\alpha\mapsto\beta}(t)   dt , \label{eq:  G (m) T definition}
 \end{align}
where
 \begin{align}
 \lambda^{(m,i)}_{\alpha\mapsto\beta}(t) =& f_{\beta}\big(\theta^{(m)}_i , \alpha , w^{(m,i)}(t) \big) \nu_t^{(m,i)}(\alpha)   \text{ where } \\
 w^{(m,i)}(t) =& \big( w^{(m,i)}_{\zeta}(t) \big)_{\zeta \in \Gamma} \\
 w^{(m,i)}_{\zeta}(t) =& \sum_{j=1}^{M_m} \mathcal{J}(\theta^{(m)}_i , \theta^{(m)}_j) \nu_t^{(m,j)}(\zeta)   \\
\nu_t^{(m,j)}(\zeta)  =&   \nu_0\big(\zeta \times S^{(m)}_j \big) + \int_0^t \bigg( \sum_{\beta: (\beta,\zeta) \in \Xi} p^{(m,j)}_{\beta \mapsto \zeta}(s) - \sum_{\beta: (\zeta,\beta) \in \Xi}  p^{(m,j)}_{\zeta \mapsto \beta}(s)  \bigg) ds .
 \end{align}
 % \begin{lemma}
% Let $\big\lbrace \xi^{(m)} \big\rbrace_{m\geq 1}$ be any sequence of measures such that (i) \newline $\lim_{m\to \infty} \xi^{(m)} = \mu$ for some $\mu \in \mathcal{M}\big( \mathcal{E} \times [0,\infty) \big)$ and (ii)
% \begin{equation}
% \lsup{m} \mathcal{G}^{(m)}\big( \xi^{(m)} \big) < \infty.
% \end{equation}
% Then it holds that
% \begin{align}
% \lim_{m\to\infty} \mathcal{G}^{(m)}\big( \xi^{(m)} \big) = \mathcal{G}(\mu)
% \end{align} 
% \end{lemma}
% \begin{proof}
% We first claim that 
% \end{proof}
We can now characterize the Large Deviations of the above process with spatially-discretized interactions.
  \begin{theorem} \label{Large Deviations Spatially Discretized System}
 Let $\mathcal{A},\mathcal{O} \subset \mathcal{M}\big( \mathcal{E} \times [0,T] \big)^{\Xi}$ be (respectively) closed and open sets. Then
 \begin{align}
 \lsup{N} N^{-1} \log \mathbb{P}\big( \bar{\mu}_T^{(m),N} \in \mathcal{A} \big) &\leq - \inf_{\mu \in \mathcal{A}} \mathcal{G}_T^{(m)}(\mu) \\
  \linf{N} N^{-1} \log \mathbb{P}\big( \bar{\mu}_T^{(m),N}  \in \mathcal{O} \big) &\geq - \inf_{\mu \in \mathcal{O}} \mathcal{G}_T^{(m)}(\mu).
 \end{align}
 \end{theorem}
 \begin{proof}
 Let $ \lbrace y^{(m,i)}_{\zeta}(t) \rbrace_{i \leq M_m , \zeta \in \Gamma}$ be such that
 \begin{align}
 y^{(m,i)}_{\zeta}(t) = N^{-1} \sum_{j: x^j_N \in S^{(m)}_i} \chi\big\lbrace \sigma^{(m),j}(t) = \zeta \big\rbrace.
 \end{align}
Retracing our definitions, $ \lbrace y^{(m,i)}_{\zeta}(t) \rbrace_{i \leq M_m , \zeta \in \Gamma}$ constitute a Markovian system, with an intensity function
\begin{align}
U^{(m)}_{ \alpha\mapsto\beta}: \mathbb{R}^{M_m \times |\Gamma|} \mapsto [0,\infty)
\end{align}
such that for any $(\beta,\zeta) \in \Xi$,
\begin{multline}
\mathbb{P}\bigg(  y^{(m,i)}_{\zeta}(t + h) = y^{(m,i)}_{\zeta}(t ) + N^{-1} \text{ and } y^{(m,i)}_{\beta}(t + h) = y^{(m,i)}_{\beta}(t ) - N^{-1} \; \; \bigg|  \; \; \mathcal{F}_t  \bigg) \\ = h N  f_{\zeta}\big(\theta^{(m)}_i , \beta , w^{(m,i)}(t) \big) \nu_t^{(m,i)}(\beta) + O(h^2).
\end{multline}
The Large Deviations for the tracectory of $ \lbrace y^{(m,i)}_{\zeta}(t) \rbrace_{i \leq M_m , \zeta \in \Gamma}$ now follows from existing results on the Large Deviations of Chemical Reaction Networks. See for example \cite{Budhiraja2019,Barbet2023}.  \end{proof}
 % We next note that the spatially-homogeneous empirical measure $\hat{\mu}^{(m),N}_{\sigma}$ is an arbitrarily good approximation to  $\hat{\mu}^{N}_{\sigma}$.
% \begin{lemma}
% For any $\epsilon > 0$, there exists $m > 0$ and $\mathfrak{N}_m$ such that for all $N \geq \mathfrak{N}_m$,
% \begin{align}
% d_T \big( \bar{\mu}^{(m),N}  ,   \grave{\mu}^{(m),N}  \big) \leq \epsilon
% \end{align}
% \end{lemma}
Our next task is to demonstrate that the spatially-discretized empirical flux $\bar{\mu}^{(m),N}_T$ is an excellent approximation to $\bar{\mu}^N_T$. First, we require that $\bar{\mu}^N_T$ inhabits a compact set with very high probability. To this end, for $T, l > 0$, define the constant $c_l$ to be such that
\begin{align}
\lsup{N} N^{-1} \log \mathbb{P}\bigg( \bar{\mu}^N_T \notin \hat{\mathcal{K}}_{T,l} \bigg) \leq - l, \label{eq: asymptotic K T l property}
\end{align}
where 
\begin{equation} \label{eq: K l T set}
\hat{\mathcal{K}}_{T,l} = \bigg\lbrace \mu \in \mathcal{M}\big( \mathcal{E} \times [0,T] \big)^{\Xi} \; :  \;  \sup_{(\alpha,\beta) \in \Xi } \mu_{\alpha \mapsto \beta} \big( \mathcal{E} \times [0,T] \big) \leq c_l \bigg\rbrace .
\end{equation}

\begin{lemma} \label{Lemma exponentially tight}
For any $T,l > 0$, there exists $c_l > 0$ such that \eqref{eq: K l T set} and \eqref{eq: asymptotic K T l property} hold.
\end{lemma}
The proof of Lemma \ref{Lemma exponentially tight} is standard (similar to the proof of Lemma \ref{Lemma K m bound}) and is neglected. 

Next, we  will compare the spatially-discretized system to the original system, via Girsanov's Theorem. Let $P^{(m),N}_{T} \in \mathcal{P}\big( \mathcal{D}([0,T], \Gamma)^N \big)$ be the probability law of the above system $\big( \sigma^{(m),j}(t)  \big)_{j\in I_N \fatsemi t\leq T}$.
Thanks to Girsanov's Theorem \cite{Bremaud1981},
\begin{equation}
\frac{dP^{N}_T}{dP^{(m),N}_T} (\sigma)= \exp\big( N \Upsilon^{(m),N}_T (\sigma) \big) \label{eq: Girsanov Expression Gamma T again}
\end{equation}
where $\Upsilon^{(m),N}_T: \mathcal{D}([0,T],\Gamma^N) \mapsto \mathbb{R}$ is such that
\begin{multline}
\Upsilon^{(m),N}_T(\sigma) = N^{-1} \sum_{j\in I_N} \int_0^T  \sum_{\beta : (\sigma^j(s),\beta) \in \Xi }\bigg\lbrace f_{\beta}\big(x^j_N , \sigma^j(s) ,  w^{(m)}(x^j_N, \sigma(s) ) \big) \\ - f_{\beta}(x^j_N , \sigma^j(s) , w(x^j_N, \sigma(s) ) \big) \bigg\rbrace ds \\ + N^{-1} \sum_{j\in I_N} \bigg[ \sum_{s \leq T \; : \; \sigma^j(s^-) \neq \sigma^j(s)}  \bigg\lbrace \log  f_{\sigma^j(s)}\big(x^j_N , \sigma^j(s^-) ,  w(x^j_N, \sigma(s^-) ) \big) \\ -  \log f_{\sigma^j(s)} \big(x^j_N , \sigma^j(s^-) ,  w^{(m)}(x^j_N, \sigma(s^-) ) \big) \bigg\rbrace \bigg]
\end{multline}%+ N^{-1} \sum_{j\in I_N}  \sum_{\beta \neq \sigma^j_T} \log\big(  f_{\beta}(x^j_N , \sigma^j(T) , \bar{w}^j_T)  \big)   .
We now state the proof of Lemma \ref{Lemma finite time intervals LDP}.
\begin{proof}
The Large Deviation Principle is a consequence of Lemma \ref{Lemma exponentially tight} and Lemma \ref{Lemma Continuous Function Varadhans Integral Lemma} (further below), thanks to Bryc's Inverse Varadhan Lemma \cite{Dembo1998}. 
\end{proof}
 
 \begin{lemma} \label{Lemma Continuous Function Varadhans Integral Lemma}
 Let $\mathcal{H}: \mathcal{M}\big( \mathcal{E} \times [0,T] \big)^{\Xi} \mapsto \mathbb{R}$ be continuous and bounded. Then
 \begin{align}
 \lim_{N \mapsto \infty} N^{-1} \log \mathbb{E}\bigg[ \exp\bigg( N \mathcal{H}\big( \bar{\mu}^N_T \big) \bigg) \bigg] = \sup_{\mu \in  \mathcal{M}( \mathcal{E} \times[0,T] )^{\Xi} } \big\lbrace \mathcal{H}(\mu) - \mathcal{G}_T(\mu) \big\rbrace .
 \end{align}
  \end{lemma}
 \begin{proof}
 Let $c> 0$ be such that $|\mathcal{H}(\mu) | \leq c$. \newline
 
 \textit{Step 1: Upper Bound} \newline
 
 We start by demonstrating that
   \begin{align} \label{eq: lsup inequality in one direction}
\lsup{N} N^{-1} \log \mathbb{E}\bigg[ \exp\bigg( N \mathcal{H}\big( \bar{\mu}^N_T \big) \bigg) \bigg] \leq \sup_{\mu \in  \mathcal{M}( \mathcal{E} \times[0,T] )^{\Xi} } \big\lbrace \mathcal{H}(\mu) - \mathcal{G}_T(\mu) \big\rbrace .
 \end{align}
  %Let $\tilde{\mathcal{K}}_m \subset \mathcal{K}_m$ (which is defined in \eqref{eq: K m set}) consist of all $\mu \in \mathcal{K}_m$ such that for all $(\alpha,\beta) \in \Xi$,
% \begin{align}
% \mu_{\alpha\mapsto\beta}\big( \mathcal{E}^{(m)} \times [0,\infty) \big) = 1.
% \end{align}
% Write $\hat{\mathcal{K}}_m \subset \mathcal{M}\big( \mathcal{E}^{(m)} \times [0,T] \big)^{\Xi}$ to be the set of all measures that can be obtained by restricting a measure in $\mathcal{K}_m$ to the time interval $[0,T]$.  We find that
For the set $\hat{\mathcal{K}}_{T,l} $ defined in \eqref{eq: K l T set},
 \begin{multline}
\lsup{N} N^{-1} \log \mathbb{E}\bigg[ \exp\bigg( N \mathcal{H}\big( \bar{\mu}^N_T \big) \bigg) \bigg] \leq \\   \max \bigg\lbrace \lsup{N} N^{-1} \log \mathbb{E}\bigg[ \chi\big\lbrace \bar{\mu}^N_T \in \hat{\mathcal{K}}_{T,l}   \big\rbrace \exp\bigg( N \mathcal{H}\big( \bar{\mu}^N_T \big) \bigg) \bigg] \\
\; , \;  \lsup{N} N^{-1} \log \mathbb{E}\bigg[ \chi\big\lbrace \bar{\mu}^N_T \notin \hat{\mathcal{K}}_{T,l}  \big\rbrace \exp\bigg( N \mathcal{H}\big( \bar{\mu}^N_T \big) \bigg) \bigg]  \bigg\rbrace  
  \end{multline}
  Starting with the last term on the RHS, using the fact that $\mathcal{H}$ is upperbounded by $c$,
  \begin{align}
   \lsup{N}  N^{-1} \log \mathbb{E}\bigg[& \chi\big\lbrace \bar{\mu}^N_T \notin \hat{\mathcal{K}}_{T,l}  \big\rbrace \exp\bigg( N \mathcal{H}\big( \bar{\mu}^N_T \big) \bigg) \bigg]  \nonumber \\
   \leq &   \lsup{N} N^{-1} \log \mathbb{E}\bigg[ \chi\big\lbrace \bar{\mu}^N_T \notin \hat{\mathcal{K}}_{T,l}  \big\rbrace  \bigg] + c  
   \leq   -l + c
 \end{align}
 thanks to Lemma \ref{Lemma K m bound}. 
  
 Prokhorov's Theorem implies that $\hat{\mathcal{K}}_{T,l} $ is compact, and hence there exist elements $\lbrace \gamma^{(m)}_i \rbrace_{1\leq i \leq O_l} \subset \hat{\mathcal{K}}_{T,l} $ such that
 \begin{align}
\hat{\mathcal{K}}_{T,l}  \subseteq \bigcup_{i=1}^{O_l} B_{l}\big( \gamma^{(m)}_i \big)
 \end{align}
 where
 \begin{align}
 B_{l}\big( \gamma^{(m)}_i \big) = \big\lbrace \zeta \in  \mathcal{M}\big( \mathcal{E}^{(m)} \times [0,T] \big)^{\Xi} \; : \; d_{T} \big( \zeta , \gamma^{(m)}_i \big) < l^{-1} \big\rbrace .
 \end{align}
We thus obtain that
 \begin{multline}
  \lsup{N} N^{-1} \log \mathbb{E}\bigg[ \chi\big\lbrace \bar{\mu}^N_T \in  \hat{\mathcal{K}}_{T,l}  \big\rbrace \exp\bigg( N \mathcal{H}\big( \bar{\mu}^N_T \big) \bigg) \bigg] = \\
  \max_{i \leq O_l}\bigg\lbrace \lsup{N} N^{-1} \log \mathbb{E}\bigg[ \chi\big\lbrace \bar{\mu}^N_T \in  B_{m}\big( \gamma^{(m)}_i \big) \big\rbrace \exp\bigg( N \mathcal{H}\big( \bar{\mu}^N_T \big) \bigg) \bigg] \bigg\rbrace .
 \end{multline} 
  Define $\tilde{\mu}^{(m),N}_T \in \mathcal{M}\big( \mathcal{E} \times [0,T] \big)^{\Xi}$ to be such that, writing the components as \newline$\tilde{\mu}^{(m),N}_T = \big( \tilde{\mu}_{\alpha\mapsto\beta }^{(m),N} \big)_{(\alpha,\beta) \in \Xi}$,
 \begin{align}
\tilde{\mu}_{\alpha\mapsto\beta }^{(m),N}(A \times B) = N^{-1} \sum_{j\in I_N}  \chi\big\lbrace x^{  j}_N \in A \big\rbrace \sum_{t \in B} \chi \big\lbrace \sigma^{(m),j}(t^-) = \alpha , \sigma^{(m),j}(t) = \beta  \big\rbrace .
\end{align} 
 Furthermore, by Girsanov's Theorem,
 \begin{multline}
  \mathbb{E}\bigg[ \chi\big\lbrace \bar{\mu}^N_T \in  B_{m}\big( \gamma^{(m)}_i \big) \big\rbrace \exp\bigg( N \mathcal{H}\big( \bar{\mu}^N_T \big) \bigg) \bigg]  = \\   \mathbb{E}\bigg[ \chi\big\lbrace \tilde{\mu}^{(m),N}_T \in  B_{m}\big( \gamma^{(m)}_i \big) \big\rbrace \exp\bigg( N \mathcal{H}\big( \tilde{\mu}^{(m),N}_T \big) +  N \Upsilon^{(m)}_T\big(\sigma^{(m)} \big) \bigg) \bigg]   .
 \end{multline}   
  It is immediate from the definition that
  \begin{align}
  d_T\big( \tilde{\mu}^{(m),N}_T , \bar{\mu}^{(m),N}_T \big) &\leq \sup_{j\leq N} d_{\mathcal{E}}(x^j_N , x^{(m),j}_N) \\
  &\leq 2m^{-1}. \label{eq: bound difference tilde mu m N T}
  \end{align}
  Since $\mathcal{H}$ must be uniformly continuous over the compact set $\hat{\mathcal{K}}_{T,l}$, for any $\epsilon > 0$ for large enough $m$ it must be that
  \begin{align}
  \bigg|  \mathcal{H}\big( \bar{\mu}^{(m),N}_T \big) -  \mathcal{H}\big( \tilde{\mu}^{(m),N}_T \big) \bigg| \leq \epsilon.
  \end{align}
Furthermore Lemma \ref{Lemma Upsilon Difference epsilon } implies that as long as $m$ is large enough,
\begin{align}
  \Upsilon^{(m)}_T\big(\sigma^{(m)} \big) \leq \epsilon.
\end{align}
We thus find that, as long as $m$ is large enough,
  \begin{align*}
  \lsup{N} N^{-1} \log \mathbb{E}\bigg[ \chi\big\lbrace \bar{\mu}^{(m),N}_T \in  B_{m}\big( \gamma^{(m)}_i \big) \big\rbrace \exp\bigg( N \mathcal{H}\big( \bar{\mu}^{(m),N}_T \big) +  N \Upsilon^{(m)}_T\big(\sigma^{(m)} \big) \bigg) \bigg] \\
  \leq \lsup{N} N^{-1} \log \mathbb{E}\bigg[ \chi\big\lbrace \bar{\mu}^{(m),N}_T \in  B_{m}\big( \gamma^{(m)}_i \big) \big\rbrace \exp\bigg( N \mathcal{H}\big( \bar{\mu}^{(m),N}_T \big) +  2N \epsilon \bigg) \bigg] .
  \end{align*}
  Applying Varadhan's Integral Lemma \cite{Dembo1998} to the Large Deviations result of Lemma \ref{Large Deviations Spatially Discretized System} implies that
  \begin{multline}
   \lsup{N} N^{-1} \log \mathbb{E}\bigg[ \chi\big\lbrace \bar{\mu}^{(m),N}_T \in  B_{m}\big( \gamma^{(m)}_i \big) \big\rbrace \exp\bigg( N \mathcal{H}\big( \bar{\mu}^{(m),N}_T \big) +  2N \epsilon \bigg) \bigg] \\ =2 \epsilon + \sup_{\mu \in  B_{m}( \gamma^{(m)}_i )} \bigg\lbrace  \mathcal{H}\big(\mu) -  \mathcal{G}^{(m)}_T(\mu) \bigg\rbrace .
  \end{multline}
 Since we can take $\epsilon$ to be arbitrarily small, we therefore obtain that
 \begin{align}
 \lsup{N} N^{-1} \log \mathbb{E}\bigg[ \chi\big\lbrace \bar{\mu}^N_T \in & \hat{\mathcal{K}}_{T,l} \big\rbrace \exp\bigg( N \mathcal{H}\big( \bar{\mu}^N_T \big) \bigg) \bigg]\nonumber \\  &\leq \lim_{m\to\infty}  \sup_{i \leq O_m}\sup_{\mu \in  B_{m}( \gamma^{(m)}_i )} \bigg\lbrace  \mathcal{H}\big(\mu) -  \mathcal{G}^{(m)}_T(\mu) \bigg\rbrace \nonumber \\
 &=  \lim_{m\to\infty}  \sup_{ \mu \in \hat{\mathcal{K}}_{T,l} } \bigg\lbrace  \mathcal{H}(\mu) -  \mathcal{G}^{(m)}_T(\mu) \bigg\rbrace  \nonumber \\
  &\leq  \sup_{ \mu \in \hat{\mathcal{K}}_{T,l} } \bigg\lbrace  \mathcal{H}(\mu) -  \mathcal{G}_T(\mu) \bigg\rbrace ,
 \end{align}
 thanks to Lemma \ref{Lemma Upper Bound m convergence rate function}.
 
  \textit{Step 2: Lower Bound} \newline
 
 We now turn to proving that
 \begin{align}
\linf{N} N^{-1} \log \mathbb{E}\bigg[ \exp\bigg( N \mathcal{H}\big( \bar{\mu}^N_T \big) \bigg) \bigg] \geq \sup_{\mu \in  \mathcal{M}( \mathcal{E} \times[0,T] )^{\Xi}} \big\lbrace \mathcal{H}(\mu) - \mathcal{G}_T(\mu) \big\rbrace .
 \end{align}
 Let
 \begin{align}
 \tilde{\mathcal{M}}\big( \mathcal{E} \times  [0,T] \big)^{\Xi} 
\end{align}
 consist of all $(\mu_{\alpha\mapsto\beta})_{(\alpha,\beta) \in \Xi}$ such that (i) $\mu_{\alpha\mapsto\beta}$ has a density and (ii) the density of $\mu_{\alpha\mapsto\beta}$ is continuous. It is straightforward to check that for any {\color{blue} $\mu \in  \tilde{\mathcal{M}}\big( \mathcal{E} \times  [0,T] \big)^{\Xi} $} such that $\mathcal{G}_T(\mu) < \infty$, there exists a sequence $(\mu_i)_{i \geq 1}$ such that (i) $\mu_i \to \mu$ and (ii) $\mathcal{G}_T(\mu_i) \to \mathcal{G}_T(\mu)$. This means that
 \begin{align}
 \sup_{\mu \in  \mathcal{M}( \mathcal{E} \times[0,T] )^{\Xi}} \big\lbrace \mathcal{H}(\mu) - \mathcal{G}_T(\mu) \big\rbrace
 = \sup_{\mu \in  \tilde{\mathcal{M}}( \mathcal{E} \times[0,T] )^{\Xi}} \big\lbrace \mathcal{H}(\mu) - \mathcal{G}_T(\mu) \big\rbrace .
 \end{align} 
 Let $\xi \in  \tilde{\mathcal{M}}\big( \mathcal{E} \times [0,T] \big)^{\Xi} $ be an arbitrary measure such that 
  \begin{equation}
  \mathcal{G}_T(\xi) < \infty .
  \end{equation}
 Since $\xi$ is arbitrary, it suffices that we show that
  \begin{align} \label{eq: to establish lower bound xi 1}
  \lim_{\delta \to 0^+} \linf{N} N^{-1} \log  \mathbb{E}\bigg[ \chi\big\lbrace d_T\big( \bar{\mu}^N_T , \xi \big) < \delta \big\rbrace \exp\bigg( N \mathcal{H}\big( \bar{\mu}^N_T \big) \bigg) \bigg] \geq \mathcal{H}(\xi) - \mathcal{G}_T(\xi).
  \end{align}
  Since $\mathcal{H}$ is continuous, the variation of $\mathcal{H}$ over the set $\big\lbrace \mu \; : \;  d_T\big( \mu  , \xi \big) < \delta \big\rbrace$ gets arbitrarily small as $\delta \to 0$. In order that \eqref{eq: to establish lower bound xi 1} holds, it thus suffices that we prove that
    \begin{align}\label{eq: to establish lower bound xi 2}
  \lim_{\delta \to 0^+} \linf{N} N^{-1} \log  \mathbb{P}\bigg(  d_T\big( \bar{\mu}^N_T , \xi \big) < \delta \bigg) \geq   - \mathcal{G}_T(\xi).
  \end{align}
  Now
 \begin{align}\label{eq: to establish lower bound xi 2 restated}
 \mathbb{P}\bigg(  d_T\big( \bar{\mu}^N_T , \xi \big) < \delta \bigg) 
 =   \mathbb{E}\bigg[ \chi\big\lbrace  d_T\big( \tilde{\mu}^N_T , \xi \big) < \delta \big\rbrace \exp\bigg(  N \Upsilon^{(m)}_T\big(\sigma^{(m)} \big)\bigg) \bigg]  .
 \end{align}
  In light of \eqref{eq: to establish lower bound xi 2 restated}, in order that \eqref{eq: to establish lower bound xi 2} holds, it suffices to prove that
  \begin{align}
    \lim_{\delta \to 0^+} \linf{N} N^{-1} \log  \mathbb{P}\bigg(  d_T\big( \tilde{\mu}^N_T , \xi \big) < \delta \bigg) \geq  - \mathcal{G}_T(\xi).
  \end{align}
  In turn, as noted in \eqref{eq: bound difference tilde mu m N T}, it must hold that
    \begin{align}
  d_T\big( \tilde{\mu}^{(m),N}_T , \bar{\mu}^{(m),N}_T \big)  \leq 2m^{-1},
  \end{align}
 and therefore it suffices to prove that
  \begin{align} \label{eq: temporary m xi inequality}
   \lim_{m\to\infty} \lim_{\delta \to 0^+} \linf{N} N^{-1} \log  \mathbb{P}\bigg(  d_T\big( \bar{\mu}^{(m),N}_T , \xi \big) < \delta^2 \bigg)\geq  - \mathcal{G}_T(\xi).
   \end{align}
  Now the Large Deviations estimate in Lemma \ref{Large Deviations Spatially Discretized System} implies that 
      \begin{align}
  \lim_{m\to\infty} \lim_{\delta \to 0^+} \linf{N} N^{-1} \log  \mathbb{P}\bigg(  d_T\big( \bar{\mu}^{(m),N}_T , \xi \big) < \delta^2 \bigg) \nonumber \\
  \geq -  \lim_{m\to\infty} \lim_{\delta \to 0^+} \inf_{\mu : d_T(\mu,\xi) < \delta^2} \mathcal{G}^{(m)}_T(\mu).
  \end{align}
 As long as $m$ is large enough that $d_T\big( \xi , \xi^{(m)} \big) < \delta^2$, it must be that
 \begin{align}
 \inf_{\mu : d_T(\mu,\xi) < \delta^2} \mathcal{G}^{(m)}_T(\mu) \leq \mathcal{G}^{(m)}_T\big( \xi^{(m)} \big),
 \end{align}
 and therefore
 \begin{align}
  \lim_{m\to\infty} \lim_{\delta \to 0^+} \inf_{\mu : d_T(\mu,\xi) < \delta^2} \big\lbrace \mathcal{G}^{(m)}_T(\mu) \big\rbrace &\leq  \lim_{m\to\infty}  \mathcal{G}^{(m)}_T\big( \xi^{(m)} \big)   \nonumber \\
  &\leq  \mathcal{G}_T(\xi),
  \end{align}
  thanks to Lemma \ref{Lemma uniform convergence of rate function m}. We have established \eqref{eq: temporary m xi inequality}, and therefore \eqref{eq: to establish lower bound xi 2}, as required.
 
% Let $\breve{\mu}^{(m)}$ be any measure that is such that 
 %\begin{align}
 % \sup_{ \mu \in \hat{\mathcal{K}}_m} \bigg\lbrace  \mathcal{H}(\mu) -  \mathcal{G}^{(m)}_T(\mu) \bigg\rbrace \leq m^{-1} +   \mathcal{H}\big( \breve{\mu}^{(m)} \big) -  \mathcal{G}^{(m)}_T\big( \breve{\mu}^{(m)} \big) 
%  \end{align}
 % The sequence $\big( \breve{\mu}^{(m)} \big)_{m\geq 1}$ must be compact, and let $\big( \breve{\mu}^{(m_i)} \big)_{m\geq 1}$ be a convergent subsequence, with a limit $\mu \in \mathcal{M}\big( \mathcal{E} \times [0,T] \big)^{\Xi}$. Since $\mathcal{H}$ is continuous, it must be that
 % \begin{align}
%\lim_{i\to\infty}   \mathcal{H}\big( \breve{\mu}^{(m_i)} \big) = \mathcal{H}(\mu).
 % \end{align}
%Let 
%\begin{align}
%\tilde{\gamma}^{(m)}_i \in  \mathcal{M}\big( \mathcal{E}^{(m)} \times [0,\infty) \big)^{\Xi}
%\end{align}
%be any set of measures whose restriction to $\mathcal{E}^{(m)} \times [0,m^2]$ is consistent with $\gamma^{(m)}_i$, i.e. for all $(\alpha,\beta \in \Xi)$ and for any measurable $B \subset [0,m^2]$ and any $j \leq M_m$,
%\begin{align}
%\tilde{\gamma}^{(m)}_{i,\alpha\mapsto\beta}\big( \theta^{(m)}_j \times B \big) = \gamma^{(m)}_{i,\alpha\mapsto\beta}\big( \theta^{(m)}_j \times B \big) 
%\end{align}
%We next claim that as long as $m$ and $N$ are large enough,
%\begin{align}
%\mathcal{K}_m \subseteq \bigcup_{i=1}^{O_m} \big\lbrace \zeta \in \mathcal{M}\big(
%\end{align}
 \end{proof}
 %We next use Girsanov's Theorem to transform the Large Deviation Principle for the uncoupled system to a Large Deviation Principle for the averaged system. To this end, let $\bar{P}^N_T \in \mathcal{P}\big( \mathcal{D}([0,T], \Gamma)^N \big)$ be the probability law of $\big( \bar{\sigma}^j_t \big)_{j\in I_N \fatsemi t\leq T}$.

 \begin{lemma} \label{Lemma Upper Bound m convergence rate function}
Suppose that $\mathcal{H}: \mathcal{M}\big( \mathcal{E} \times [0,T] \big)^{\Xi} \mapsto \mathbb{R}$ is continuous and bounded. %, and that $\mathcal{K} \subset \mathcal{M}\big( \mathcal{E} \times [0,T] \big)^{\Xi} \mapsto \mathbb{R}$ is compact, and contains a neighborhood containing at least one $\mu$ such that $\mathcal{G}_T(\mu) < \infty$.  
%Then, writing $\tilde{\mathcal{K}}$ to consist of all $\mu\in \mathcal{K}$ with a continuous density,
 %Also, writing 
 %\begin{align}
 % \tilde{\mathcal{M}}\big( \mathcal{E} \times  [0,T] \big)^{\Xi} 
% \end{align}
 %to consist of all $(\mu_{\alpha\mapsto\beta})_{(\alpha,\beta) \in \Xi}$ such that the density of $\mu_{\alpha\mapsto\beta}$ is continuous, it holds that 
% \begin{equation}
 % \sup_{\mu \in  \tilde{\mathcal{K}}} \big\lbrace \mathcal{H}(\mu) - \mathcal{G}(\mu) \big\rbrace 
 % =  \sup_{\mu \in  \tilde{\mathcal{K}} } \big\lbrace \mathcal{H}(\mu) - \mathcal{G}(\mu) \big\rbrace .
 %\end{equation}
 Then,
 \begin{align}
\lim_{m\to\infty} \sup_{\mu \in   \mathcal{M}\big( \mathcal{E} \times [0,T] \big)^{\Xi} } \big\lbrace \mathcal{H}(\mu) - \mathcal{G}^{(m)}(\mu) \big\rbrace \geq
 \sup_{\mu \in  \mathcal{M}\big( \mathcal{E} \times [0,T] \big)^{\Xi}  } \big\lbrace \mathcal{H}(\mu) - \mathcal{G}(\mu) \big\rbrace .
 \end{align}
 \end{lemma}
 \begin{proof}
% For any $\epsilon > 0$, let $\xi_{\epsilon} \in \mathcal{K}$ be such that each $\xi_{\epsilon,\alpha\mapsto\beta}$ has a continuous density $p_{\epsilon,\alpha\mapsto\beta}$ and that 
% \begin{align}
% \mathcal{H}(\xi_{\epsilon}) - \mathcal{G}_T(\xi_{\epsilon}) \geq  \sup_{\mu \in \mathcal{K} } \big\lbrace \mathcal{H}(\mu) - \mathcal{G}_T(\mu) \big\rbrace -\epsilon .
% \end{align}
% Thanks to Lemma \ref{Lemma uniform convergence of rate function m}, it must hold that
%  \begin{align}
% \mathcal{H}(\xi_{\epsilon}) - \mathcal{G}_T(\xi_{\epsilon}) = \lim_{m\to\infty} \bigg(   \mathcal{H}(\xi^{(m)}_{\epsilon}) - \mathcal{G}_T(\xi^{(m)}_{\epsilon}) \bigg),
% \end{align}
% and we therefore find that
%  \begin{align}
%\lim_{m\to\infty} \sup_{\mu \in \mathcal{K}} \big\lbrace \mathcal{H}(\mu) - \mathcal{G}^{(m)}(\mu) \big\rbrace \geq
% \sup_{\mu \in \mathcal{K} } \big\lbrace \mathcal{H}(\mu) - \mathcal{G}(\mu) \big\rbrace .
% \end{align}
 Let $\xi^{(m)} \in \mathcal{M}\big(\mathcal{E}^{(m)} \times [0,T]\big)^{\Xi}$, be such that
 \begin{align}
 \mathcal{H}\big(\xi^{(m)} \big) - \mathcal{G}^{(m)}\big( \xi^{(m)} \big) \geq  \sup_{\mu \in \mathcal{K}} \big\lbrace \mathcal{H}(\mu) - \mathcal{G}^{(m)}(\mu) \big\rbrace - m^{-1}.
  \end{align}
  Since
  \[
   \sup_{\mu \in  \mathcal{M}\big( \mathcal{E} \times [0,T] \big)^{\Xi} } \big\lbrace \mathcal{H}(\mu) - \mathcal{G}^{(m)}(\mu) \big\rbrace \geq -c,
  \]
  and $\mathcal{H}$ is bounded, it must be the case that $\mathcal{G}^{(m)}\big(\xi^{(m)}\big) < \infty$. This means that it must be of the form, for measurable $B \subseteq [0,\infty)$,
\begin{align}
\xi^{(m)}_{\alpha\mapsto \beta}( \theta^{(m)}_i \times B) = \kappa(S^{(m)}_i) \xi_{\alpha\mapsto\beta}^{(m,i)}(B) \label{eq: xi m decomposition}
\end{align}
for measures $\big( \xi_{\alpha\mapsto\beta}^{(m,i)} \big)_{i\leq M_m} \subset \mathcal{M}([0,\infty))$, and $\xi^{(m,i)}_{\alpha\mapsto\beta}$ must have a density $p^{(m,i)}_{\alpha\mapsto\beta}:   [0,T] \mapsto \mathbb{R}$.  Define $\tilde{p}^{(m)}_{\alpha\mapsto\beta} : \mathcal{E} \times [0,\infty) \mapsto [0,\infty)$ to be the function such that for each $i \leq M_m$, for all $x\in S^{(m)}_i$,
\begin{align}
\tilde{p}^{(m)}_{\alpha\mapsto\beta}(x,t) = p^{(m,i)}_{\alpha\mapsto\beta}(t) ,
\end{align}
and let 
\begin{align}
\tilde{\mu}^{(m)}_{\alpha\mapsto\beta} \in \mathcal{M}\big( \mathcal{E} \times [0,T] \big)
\end{align}
be the measure with density $\tilde{p}^{(m)}_{\alpha\mapsto\beta}$. 

It follows from the definition in \eqref{eq:  G (m) T definition} that
\begin{align}
\mathcal{G}^{(m)}_T \big(\xi^{(m)} \big) =  \sum_{ (\alpha , \beta) \in \Xi}   \int_{\mathcal{E}} \int_0^T   \ell\bigg( \tilde{p}^{(m)}_{\alpha \mapsto \beta}(x,t) / \hat{\lambda}^{(m)}_{\alpha\mapsto\beta}(x, t) \bigg) \hat{\lambda}^{(m)}_{\alpha\mapsto\beta}(x,t)   dt \kappa(dx) , \label{eq:  G (m) T definition 2}
 \end{align}
where if $x\in S^{(m)}_i$,
 \begin{align*}
 \hat{\lambda}^{(m)}_{\alpha\mapsto\beta}(x,t) =& f_{\beta}\big(\theta^{(m)}_i , \alpha , \hat{w}^{(m)}(x,t) \big) \nu_t^{(m,i)}(\alpha,x)   \text{ where } \\
 \hat{w}^{(m)}(x,t) =& \big( \hat{w}^{(m)}_{\zeta}(x,t) \big)_{\zeta \in \Gamma} \\
 \hat{w}^{(m)}_{\zeta}(x,t) =& \sum_{j=1}^{M_m}  \mathcal{J}(\theta^{(m)}_i , \theta^{(m)}_j) \tilde{\nu}_t^{(m)}\big(\zeta, \mathcal{S}^{(m)}_j \big)   \\
 \tilde{\nu}_t^{(m)}(\zeta , x)  =&   \nu_0\big(\zeta \times S^{(m)}_j \big) + \int_0^t \bigg( \sum_{\beta: (\beta,\zeta) \in \Xi} \tilde{p}^{(m)}_{\beta \mapsto \zeta}(x,s) - \sum_{\beta: (\zeta,\beta) \in \Xi}  \tilde{p}^{(m)}_{\zeta \mapsto \beta}(x,s)  \bigg) ds .
 \end{align*}
 %\hat{\nu}_t^{(m)}(\zeta)  =&   \nu_0\big(\zeta \times S^{(m)}_j \big) + \int_0^t \bigg( \sum_{\beta: (\beta,\zeta) \in \Xi} p^{(m,j)}_{\beta \mapsto \zeta}(s) - \sum_{\beta: (\zeta,\beta) \in \Xi}  p^{(m)}_{\zeta \mapsto \beta}(s)  \bigg) ds .
Writing $\tilde{\mu}^{(m)} = \big( \tilde{\mu}^{(m)}_{\alpha\mapsto\beta} \big)_{(\alpha,\beta) \in \Xi}$, we compute that
 \begin{align}
\mathcal{G}_T \big(\tilde{\mu}^{(m)} \big) =  \sum_{ (\alpha , \beta) \in \Xi}     \int_0^T   \ell\bigg( \tilde{p}^{(m)}_{\alpha \mapsto \beta}(x,t) / \tilde{\lambda}^{(m)}_{\alpha\mapsto\beta}( x, t) \bigg) \tilde{\lambda}^{(m)}_{\alpha\mapsto\beta}(x,t)   dt \kappa(dx) ,
 \end{align}
where 
 \begin{align*}
\tilde{\lambda}^{(m)}_{\alpha\mapsto\beta}(x,t) =& f_{\beta}\big(x , \alpha , \tilde{w}^{(m)}(x,t) \big) \tilde{\nu}_t^{(m)}(\alpha,x)   \text{ where } \\
\tilde{w}^{(m)}(x,t) =& \big( \tilde{w}^{(m)}_{\zeta}(x,t) \big)_{\zeta \in \Gamma} \\
\tilde{w}^{(m)}_{\zeta}(x,t) =& \int_{\mathcal{E}}\mathcal{J}( x , y) \tilde{\nu}_t^{(m)}( \zeta , y) \kappa(dy)  .
 \end{align*}
 Comparing the definitions, it holds that if $x\in S^{(m)}_i$, then 
 \begin{align}
 \tilde{w}^{(m)}_{\zeta}(x,t) -  w^{(m,i)}_{\zeta}(t) = \sum_{j=1}^{M_m} \int_{S^{(m)}_i}\big( \mathcal{J}(x,y) - \mathcal{J}(\theta^{(m)}_i , \theta^{(m)}_j) \big) \kappa(dy).
 \end{align}
 Since $\mathcal{J}$ is continuous, it must therefore hold that
 \begin{align*}
\lim_{m\to\infty} \sup_{t\leq T}\sup_{\zeta\in\Gamma} \sup_{i \leq M_m} \sup_{x\in S^{(m)}_i} \big| \tilde{w}^{(m)}_{\zeta}(x,t) -  w^{(m,i)}_{\zeta}(t)  \big| = 0.
 \end{align*}
 This means that, necessarily,
 \begin{align}
\lim_{m\to\infty} \big| \mathcal{G}_T \big(\tilde{\mu}^{(m)} \big)  -  \mathcal{G}^{(m)}_T \big(\xi^{(m)} \big) \big| = 0,
 \end{align}
 and hence
 \begin{align}
 \lim_{m\to\infty} \sup_{t\leq T} \sup_{\zeta\in\Gamma} \sup_{i \leq M_m} \sup_{x\in S^{(m)}_i}\big| \tilde{\lambda}^{(m)}_{\alpha\mapsto\beta}(x,t)  -  \hat{\lambda}^{(m)}_{\alpha\mapsto\beta}(x,t)  \big| = 0.
 \end{align}
 This in turn implies that
  \begin{align*}
 \lim_{m\to\infty} \sup_{t\leq T} \sup_{\zeta\in\Gamma} \sup_{i \leq M_m} \sup_{x\in S^{(m)}_i}\bigg|   \ell\bigg( \tilde{p}^{(m)}_{\alpha \mapsto \beta}(x,t) / \hat{\lambda}^{(m)}_{\alpha\mapsto\beta}(x, t) \bigg) - \ell\bigg( \tilde{p}^{(m)}_{\alpha \mapsto \beta}(x,t) / \tilde{\lambda}^{(m)}_{\alpha\mapsto\beta}( x, t) \bigg)  \bigg| = 0,
 \end{align*}
 which implies the Lemma.
 \end{proof}
 %At least one such $\xi$ must exist because $\mu \mapsto \mathcal{G}_T(\mu)$ must be lower semi-continuous.
%  Now
%  \begin{align}
%\lim_{m\to\infty} \sup_{\mu \in \mathcal{K}} \big\lbrace \mathcal{H}(\mu) - \mathcal{G}^{(m)}(\mu) \big\rbrace =
% \end{proof}
%For any $\big(\mu_{\alpha\mapsto\beta} \big)_{(\alpha,\beta)\in \Xi} \in \mathcal{M}\big( \mathcal{E} \times [0,T] \big)^{\Xi}$, let $\mu^{(m)} \in \mathcal{M}\big( \mathcal{E}^{(m)} \times [0,T] \big)^{\Xi}$ be such that for a measurable set $B \subseteq [0,\infty)$,
%\begin{align}
%\mu^{(m)}\big( \theta^{(m)}_i \times B \big) = \mu\big( S^{(m)}_i \times B \big) \label{eq: mu m definition}
%\end{align}
% be such that $\mu_{\alpha\mapsto\beta}$ has a density $p_{\alpha\mapsto\beta}: \mathcal{E} \times [0,\infty) \mapsto[0,\infty)$. Define $p^{(m)}_{\alpha\mapsto\beta} : \mathcal{E} \times [0,\infty) \mapsto [0,\infty)$ to be such that for each $j \leq M_m$ and each $x\in \mathcal{S}^{(m)}_j$,
%\begin{align}
%p^{(m)}_{\alpha\mapsto\beta}(x,t) = \kappa \big( S^{(m)}_j \big)^{-1} \int_{S^{(m)}_j} p_{\alpha\mapsto\beta}(y,t)  d\kappa(y).
%\end{align}
%Let $\mu^{(m)}_{\alpha\mapsto\beta} \in  \mathcal{M}\big( \mathcal{E} \times [0,\infty) \big)$ have density $p^{(m)}_{\alpha\mapsto\beta}(x,t)$ with respect to $\kappa \times \mu_{Leb}$.
\begin{lemma} \label{Lemma uniform convergence of rate function m}
Suppose that (i) $\mathcal{G}_T(\mu)  < \infty$ and that (ii) for each $(\alpha,\beta) \in \Xi$, $\mu_{\alpha\mapsto \beta}$ has a density $p_{\alpha\mapsto\beta}: \mathcal{E} \times [0,T] \mapsto [0,\infty)$, and that (iii) $(x,t) \mapsto p_{\alpha\mapsto\beta}(x,t)$ is continuous. % Suppose also that
%\begin{align}
%\sup_{(\alpha,\beta) \in \Xi} \bigg\lbrace  \sup_{x\in \mathcal{E}}\bigg| \frac{p_{\alpha\mapsto\beta}(x,t)}{\lambda_{\alpha\mapsto\beta}(x,t)} - 1 \bigg| \bigg\rbrace = 0,
%\end{align}
%where $\lambda_{\alpha\mapsto\beta} $ is defined in \eqref{eq: lambda alpha beta x t}. 
Then
\begin{align}
\lim_{m\to\infty} \mathcal{G}^{(m)}\big( \mu^{(m)} \big) = \mathcal{G}(\mu).\label{eq: G m convergence}
\end{align}
\end{lemma}
\begin{proof}
Following the definition in \eqref{eq:  G (m) T definition},
\begin{align}
 \mathcal{G}_T^{(m)}\big( \mu^{(m)} \big) =    \sum_{ (\alpha , \beta) \in \Xi} \sum_{i=1}^{M_m}    \int_0^T   \ell\bigg(p^{(m,i)}_{\alpha \mapsto \beta}(x,t) / \lambda^{(m,i)}_{\alpha\mapsto\beta}( t) \bigg) \lambda^{(m,i)}_{\alpha\mapsto\beta}(t)   dt ,
\end{align}
where 
\begin{equation}
p^{(m,i)}_{\alpha\mapsto\beta}(t) = \int_{S^{(m)}_i} p_{\alpha\mapsto\beta}(x,t) \kappa(dx).
\end{equation}
and
 \begin{align}
 \lambda^{(m,i)}_{\alpha\mapsto\beta}(t) =& f_{\beta}\big(\theta^{(m)}_i , \alpha , w^{(m,i)}(t) \big) \nu_t^{(m,i)}(\alpha)   \text{ where } \\
 w^{(m,i)}(t) =& \big( w^{(m,i)}_{\zeta}(t) \big)_{\zeta \in \Gamma} \\
 w^{(m,i)}_{\zeta}(t) =& \sum_{j=1}^{M_m} \mathcal{J}(\theta^{(m)}_i , \theta^{(m)}_j) \nu_t^{(m,j)}(\zeta)   \\
\nu_t^{(m,j)}(\zeta)  =&   \nu_0\big(\zeta \times S^{(m)}_j \big) + \int_0^t \bigg( \sum_{\beta: (\beta,\zeta) \in \Xi} p^{(m,j)}_{\beta \mapsto \zeta}(s) - \sum_{\beta: (\zeta,\beta) \in \Xi}  p^{(m,j)}_{\zeta \mapsto \beta}(s)  \bigg) ds
 \end{align}
 Similarly, 
 \begin{align}
\mathcal{G}_T(\mu) =  \sum_{ (\alpha , \beta) \in \Xi} \int_{\mathcal{E}}    \int_0^T    \ell\big(p_{\alpha \mapsto \beta}(x,t) / \lambda_{\alpha\mapsto\beta}(x,t) \big) \lambda_{\alpha\mapsto\beta}(x,t)   dt  \kappa(dx).
\end{align}
where
 \begin{align}
\lambda_{\alpha\mapsto\beta}(x,t) =& f_{\beta}(x,\alpha, w (x,t) ) \nu_t(\alpha,x) \label{eq: lambda alpha beta x t} \\
\nu_t(\alpha,x) =& \nu_0(\alpha,x) + \int_0^t \bigg( \sum_{\beta: (\beta,\alpha) \in \Xi} p_{\beta \mapsto \alpha}(x,s) - \sum_{\beta: (\alpha,\beta) \in \Xi} p_{\alpha \mapsto \beta}(x,s) \bigg) ds \\
w(x,t) =& \big(w_{\zeta}(x,t) \big)_{\zeta\in\Gamma} \\
w_{\zeta}(x,t) =&\int_{\mathcal{E}}   \mathcal{J}(x,y)\nu_t(\zeta,y) \kappa(dy)
\end{align}
Since $p_{\alpha\mapsto\beta}$ is continuous, it follows that %for arbitrary $T > 0$,
\begin{align} \label{eq: p m uniform convergence}
\lim_{m\to\infty} \sup_{t\leq T} \sup_{(\alpha,\beta) \in \Xi}  \sup_{i \leq M_m} \sup_{x \in S^{(m)}_i} \bigg| p_{\alpha\mapsto\beta}(x,t) - \kappa\big(S^{(m)}_i\big)^{-1} p_{\alpha\mapsto\beta}^{(m,i)}(t) \bigg|= 0.
\end{align}
\eqref{eq: p m uniform convergence} also implies that 
\begin{align}\label{eq: nu t uniform convergence}
\lim_{m\to\infty} \sup_{t\leq T} \sup_{\alpha \in \Gamma}  \sup_{i \leq M_m} \sup_{x \in S^{(m)}_i} \bigg| \nu_t(\alpha,x)  - \kappa\big(S^{(m)}_i\big)^{-1}  \nu_t^{(m,i)}(\alpha)   \bigg|= 0.
\end{align}
\eqref{eq: p m uniform convergence} and the uniform continuity of $(x,y) \mapsto \mathcal{J}(x,y)$ imply that
\begin{align} \label{eq: uniform convergence of w m}
\lim_{m\to\infty} \sup_{t\leq T}   \sup_{\alpha \in \Gamma}   \sup_{i \leq M_m} \sup_{x \in S^{(m)}_i} \bigg| w_{\alpha}(x,t) -  w^{(m,i)}_{\alpha}(t)  \bigg| = 0.
\end{align}
%This means that
%Furthermore,
%\begin{align}
%  \sup_{i \leq M_m} \sup_{x \in S^{(m)}_i} \bigg| \lambda^{(m,i)}_{\alpha\mapsto\beta}(t) \bigg|
%\end{align}
Since $f$ is Lipschitz in its third argument, \eqref{eq: p m uniform convergence}, \eqref{eq: nu t uniform convergence} and \eqref{eq: uniform convergence of w m}  imply that
 \begin{align}
 \lim_{m\to\infty} \sup_{t\leq T}   \sup_{\alpha \in \Gamma}   \sup_{i \leq M_m} \sup_{x \in S^{(m)}_i} \bigg| \lambda_{\alpha\mapsto\beta}(x,t) -  \kappa\big(S^{(m)}_i\big)^{-1} \lambda^{(m,i)}_{\alpha\mapsto\beta}(t)   \bigg| = 0.
 \end{align}
 Since $a \mapsto \ell(a)$ is continuous, we can therefore conclude that \eqref{eq: G m convergence} holds.
\end{proof}

\begin{lemma} \label{Lemma Upsilon Difference epsilon }
For any $\epsilon ,T,l> 0$, there exists $\mathfrak{m}_{\epsilon,T,l}$ such that for all $m \geq \mathfrak{m}_{\epsilon,T,l}$, if $\bar{\mu}^N \in \mathcal{K}_{T,l}$ then necessarily
\begin{equation}
 \big| \Upsilon^{(m),N}_T(\sigma )  \big| \leq \epsilon .
 \end{equation}
%\begin{align}
%\lsup{N} N^{-1} \log \mathbb{P}\bigg( \big| \Upsilon^{(m),N}_T\big(\sigma \big)  \big| \geq \epsilon \bigg) \leq - \epsilon^{-1}
%\end{align}
\end{lemma}
\begin{proof}
Recall that $f$ is bounded from below by a strictly positive constant 
The fact that $f$ is Lipschitz and means that there is a universal constant $C$ such that
\begin{multline}
\bigg| \log  f_{\sigma^j(s)}\big(x^j_N , \sigma^j(s^-) ,  w(x^j_N, \sigma(s) ) \big) -  \log f_{\sigma^j(s)} \big(x^j_N , \sigma^j(s^-) ,  w^{(m)}(x^j_N, \sigma(s) ) \big) \bigg| \\ \leq C \big| w(x^j_N, \sigma(s) ) - w^{(m)}(x^j_N, \sigma(s) ) \big|.
\end{multline}
In turn, the uniform convergence of the interactions implies that for any $\delta > 0$, one can find large enough $m$ such that for all $s \geq 0$,
\begin{align}
 \big| w(x^j_N, \sigma(s) ) - w^{(m)}(x^j_N, \sigma(s) ) \big| \leq \delta .
\end{align}
We can therefore conclude that for arbitrary $\tilde{\delta} > 0$, for all large enough $m$ it holds that 
\begin{align}
\big| \Upsilon^{(m),N}_T(\sigma) \big| &\leq \tilde{\delta} \bigg( 1 + N^{-1}  \sum_{(\alpha,\beta)\in \Xi} \sum_{j\in I_N} Z^j_{\alpha\mapsto\beta}(T)  \bigg) \\
&\leq  \tilde{\delta} \big( 1 + l  \big)
\end{align}
as long as $\bar{\mu}^N \in \mathcal{K}_{T,l}$. Thus as long as $\tilde{\delta}(1+ l) < \epsilon$, the Lemma will hold.
% Taking $\tilde{\delta} = \epsilon / 2$, it thus suffices to prove that
% \begin{align}
% \lsup{N} N^{-1} \log \mathbb{P}\bigg(  N^{-1}  \sum_{(\alpha,\beta)\in \Xi} \sum_{j\in I_N} Z^j_{\alpha\mapsto\beta}(T) \geq \epsilon / 2 \bigg) 
% \end{align}
 \end{proof}

\section{An Application: Transition Paths For Hawkes Models in Epidemiology and Neuroscience}
 
We consider a simple stochastic SIS model for a structured population. There is certainly much work that has determined the large $N$ limiting dynamics for these models \cite{Britton2019}. However to the knowledge of these authors, there does not exist a spatially-extended {\color{blue} Large Deviation Principle} in the manner of this paper. The computation of optimal transition paths for spatially extended systems has become of increasing interest in recent years \cite{Fleurantin2023,Bernuzzi2024,Avitabile2024}.

We first outline a model of $N \gg 1$ people on a structured network. The nodes of the network reside in a domain $\mathcal{E} = \mathbb{S}^1$. The position of the $j^{th}$ person is $x^j =2\pi  j/N$, and their state is $\sigma^j(t) \in \lbrace S, I \rbrace$ (i.e. susceptible or infected). The probability of a positive connection from $\alpha \mapsto \theta$ is 
$\mathcal{J}(\theta,\alpha)$: i.e. $\mathbb{P}\big( K^{jk} = 1 \big) = \phi_N \mathcal{J}(x^j_N , x^k_N)$, and $\mathbb{P}\big( K^{jk} = 0 \big) =1- \phi_N \mathcal{J}(x^j_N , x^k_N)$. We assume symmetric connections, so that $\mathcal{J}(\theta,\alpha) = \mathcal{J}(\alpha,\theta)$. We also assume that $\mathcal{J}$ is piecewise continuous. 

%\begin{align}
%\mathcal{J}(\theta,\alpha) = C \theta^{-a} \alpha^{-a}
%\end{align}
%for $a > 0$ and $0 < a < 1$. We might need to take a bounded approximation to $\mathcal{J}$ near the origin.

The probability that a susceptible person transitions to being infected over the time interval $[t, t+h]$ (For $h \ll 1$) is, for a positive parameter $\beta$,
\begin{align}
h \beta W^j(t) + O(h^2) \text{  where  } 
W^j(t) = N^{-1}\sum_{k \in I_N}J^{jk}\chi\lbrace \sigma^k(t) = I \rbrace
\end{align}
The probability that an infected person transitions back to being susceptible over a time interval $[t , t+h]$ is constant, i.e. for some $\alpha > 0$ it is
\begin{align}
\alpha h + O(h^2).
\end{align}
Write $s(\theta,t) \in [0,1]$ to represent the proportion of susceptible people at position $\theta \in \mathcal{E}$ at time $t$ in the large $N$ limit, and let $i(\theta,t) \in [0,1]$ be the proportion of infected people. Since these are the only two possibilities, it must be that
\begin{align}
s(\theta,t) + i(\theta,t) = 1.
\end{align}
{\color{blue} Let us } first write out the large $N$ limiting dynamics. This is non-stochastic, and such that
\begin{align} \label{eq: large N limiting dynamics}
\frac{ds(t,\theta)}{dt} = - \beta s(t,\theta) \int_{\mathcal{E}} \mathcal{J}(\theta,\tilde{\theta}) (1-s(t,\tilde{\theta})) d\tilde{\theta} + \alpha (1 - s(t,\theta))
\end{align}
%We are particularly interested in the case that the dynamics in \eqref{eq: large N limiting dynamics} is bistable. We wish to determine the most likely trajectory followed by the system between two stable attractors. It is well-known that SIR models exhibit bifurcations / phase transitions as $\beta / \alpha$ increases. We expect one state to be such that the number of infected is relatively small, and $s$ is constant throughout the system. Its plausible that (with the right choice of parameters) there also exists a stable heterogeneous injected state. In fact (mathematically) this situation should be similar to the bifurcation to stable bump attractors in neural fields.

The Large Deviations Rate function governing the proportion of susceptible people is $\mathcal{H}_T: H^1 \big( \mathcal{E} \times  [0,T] , [0,\infty)\big) \to \mathbb{R}$ assumes the following form.  The time derivative of $s$ is written as $\dot{s}(t,\theta)$.

The rate function assumes the form
\begin{align}
\mathcal{H}_T(s) =  \int_0^T  \int_{\mathcal{E}} L_{\theta}(\dot{s}(t,\theta), s(t)) dt d\theta 
\end{align}
where for any $\theta \in \mathcal{E}$, we define
\[
L_{\theta}: \mathbb{R} \times  \mathcal{C}(\mathcal{E}) \to \mathbb{R}
\]
is defined to be such that
\begin{align}
L_{\theta}(\dot{s},s) =& \inf\big\lbrace  \ell\big(a / \lambda_{\theta} (s) \big)\lambda_{\theta} (s) + \alpha \ell\big( b / \lbrace \alpha (1-s(\theta)) \rbrace \big) (1-s(\theta)) \nonumber \\  &\text{ where }a,b \geq 0 \text{ and } \dot{s} =b-a  \big\rbrace \label{eq: L theta u u} \\
\lambda_{\theta}:& \mathcal{C}(\mathcal{E}) \to \mathbb{R} \text{ is such that }\\
\lambda_{\theta}(s) =& \beta s(\theta) \int_{\mathcal{E}}\mathcal{J}(\theta,\tilde{\theta})\big(1- s(\tilde{\theta})\big) d\tilde{\theta} \text{ and }\\
\ell(x) =& x\log x -x + 1.
\end{align}
We note that $L_{\theta}(\dot{s},s)$ is uniquely minimized for the large $N$ limiting dynamics, i.e. $L_{\theta}(\dot{s},s) = 0$ if and only if
\[
\dot{s} = - \beta s(t,\theta) \int_{\mathcal{E}} \mathcal{J}(\theta,\tilde{\theta}) (1-s(t,\tilde{\theta})) d\tilde{\theta} + \alpha (1 - s(t,\theta)).
\]
In computing the Large Deviations rate function for trajectories that differ from the above, we are trying to understand the relative likelihood of rare noise-induced events that differ from the above dynamics.

It turns out that  the infimum in \eqref{eq: L theta u u} is uniquely realized. We note this in the following lemma.
\begin{lemma} \label{infimum using calculus}
For $s \in \mathcal{C}(\mathcal{E})$ and $\theta \in \mathcal{E}$, and any $\dot{s} \in \mathbb{R}$,
\begin{align}
L_{\theta}(\dot{s}, s ) = \alpha(1- s(\theta)) \ell \bigg(  \frac{ \lambda_{\theta}(s)}{A_\theta(\dot{s},s)}\bigg) + \lambda_{\theta}(s)  \ell \bigg(  \frac{A_\theta(\dot{s},s)}{ \lambda_{\theta}(s)}  \bigg)   ,
\end{align}
where $A_{\theta}: \mathbb{R} \times \mathcal{C}(\mathcal{E}) \to\mathbb{R}$ is such that
\begin{align}
A_{\theta}(\dot{s} , s) =  \frac{1}{2}\bigg( -\dot{s} + \big( \dot{s}^2 + 4\alpha \lambda_{\theta}(s) (1- s(\theta)) \big)^{1/2} \bigg)
\end{align}
\end{lemma}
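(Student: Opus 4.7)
The plan is to eliminate the constraint $\dot{s}=b-a$ by writing $b=a+\dot{s}$, reduce the optimization in \eqref{eq: L theta u u} to a single-variable strictly convex program, and then solve it with calculus. Throughout, let me abbreviate $\lambda := \lambda_{\theta}(s)$ and $\mu := \alpha(1-s(\theta))$; both are nonnegative, and I will first assume $\lambda,\mu>0$ (the degenerate cases being handled separately at the end).

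After substitution, the objective becomes
\begin{align*}
F(a) = \lambda\,\ell(a/\lambda) + \mu\,\ell\big((a+\dot{s})/\mu\big),
\end{align*}
on the feasible interval $a \in [\max(0,-\dot{s}),\infty)$. Because $\ell''(x)=1/x>0$ on $(0,\infty)$, $\ell$ is strictly convex, and $F$ is a sum of compositions of $\ell$ with affine maps, hence strictly convex. Furthermore $F(a)\to\infty$ as $a\to\infty$, so a unique minimizer exists. I would first check that the minimizer lies in the open interior of the feasible set: on the boundary, one of the two arguments of $\ell$ vanishes, which makes $\ell'$ tend to $-\infty$, so $F'$ diverges to $-\infty$, forcing the minimizer to be interior.

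The interior first-order condition $F'(a)=0$ reads $\log(a/\lambda)+\log\big((a+\dot{s})/\mu\big)=0$, i.e.
\begin{align*}
a(a+\dot{s}) = \lambda\mu.
\end{align*}
The positive root of this quadratic is exactly
\begin{align*}
a^{\ast} = \tfrac{1}{2}\Big(-\dot{s} + \sqrt{\dot{s}^{2}+4\lambda\mu}\Big) = A_{\theta}(\dot{s},s),
\end{align*}
and one checks that $\sqrt{\dot{s}^{2}+4\lambda\mu}\geq|\dot{s}|$, so indeed $a^{\ast}\geq\max(0,-\dot{s})$ with strict inequality when $\lambda\mu>0$, confirming feasibility. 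From the critical-point equation, $b^{\ast}=a^{\ast}+\dot{s}=\lambda\mu/a^{\ast}=\lambda\mu/A$. Substituting these into $F$ yields
\begin{align*}
L_{\theta}(\dot{s},s) = \lambda\,\ell(A/\lambda) + \mu\,\ell(\lambda/A),
\end{align*}
which is exactly the claimed formula after unpacking $\lambda,\mu$.

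The only real subtlety, and what I expect to be the main obstacle, is the degenerate situation where either $\lambda=0$ (i.e.\ $s(\theta)=0$ or $s\equiv 1$) or $\mu=0$ (i.e.\ $s(\theta)=1$). In these cases one of the two $\ell$ terms is forced to be evaluated with a zero denominator, and the infimum is finite only for particular signs of $\dot{s}$; one must adopt the convention $0\cdot\ell(x/0)=+\infty$ if $x>0$ and $0$ if $x=0$, and verify that the formula in the lemma agrees with $L_{\theta}$ under this convention by taking limits $\lambda\downarrow 0$ or $\mu\downarrow 0$ in the closed-form expression. All remaining steps are routine.
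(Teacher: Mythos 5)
Your proof is correct and takes essentially the same route as the paper's: eliminate the constraint via $b=a+\dot{s}$, observe the resulting one-variable problem is convex, solve the first-order condition $a(a+\dot{s})=\lambda_{\theta}(s)\,\alpha(1-s(\theta))$, and select the nonnegative root $A_{\theta}(\dot{s},s)$. Your extra care about interiority of the minimizer (the boundary derivative diverging to $-\infty$) and the degenerate cases $\lambda_{\theta}(s)=0$ or $s(\theta)=1$ goes beyond what the paper records, but is fully consistent with its argument.
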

\begin{proof}
Fixing $\dot{s}, s$, this is effectively a 1d optimization problem (fixing $b = \dot{s} + a$) of the function
\[
\tilde{L}(a) := \ell\big(a / \lambda_{\theta} (s) \big)\lambda_{\theta} (s) + \alpha \ell\big( (\dot{s} + a) / \lbrace \alpha (1-s(\theta)) \rbrace \big) (1-s(\theta)),
\]
with domain $a \geq \max \big\lbrace 0, -\dot{s} \big\rbrace$. Since $\ell$ is convex, it must be that $a \mapsto \tilde{L}(a)$ is convex, and the infimum must occur at points such that $\partial_a \tilde{L}_a = 0$. We differentiate and find that the optimal $a$ must be such that
\begin{align}
\log\bigg( \frac{a}{\lambda_{\theta}(s)} \bigg) + \log\bigg( \frac{ \dot{s}(\theta) + a}{ \alpha(1-s(\theta))} \bigg) = 0.
\end{align}
This means that
\begin{align}
a ( \dot{s}(\theta) + a) = \alpha \lambda_{\theta}(s) (1-s(\theta))
\end{align}
and therefore
\begin{align}
a^2 + a \dot{s}(\theta) -  \alpha\lambda_{\theta}(s)(1-s(\theta)) = 0.
\end{align}
Since $a \geq 0$, the only valid root is
\begin{align}
a = \frac{1}{2}\bigg( -\dot{s}(\theta) + \big( \dot{s}(\theta)^2 + 4\alpha \lambda_{\theta}(s) (1- s(\theta)) \big)^{1/2} \bigg)
\end{align}
\end{proof}
\begin{lemma}
For every $\theta \in \mathcal{E}$ and $s \in \mathcal{C}(\mathcal{E})$, the function $\dot{s} \mapsto L_{\theta}(\dot{s},s)$ is strictly convex.
\end{lemma}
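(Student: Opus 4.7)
The plan is to realize $L_\theta(\dot s, s)$ as the infimum, over a one-parameter family of affine slices, of a jointly strictly convex function of $(a,b)$, and then transfer strict convexity from the ambient function to the value function. Fix $s \in \mathcal{C}(\mathcal{E})$ and $\theta \in \mathcal{E}$, and abbreviate $\lambda = \lambda_\theta(s)$ and $\mu = \alpha(1-s(\theta))$. By definition,
\[
L_\theta(\dot s, s) = \inf\big\lbrace \Phi(a,b) : a,b \geq 0,\ b-a = \dot s \big\rbrace, \qquad \Phi(a,b) := \lambda\, \ell(a/\lambda) + \mu\, \ell(b/\mu).
\]
Since $\ell''(x) = 1/x > 0$ on $(0,\infty)$ and $\ell$ extends continuously to $0$, the function $\Phi$ is strictly convex on $(\mathbb{R}^+)^2$ whenever $\lambda,\mu > 0$, which is the nondegenerate case I would address first.

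Next I would eliminate the linear constraint by setting $g(a, \dot s) := \Phi(a, a + \dot s)$. Strict convexity of $\Phi$ is preserved under the invertible linear change of variables $(a,\dot s) \mapsto (a, a+\dot s)$, so $g$ is jointly strictly convex on the cone $\{a \geq 0,\ a + \dot s \geq 0\}$, and $L_\theta(\dot s, s) = \inf_{a \geq \max\{0,-\dot s\}} g(a, \dot s)$. To conclude, fix $\dot s_1 \neq \dot s_2$ and $t \in (0,1)$, and let $a_1^*, a_2^*$ be the unique minimizers furnished by Lemma \ref{infimum using calculus}. Setting $a_t^* = (1-t) a_1^* + t a_2^*$ and $\dot s_t = (1-t)\dot s_1 + t \dot s_2$, the constraints $a_t^* \geq 0$ and $a_t^* + \dot s_t \geq 0$ are preserved by convex combination, so $(a_t^*, \dot s_t)$ is feasible. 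Since $\dot s_1 \neq \dot s_2$, the pairs $(a_1^*, \dot s_1)$ and $(a_2^*, \dot s_2)$ are distinct, and joint strict convexity of $g$ yields
\[
L_\theta(\dot s_t, s) \leq g(a_t^*, \dot s_t) < (1-t)\, g(a_1^*, \dot s_1) + t\, g(a_2^*, \dot s_2) = (1-t)\, L_\theta(\dot s_1, s) + t\, L_\theta(\dot s_2, s).
\]

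The only real obstacle is the degenerate case $\lambda\mu = 0$: if $\lambda = 0$, then finiteness of $\Phi$ forces $a = 0$, whence $L_\theta(\dot s, s) = \mu\, \ell(\dot s / \mu)$ on $\{\dot s \geq 0\}$ and $+\infty$ elsewhere, which remains strictly convex on its effective domain; the case $\mu = 0$ is symmetric. An alternative route, should the variational argument feel unwieldy, would be to differentiate the closed-form expression in Lemma \ref{infimum using calculus} twice in $\dot s$ and verify positivity directly using $A_\theta(\dot s, s) > 0$; I would prefer the convex-analytic argument above, since it avoids the heavier algebra and makes transparent that strict convexity is purely a consequence of strict convexity of $\ell$.
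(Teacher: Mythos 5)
Your proposal is correct and follows essentially the same route as the paper: take the minimizers $a_1,a_2$ furnished by Lemma \ref{infimum using calculus}, substitute their convex combination into the infimum defining $L_\theta$, and invoke strict convexity of $\ell$. Your joint-strict-convexity framing is in fact slightly tighter than the paper's, since it makes explicit why the inequality is strict (the paper asserts strictness termwise, which tacitly requires observing that at least one of the two arguments of $\ell$ actually changes when $\dot s_1\neq\dot s_2$ — exactly your remark that the pairs $(a_1^*,\dot s_1)$ and $(a_2^*,\dot s_2)$ are distinct), and it also covers the degenerate case $\lambda_\theta(s)\,\alpha(1-s(\theta))=0$, which the paper does not address.
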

\begin{proof}
First, it is proved in Lemma \ref{infimum using calculus} that the infimum in \eqref{eq: L theta u u} is always realized at a unique $a \geq 0$. Consider $\dot{s}_1 , \dot{s}_2 \in \mathbb{R}$ and suppose that for some $\zeta \in [0,1]$, $\dot{s} = \zeta \dot{s}_1 + (1-\zeta)\dot{s}_2$. Let $a_1,a_2 \geq  0$ be the (respective) values of $a$ that realize the infimum, i.e. they are such that
\begin{align}
L_{\theta}(\dot{s}_1,s) =&  \ell\big(a_1 / \lambda_{\theta} (s) \big)\lambda_{\theta} (s) + \alpha \ell\big( (\dot{s}_1 + a_1) / \lbrace \alpha (1-s(\theta)) \rbrace \big) (1-s(\theta)) \\
L_{\theta}(\dot{s}_2,s) =&  \ell\big(a_2 / \lambda_{\theta} (s) \big)\lambda_{\theta} (s) + \alpha \ell\big( (\dot{s}_2 + a_2) / \lbrace \alpha (1-s(\theta)) \rbrace \big) (1-s(\theta)) \\
\dot{s}_1 + a_1 \geq &0 \\
\dot{s}_2 + a_2 \geq &0.
\end{align}
Write $a = \zeta a_1 + (1-\zeta)a_2$, and notice that $\dot{s} + a \geq 0$.  If we substitute $a$ into the RHS of \eqref{eq: L theta u u}, then since the function $\ell$ is strictly convex,
\begin{align}
L_{\theta}(\dot{s} , s) \leq  &\ell\big(a / \lambda_{\theta} (s) \big)\lambda_{\theta} (s) + \alpha \ell\big( (\dot{s} + a ) / \lbrace \alpha (1-s(\theta)) \rbrace \big) (1-s(\theta)) \nonumber \\  < &\zeta  \ell\big(a_1 / \lambda_{\theta} (s) \big)\lambda_{\theta} (s) + (1-\zeta) \ell\big(a_2 / \lambda_{\theta} (s) \big)\lambda_{\theta} (s) \nonumber \\
&\zeta \alpha \ell\big( (\dot{s}_1 + a_1) / \lbrace \alpha (1-s(\theta)) \rbrace \big) (1-s(\theta)) \nonumber \\ &+ (1-\zeta)  \alpha \ell\big( (\dot{s}_2 + a_2) / \lbrace \alpha (1-s(\theta)) \rbrace \big) (1-s(\theta)) \nonumber \\
=& \zeta L_{\theta}(\dot{s}_1,s) + (1-\zeta) L_{\theta}(\dot{s}_2,s).
\end{align}

\end{proof}

\subsection{Euler-Lagrange Equations for the Optimal Trajectory}

Fix an initial distribution of population $\bar{s}_0 \in \mathcal{C}(\mathcal{E})$ and a final population distribution $\bar{s}_T \in  \mathcal{C}(\mathcal{E})$. Assume that
\begin{align}
\inf_{\theta\in\mathcal{E}} \lbrace \bar{s}_0(\theta) , \bar{s}_T(\theta) \rbrace &> 0 \\
\sup_{\theta\in\mathcal{E}} \lbrace \bar{s}_0(\theta) , \bar{s}_T(\theta) \rbrace   &< 1.
\end{align}
Our main result in this section is that any optimal trajectory must satisfy the following Euler-Lagrange equations. Unfortunately, in general there will not be a unique solution to these equations. See for instance \cite{Heyman2008,Zakine2023} for more details on how to compute the optimal path numerically.
\begin{theorem}\label{Theorem Second Order Integro Differential Equation}
Suppose that $s \in \mathcal{C}([0,T], \mathcal{C}(\mathcal{E}))$ is such that
\begin{align} \label{eq: s optimal criterion}
 \mathcal{H}_T(s) = \inf\big\lbrace \mathcal{H}_T(u) \; : \; u_0 = \bar{s}_0 \text{ and }u_T = \bar{s}_T \big\rbrace
\end{align}
and for each $\theta \in \mathcal{E}$, $t\mapsto s_t(\theta)$ is twice continuously differentiable, with first and second derivatives written (respectively) as $\dot{s}_t(\theta)$ and $\ddot{s}_t(\theta)$. Any minimizer must satisfy the second-order integro-differential equation, for all $\theta \in \mathcal{E}$ and $t\in [0,T]$,
\begin{align}
\ddot{s}_t(\theta)\frac{\partial^2 L_{\theta}}{\partial \dot{s}^2} (\dot{s}, s )   + \dot{s}_t(\theta) \mathcal{O}_{\theta}(\dot{s}_t,s_t) = \mathcal{G}_{\theta}(\dot{s}_t ,s_t)
\end{align}
and $\mathcal{G}_{\theta}, \mathcal{O}_{\theta}: \mathcal{C}(\mathcal{E}) \times \mathcal{C}(\mathcal{E}) \mapsto \mathbb{R}$ are bounded nonlocal smooth operators defined in the course of the proof. Furthermore (since $L_{\theta}$ is convex in its first argument)
\begin{align}
\frac{\partial^2 L_{\theta}}{\partial \dot{s}^2} (\dot{s}, s ) >  0.
\end{align}
%We can construct the model to give a uniform strictly positive lower bound on $\frac{\partial^2 L_{\theta}}{\partial \dot{s}^2} (\dot{s}, s )$ this should ensure a unique solution.
\end{theorem}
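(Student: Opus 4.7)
The plan is a standard first-variation argument; the main difficulty lies in organizing the nonlocal terms. First, I would fix a smooth test function $\phi : [0,T] \times \mathcal{E} \to \mathbb{R}$ with $\phi(0,\cdot) = \phi(T,\cdot) = 0$ and note that for $\epsilon$ small the perturbed path $s + \epsilon \phi$ remains admissible (valued strictly in $(0,1)$ with the prescribed endpoints), by the strict bounds on $\bar{s}_0, \bar{s}_T$ together with continuity of $s$. Minimality in \eqref{eq: s optimal criterion} then forces the G\^ateaux derivative to vanish:
\[
\frac{d}{d\epsilon} \mathcal{H}_T(s + \epsilon \phi)\Big|_{\epsilon = 0} = 0.
\]

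Second, I would differentiate under the integral using the chain rule. Regard $L_\theta(\dot{s}(\theta), s)$ as a smooth composition with three ``slots'': $\dot{s}(\theta)$, the pointwise value $s(\theta)$ (entering through $1 - s(\theta)$), and the nonlocal functional $\lambda_\theta(s)$. The first slot produces $\int_0^T \!\int_\mathcal{E} \partial_{\dot{s}} L_\theta \, \dot{\phi}(t,\theta) \, d\theta \, dt$; the second a pointwise multiplier of $\phi(t,\theta)$; the third gives $\partial_{\lambda_\theta} L_\theta \cdot \delta_s \lambda_\theta[\phi]$ where
\[
\delta_s \lambda_\theta[\phi] = \beta \phi(\theta) \!\int_\mathcal{E} \mathcal{J}(\theta,\tilde{\theta})(1 - s(\tilde{\theta})) \, d\tilde{\theta} - \beta s(\theta) \!\int_\mathcal{E} \mathcal{J}(\theta,\tilde{\theta}) \phi(\tilde{\theta}) \, d\tilde{\theta}.
\]
Using Fubini on the second summand (relabelling $\theta \leftrightarrow \tilde{\theta}$) rewrites every contribution as an explicit coefficient against either $\phi(t,\theta)$ or $\dot{\phi}(t,\theta)$. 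Integrating the $\dot{\phi}$-term by parts in $t$ (boundary terms vanish) and applying the fundamental lemma of the calculus of variations then yields the pointwise equation
\[
\frac{d}{dt} \partial_{\dot{s}} L_\theta(\dot{s}_t, s_t) = \mathcal{R}_\theta(\dot{s}_t, s_t),
\]
where $\mathcal{R}_\theta$ denotes the collected $\phi(t,\theta)$-coefficient (a bounded nonlocal functional of $(\dot{s}_t, s_t)$).

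Third, I would expand the total time derivative on the left by the chain rule, obtaining $\ddot{s}_t(\theta) \, \partial^2_{\dot{s}} L_\theta + \dot{s}_t(\theta) \, \partial_{\dot{s}} \partial_{s(\theta)} L_\theta + \partial_{\dot{s}} \partial_{\lambda_\theta} L_\theta \cdot \partial_t \lambda_\theta(s_t)$, with
\[
\partial_t \lambda_\theta(s_t) = \beta \dot{s}_t(\theta) \!\int_\mathcal{E} \mathcal{J}(\theta,\tilde{\theta})(1 - s_t(\tilde{\theta})) \, d\tilde{\theta} - \beta s_t(\theta) \!\int_\mathcal{E} \mathcal{J}(\theta,\tilde{\theta}) \dot{s}_t(\tilde{\theta}) \, d\tilde{\theta}.
\]
One summand naturally factors $\dot{s}_t(\theta)$ and one does not. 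Grouping every piece carrying an explicit $\dot{s}_t(\theta)$ prefactor into $\mathcal{O}_\theta(\dot{s}_t, s_t)$ and moving the remaining genuinely nonlocal-in-$\dot{s}_t$ summand to the right -- where it is absorbed into $\mathcal{G}_\theta(\dot{s}_t, s_t) := \mathcal{R}_\theta(\dot{s}_t, s_t) + \beta s_t(\theta) \, \partial_{\dot{s}} \partial_{\lambda_\theta} L_\theta \int_\mathcal{E} \mathcal{J}(\theta,\tilde{\theta}) \dot{s}_t(\tilde{\theta}) \, d\tilde{\theta}$ -- produces exactly the stated form. Strict positivity $\partial^2_{\dot{s}} L_\theta > 0$ is then immediate from the preceding strict-convexity lemma.

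The main obstacle is verifying that $\mathcal{O}_\theta, \mathcal{G}_\theta$ really are bounded and smooth. Every partial appearing above is built from $\ell'$ and $\ell''$ evaluated at the ratios $\lambda_\theta / A_\theta$ and $A_\theta / (\alpha(1 - s(\theta)))$ from Lemma \ref{infimum using calculus}, together with derivatives of the square root defining $A_\theta$. All of these remain bounded precisely when $s_t(\theta), 1 - s_t(\theta), \lambda_\theta(s_t), A_\theta(\dot{s}_t, s_t)$ stay uniformly bounded away from $0$ along the optimum. The strict bounds on $\bar{s}_0, \bar{s}_T$ and the continuity of $t \mapsto s_t$ ensure such bounds near the endpoints; extending them to all of $[0,T]$ uses that a minimizer approaching the singular set $\{s = 0\} \cup \{s = 1\}$ would accrue infinite $\mathcal{H}_T$-cost. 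A short compactness-and-continuity argument is then needed to justify the interchange of differentiation and integration in the first step and to make the bookkeeping rigorous throughout the chain-rule expansion.
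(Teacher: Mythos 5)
Your proposal is correct and follows essentially the same route as the paper: a first-variation/Euler--Lagrange argument giving $\frac{d}{dt}\partial_{\dot{s}}L_\theta(\dot{s}_t,s_t)=\mathcal{G}_\theta(\dot{s}_t,s_t)$ (with the Fubini relabelling $\theta\leftrightarrow\tilde\theta$ producing the adjoint nonlocal term), followed by a chain-rule expansion of the total time derivative into $\ddot{s}_t\,\partial^2_{\dot{s}}L_\theta$ plus the Gâteaux derivative of $\partial_{\dot{s}}L_\theta$ in the $s$-slot along $\dot{s}_t$. The only divergence is bookkeeping --- you split the local factor of $\dot{s}_t(\theta)$ from the genuinely nonlocal part of that Gâteaux derivative and absorb the latter into $\mathcal{G}_\theta$, whereas the paper keeps the whole directional derivative as $\mathcal{O}_\theta$ on the left --- which is immaterial since both operators are defined only in the course of the proof, and you are in fact more explicit than the paper about admissibility of the perturbations and about why $\mathcal{O}_\theta,\mathcal{G}_\theta$ remain bounded away from the singular set $\lbrace s=0\rbrace\cup\lbrace s=1\rbrace$.
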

%{\color{blue} If you are able to directly compute the infimum in Theorem \ref{Theorem Second Order Integro Differential Equation} that would be even better}
\begin{lemma}
There is a unique $s$ satisfying \eqref{eq: s optimal criterion}.
The optimal trajectory is such that at each $\theta \in \mathcal{E}$,
\begin{align} \label{eq: Calculus of Variations identity}
\frac{d}{dt} \frac{\partial L_{\theta}}{\partial \dot{s}} (\dot{s}(t,\theta), s ) = \mathcal{G}_{\theta}(\dot{s},s)
\end{align}
where
\begin{align}
\mathcal{G}_{\theta}:& \mathcal{C}(\mathcal{E}) \times \mathcal{C}(\mathcal{E}) \to \mathbb{R} 
\end{align}
is defined to be such that for any $x \in L^2(\mathcal{E})$,
\begin{align}
\lim_{\epsilon \to 0^+} \epsilon^{-1} \int_{\mathcal{E}} \big(  L_{\theta}(\dot{s}(\theta), s +\epsilon x)  - L_{\theta}(\dot{s}(\theta), s)   \big) d\theta  = \int_{\mathcal{E}}x(\theta) \mathcal{G}_{\theta}(\dot{s},s) d\theta.
\end{align} 
\end{lemma}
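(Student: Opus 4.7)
The plan is first to establish existence and uniqueness of the minimizer, and then to derive the Euler--Lagrange identity by a classical first-variation argument.

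For existence and uniqueness, existence follows from the direct method: the admissible set $\{u \in \mathcal{U} : u_0 = \bar{s}_0, u_T = \bar{s}_T\}$ is closed in the Skorohod topology, the sublevel sets of $\mathcal{H}_T$ are compact (an immediate consequence of Corollary \ref{Corollary Large Deviations} combined with the restriction to the SIS setting), and $\mathcal{H}_T$ is lower semicontinuous, so any minimizing sequence has a subsequential limit that attains the infimum. Uniqueness reduces to strict convexity. Since $\Gamma = \{S, I\}$ and $s + i = 1$, the occupation measure $\nu_t$ is in bijection with $s_t$, so the strict convexity of $\mathcal{H}$ in its first argument (established in the preceding lemma) descends to strict convexity of $s \mapsto \mathcal{H}_T(s)$, and a standard midpoint argument rules out two distinct minimizers with the same endpoints.

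For the Euler--Lagrange identity I would fix an arbitrary test perturbation $x \in C^1_c((0, T) \times \mathcal{E})$, so that $s + \epsilon x$ still matches the boundary data. The hypothesis $0 < \inf_{\theta} \bar{s}_0,\inf_{\theta} \bar{s}_T$ and $\sup_{\theta}\bar{s}_0, \sup_{\theta}\bar{s}_T < 1$, combined with an a priori interior bound on $s_t$ (which follows from the $O(1)$ upper and lower bounds on $f_{(\alpha)}$), keeps $s + \epsilon x$ in the interior of $[0, 1]$ for all small $\epsilon > 0$, and in particular away from the singularities of $\ell$. Optimality then gives $\partial_\epsilon \mathcal{H}_T(s + \epsilon x)|_{\epsilon = 0^+} = 0$. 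Splitting the first variation of $L_\theta(\dot{s}, s)$ into a contribution from $\dot{s}$ and a contribution from $s$, and invoking the defining relation for $\mathcal{G}_\theta$ for the latter, gives
\[
0 = \int_0^T \int_{\mathcal{E}} \Big\{ \tfrac{\partial L_\theta}{\partial \dot{s}}(\dot{s}_t, s_t) \, \dot{x}(t, \theta) + \mathcal{G}_\theta(\dot{s}_t, s_t) \, x(t, \theta) \Big\} \, d\theta \, dt.
\]
Integration by parts in $t$ removes the time derivative from $x$ (boundary terms vanish since $x$ is compactly supported in $t$), and the fundamental lemma of the calculus of variations yields the pointwise identity \eqref{eq: Calculus of Variations identity}.

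The main technical obstacle is to justify these formal manipulations. Two ingredients are needed: (i) a uniform dominating function so that dominated convergence can be used to exchange the Gateaux limit with the space-time integral --- here the interior bound on $s$ is essential, because it keeps the arguments of $\ell$ and $\lambda_\theta$ bounded and uniformly away from $0$ and $\infty$; and (ii) enough regularity in $t$ for $\tfrac{d}{dt}\partial_{\dot s} L_\theta(\dot s_t, s_t)$ to make sense pointwise. For (ii) I would bootstrap: first establish the stationarity condition in its integrated (distributional) form, then use the explicit closed-form expression for $\partial_{\dot s} L_\theta$ obtained from the formulas in Lemma \ref{infimum using calculus}, which depends smoothly on $\dot s_t$, $s_t(\theta)$, and the nonlocal $\lambda_\theta(s_t)$, and combine this with continuity of $t \mapsto s_t$ (which is part of the definition of $\mathcal{U}$) to upgrade to the pointwise differentiated identity. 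The most delicate single step, in my view, is showing absolute continuity of $t \mapsto \partial_{\dot s} L_\theta(\dot s_t, s_t)$, since the existence of $\ddot s$ is not available a priori and must be recovered from the variational identity itself.
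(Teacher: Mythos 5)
Your proposal is correct and follows essentially the same route as the paper, whose entire proof consists of noting that existence follows from lower semicontinuity of $\mathcal{H}_T$ and that the identity \eqref{eq: Calculus of Variations identity} is a standard calculus-of-variations result. You have simply filled in the details the paper leaves implicit (compact level sets from Corollary \ref{Corollary Large Deviations}, uniqueness via the strict convexity established in the preceding lemma, and the first-variation/integration-by-parts argument), and your closing remarks correctly identify the regularity issues that the paper's one-line proof glosses over.
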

\begin{proof}
The fact that the infimum is realized follows from the fact that $\mathcal{H}_T$ is lower semi-continuous. The identity in \eqref{eq: Calculus of Variations identity} is a standard result from Calculus of Variations.
\end{proof}
 We now compute an expression for $\mathcal{G}_{\theta}$. To this end, let $D\lambda_{\theta}(s)\cdot x$ be the Frechet Derivative of $\lambda_{\theta}$ in the direction $x\in L^2(\mathcal{E})$, i.e.
\begin{align}
D\lambda_{\theta}(s)\cdot x= x(\theta) \beta \int_{\mathcal{E}}\mathcal{J}(\theta,\tilde{\theta})(1- s(\tilde{\theta})) d\tilde{\theta} - \beta s(\theta) \int_{\mathcal{E}} \mathcal{J}(\theta,\tilde{\theta}) x(\tilde{\theta}) d\tilde{\theta}.
\end{align} 
and let $DA_{\theta}(\dot{s},s)\cdot x$ be the Frechet Derivative of $A_{\theta}$ in the direction $x\in L^2(\mathcal{E})$, i.e.
\begin{align}
DA_{\theta}(\dot{s},s)\cdot x = \alpha \big( \dot{s}^2 + 4\alpha \lambda_{\theta}(s) (1- s(\theta)) \big)^{-1/2}\big( - \lambda_{\theta}(s)x(\theta) + (1-s(\theta)) D\lambda_{\theta}(s)\cdot x \big)
\end{align} 
We next compute the partial derivatives with respect to $\dot{s}$.
\begin{lemma}
\begin{align*}
\frac{\partial  L_{\theta}}{\partial \dot{s}}(\dot{s}, s ) =& -\frac{\partial A_{\theta}}{\partial \dot{s}}(\dot{s},s)  \log\bigg( \frac{\lambda_{\theta}(s)}{A_{\theta}(\dot{s},s)} \bigg)  \bigg\lbrace 1 + \frac{\alpha(1-s(\theta))\lambda_{\theta}(s)}{A_{\theta}(\dot{s},s)^2}\bigg\rbrace  \\
\frac{\partial^2 L_{\theta}}{\partial \dot{s}^2} (\dot{s}, s ) =& -\frac{\partial^2 A_{\theta}}{\partial \dot{s}^2}(\dot{s},s)  \log\bigg( \frac{\lambda_{\theta}(s)}{A_{\theta}(\dot{s},s)} \bigg)  \bigg\lbrace 1+ \frac{\alpha(1-s(\theta))\lambda_{\theta}(s)}{A_{\theta}(\dot{s},s)^2}\bigg\rbrace  \nonumber \\
&+\bigg(\frac{\partial A_{\theta}}{\partial \dot{s}}(\dot{s},s) \bigg)^2 \bigg\lbrace A_{\theta}(\dot{s},s)^{-1} + \frac{2\alpha (1-s(\theta)) \lambda_{\theta}(s)}{A_\theta(\dot{s},s)^3} \log \bigg( \frac{\lambda_{\theta}(s)}{A_\theta(\dot{s},s)} \bigg)\nonumber \\ &+ \frac{\alpha (1-s(\theta)) \lambda_{\theta}(s) }{A_\theta(\dot{s},s)^3} \bigg\rbrace \\
\frac{\partial A_{\theta}}{\partial \dot{s}}(\dot{s},s) =& - \frac{1}{2} + \frac{\dot{s}}{2} \big( \dot{s}^2 + 4\alpha \lambda_{\theta}(s) (1- s(\theta)) \big)^{-1/2} \\
\frac{\partial^2 A_{\theta}}{\partial \dot{s}^2}(\dot{s},s) =&  \frac{1}{2} \big( \dot{s}^2 + 4\alpha \lambda_{\theta}(s) (1- s(\theta)) \big)^{-1/2} -   \frac{\dot{s}^2}{2} \big( \dot{s}^2 + 4\alpha \lambda_{\theta}(s) (1- s(\theta)) \big)^{-3/2} 
\end{align*}
\end{lemma}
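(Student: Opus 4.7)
The formulas for $\partial A_\theta/\partial \dot{s}$ and $\partial^2 A_\theta/\partial \dot{s}^2$ are pure single-variable calculus: with $s$ fixed, the quantity $\lambda_\theta(s)(1-s(\theta))$ does not depend on $\dot s$, so one simply differentiates $A_\theta(\dot s,s) = \tfrac12(-\dot s + (\dot s^2 + 4\alpha \lambda_\theta(s)(1-s(\theta)))^{1/2})$ once and twice with respect to $\dot s$. I would write these down first to use them as black boxes in the more substantial computations that follow.

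Next, I would compute $\partial L_\theta/\partial \dot s$ starting from the closed form established in Lemma~\ref{infimum using calculus}. Introduce the shorthand $A := A_\theta(\dot s, s)$, $\lambda := \lambda_\theta(s)$ and $B := \alpha(1-s(\theta))$, so that
\begin{align*}
L_\theta(\dot s, s) = B\,\ell\!\left(\lambda/A\right) + \lambda\,\ell\!\left(A/\lambda\right).
\end{align*}
Since $\ell'(x) = \log x$ and only $A$ depends on $\dot s$, the chain rule yields
\begin{align*}
\frac{\partial L_\theta}{\partial \dot s} = \frac{\partial A}{\partial \dot s}\left[-\frac{B\lambda}{A^2}\log(\lambda/A) + \log(A/\lambda)\right].
\end{align*}
Using $\log(A/\lambda) = -\log(\lambda/A)$ and factoring $-\log(\lambda/A)$ gives exactly the claimed expression.

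For the second derivative I would differentiate the first-derivative formula $\partial L_\theta/\partial \dot s = -\tfrac{\partial A}{\partial \dot s}\,\log(\lambda/A)\,[1 + B\lambda/A^2]$ using the product rule with three factors. The two auxiliary derivatives needed are
\begin{align*}
\frac{\partial}{\partial \dot s}\log(\lambda/A) = -\frac{1}{A}\frac{\partial A}{\partial \dot s}, \qquad \frac{\partial}{\partial \dot s}\!\left[1 + \frac{B\lambda}{A^2}\right] = -\frac{2B\lambda}{A^3}\frac{\partial A}{\partial \dot s}.
\end{align*}
Substituting these, the term involving $\partial^2 A/\partial \dot s^2$ reproduces the first line of the claimed formula, while the two remaining terms (those arising from differentiating $\log(\lambda/A)$ and $1+B\lambda/A^2$) combine with the common factor $(\partial A/\partial \dot s)^2$ to give $A^{-1} + B\lambda A^{-3} + 2B\lambda A^{-3}\log(\lambda/A)$, matching the second line verbatim.

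There is no conceptual obstacle here: the only delicate point is bookkeeping of signs, since $\log(A/\lambda)$ and $\log(\lambda/A)$ differ by a sign and one must consistently factor the latter to match the stated form. Strict positivity of $A$ and $\lambda$ (guaranteed by $\lambda_\theta(s) > 0$ on the relevant domain and by $A>0$ whenever $\lambda(1-s(\theta))>0$) ensures that every logarithm and reciprocal appearing in the computation is well defined, so the elementary calculus is rigorous.
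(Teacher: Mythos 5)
Your computation is correct: differentiating the closed form of $L_\theta$ from Lemma~\ref{infimum using calculus} via $\ell'(x)=\log x$ and the chain/product rules reproduces all four stated formulas, and the sign bookkeeping checks out. The paper omits the proof of this lemma entirely (it is presented as a direct calculation), so your derivation is exactly the intended argument.
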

\begin{lemma}
\begin{equation*}
\mathcal{G}_\theta(\dot{s},s) = \mathcal{N}_{\theta}(\dot{s},s) + \beta \mathcal{M}_\theta(s)  \int_{\mathcal{E}}\mathcal{J}(\theta,\tilde{\theta})(1- s(\tilde{\theta})) d\tilde{\theta} - \beta  \int_{\mathcal{E}} \mathcal{M}_{\tilde{\theta}}(s) \mathcal{J}(\tilde{\theta},\theta) s(\tilde{\theta}) d\tilde{\theta}
\end{equation*}
where  $\mathcal{M}_{\theta} , \mathcal{N}_{\theta}: \mathcal{C}(\mathcal{E}) \times \mathcal{C}(\mathcal{E}) \to \mathbb{R}$ are such that
\begin{multline} \label{eq: M dot s s definition}
\mathcal{M}_{\theta}(\dot{s},s) =   \ell \bigg(  \frac{A_\theta(\dot{s}(\theta),s)}{ \lambda_{\theta}(s)}  \bigg) + \bigg( \frac{\alpha(1- s(\theta))} {A_{\theta}(\dot{s},s)} + \frac{A_{\theta}(\dot{s},s)} {\lambda_{\theta}(s)} \bigg)\log \bigg(  \frac{ \lambda_{\theta}(s)}{A_\theta(\dot{s},s)}\bigg)  \\
+   \alpha \big(1-s(\theta)\big)  \big( \dot{s}(\theta)^2 + 4\alpha \lambda_{\theta}(s) (1- s(\theta)) \big)^{-1/2}    \log \bigg(  \frac{ \lambda_{\theta}(s)}{A_\theta(\dot{s},s)}\bigg) \times \bigg\lbrace  -\alpha(1-s(\theta)) \frac{\lambda_{\theta}(s)}{A_{\theta}(\dot{s}(\theta),s)^2} -1  \bigg\rbrace
\end{multline}
and
\begin{multline}
\mathcal{N}_{\theta}(\dot{s},s)  =  -\alpha \ell \bigg(  \frac{ \lambda_{\theta}(s)}{A_\theta(\dot{s},s)}\bigg)  \\ + \alpha \lambda_{\theta}(s) \big( \dot{s}^2 + 4\alpha \lambda_{\theta}(s) (1- s(\theta)) \big)^{-1/2}  \log \bigg(  \frac{ \lambda_{\theta}(s)}{A_\theta(\dot{s},s)}\bigg) \bigg\lbrace \alpha(1-s(\theta)) \frac{\lambda_{\theta}(s)}{A_{\theta}(\dot{s}(\theta),s)^2} + 1 \bigg\rbrace \label{eq: N dot s s definition}
\end{multline}
%\begin{align}
%\mathcal{M}_\theta(s,\dot{s}) =  \ell \bigg( \frac{\dot{s}(\theta)}{\alpha- \alpha s(\theta) + \lambda_{\theta}(s) } \bigg)  - \log\bigg( \frac{\dot{s}(\theta)}{\alpha- \alpha s(\theta) + \lambda_{\theta}(s) } \bigg) \lambda_{\theta}(s)^{-1}
%\end{align}
\end{lemma}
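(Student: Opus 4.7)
The plan is to compute the Frechet derivative of the explicit formula for $L_\theta(\dot{s},s)$ given in Lemma \ref{infimum using calculus}, split the contributions into a pointwise piece and a nonlocal piece carried by $D\lambda_\theta(s)\cdot x$, and then integrate and relabel variables to arrive at the displayed form of $\mathcal{G}_\theta$.

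First I would introduce the shorthand $p=1-s(\theta)$, $q=\lambda_\theta(s)$, $r=A_\theta(\dot{s},s)$, and $\Delta=(\dot{s}^2+4\alpha qp)^{1/2}$, so that $L_\theta = \alpha p\,\ell(q/r)+q\,\ell(r/q)$. Using $\ell'(y)=\log y$ and the elementary identity $\ell(r/q)+(r/q)\log(q/r)=1-r/q$ (which cleans up the cross terms), I would compute
\begin{align*}
\tfrac{\partial L_\theta}{\partial p} &= \alpha\,\ell(q/r),\\
\tfrac{\partial L_\theta}{\partial q} &= \tfrac{\alpha p}{r}\log(q/r) + 1 - \tfrac{r}{q},\\
\tfrac{\partial L_\theta}{\partial r} &= -\log(q/r)\bigl(\tfrac{\alpha pq}{r^{2}}+1\bigr).
\end{align*}
By the chain rule,
\[
D_s L_\theta(\dot{s},s)\cdot x \;=\; -x(\theta)\tfrac{\partial L_\theta}{\partial p} + \tfrac{\partial L_\theta}{\partial q}\,D\lambda_\theta(s)\cdot x + \tfrac{\partial L_\theta}{\partial r}\,DA_\theta(\dot{s},s)\cdot x.
\]

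Next I would substitute the expression $DA_\theta(\dot{s},s)\cdot x = \alpha\Delta^{-1}\bigl(-q\,x(\theta)+p\,D\lambda_\theta(s)\cdot x\bigr)$ given just before the lemma, and collect terms by whether they multiply $x(\theta)$ or $D\lambda_\theta(s)\cdot x$. The coefficient of $x(\theta)$ becomes
\[
-\alpha\,\ell(q/r) + \alpha q\,\Delta^{-1}\log(q/r)\bigl(\tfrac{\alpha pq}{r^{2}}+1\bigr),
\]
which is precisely $\mathcal{N}_\theta(\dot{s},s)$. The coefficient of $D\lambda_\theta(s)\cdot x$ is
\[
\tfrac{\alpha p}{r}\log(q/r) + 1 - \tfrac{r}{q} - \alpha p\,\Delta^{-1}\log(q/r)\bigl(\tfrac{\alpha pq}{r^{2}}+1\bigr),
\]
and applying the identity $1-r/q = \ell(r/q)+(r/q)\log(q/r)$ rewrites this as $\mathcal{M}_\theta(\dot{s},s)$ as defined in \eqref{eq: M dot s s definition}.

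Finally I would integrate $\theta\mapsto D_s L_\theta(\dot{s},s)\cdot x$ over $\mathcal{E}$ and substitute the explicit form of $D\lambda_\theta(s)\cdot x$. The first piece of $D\lambda_\theta(s)\cdot x$, which is proportional to $x(\theta)$, contributes the term $\beta\mathcal{M}_\theta(s)\int_{\mathcal{E}}\mathcal{J}(\theta,\tilde\theta)(1-s(\tilde\theta))\,d\tilde\theta$. The second piece, proportional to $x(\tilde\theta)$ under an integral in $\tilde\theta$, is put into the required form by Fubini and by renaming $\theta\leftrightarrow\tilde\theta$; this yields the final term $-\beta\int_{\mathcal{E}}\mathcal{M}_{\tilde\theta}(s)\mathcal{J}(\tilde\theta,\theta)s(\tilde\theta)\,d\tilde\theta$. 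Reading off the coefficient of $x(\theta)$ in the resulting linear functional identifies $\mathcal{G}_\theta(\dot{s},s)$ as claimed; the exchanges and differentiations are justified because $\mathcal{J}$ is continuous, $s$ is bounded away from $0$ and $1$, and $\lambda_\theta$ and $A_\theta$ are smooth and bounded away from zero under these hypotheses. The only genuinely delicate step is the algebraic verification that the coefficient of $D\lambda_\theta(s)\cdot x$ collapses to the form of $\mathcal{M}_\theta$ in \eqref{eq: M dot s s definition}; everything else is routine chain rule and relabelling.
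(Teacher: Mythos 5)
Your proposal is correct and follows essentially the same route as the paper: compute the Gateaux derivative of the closed-form $L_\theta=\alpha p\,\ell(q/r)+q\,\ell(r/q)$, substitute $DA_\theta\cdot x$, collect the coefficients of $x(\theta)$ and of $D\lambda_\theta(s)\cdot x$ into $\mathcal{N}_\theta$ and $\mathcal{M}_\theta$, and then use Fubini with the relabelling $\theta\leftrightarrow\tilde\theta$ to read off $\mathcal{G}_\theta$. Your use of the identity $\ell(r/q)+(r/q)\log(q/r)=1-r/q$ and the explicit final integration step are, if anything, slightly more detailed than the paper's own argument, which stops after identifying the derivative as $\mathcal{M}_\theta\,D\lambda_\theta(s)\cdot x+x(\theta)\,\mathcal{N}_\theta$.
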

\begin{proof}
We compute that for any particular $\theta \in \mathcal{E}$,
 \begin{multline}
\lim_{\epsilon \to 0^+} \epsilon^{-1} \big( L_{\theta}(\dot{s}, s+\epsilon x ) - L_{\theta}(\dot{s}, s )  \big) = -\alpha x(\theta) \ell \bigg(  \frac{ \lambda_{\theta}(s)}{A_\theta(\dot{s},s)}\bigg) + D\lambda_{\theta}(s)\cdot x   \ell \bigg(  \frac{A_\theta(\dot{s},s)}{ \lambda_{\theta}(s)}  \bigg) \\
+ \alpha(1- s(\theta))  \log \bigg(  \frac{ \lambda_{\theta}(s)}{A_\theta(\dot{s},s)}\bigg) \bigg( \frac{A_\theta(\dot{s},s) D\lambda_{\theta}(s)\cdot x - \lambda_{\theta}(s) DA_{\theta}(\dot{s},s)\cdot x}{A_\theta(\dot{s},s)^2} \bigg) \\
+   \lambda_{\theta}(s) \log \bigg(  \frac{A_\theta(\dot{s},s)}{ \lambda_{\theta}(s)}  \bigg) \bigg(\frac{ \lambda_{\theta}(s) DA_{\theta}(\dot{s},s)\cdot x- A_{\theta}(\dot{s},s) D\lambda_{\theta}(s)\cdot x}{\lambda_{\theta}(s)^2} \bigg) \\
:= \mathcal{M}_{\theta}(\dot{s},s) D \lambda_{\theta}(s)\cdot x + x(\theta)  \mathcal{N}_{\theta}(\dot{s},s) ,
\end{multline}
where $\mathcal{M}_{\theta}(\dot{s},s) $ is defined in \eqref{eq: M dot s s definition} and $\mathcal{N}_{\theta}(\dot{s},s) $ is defined in \eqref{eq: N dot s s definition}.
\end{proof}
Differentiating, we find that
\begin{align}
\frac{d}{dt} \frac{\partial L_{\theta}}{\partial \dot{s}} (\dot{s}(t,\theta), s ) =&  \frac{\partial^2 L_{\theta}}{\partial \dot{s}^2} (\dot{s}(t,\theta), s ) \ddot{s}(t,\theta) +    \mathcal{O}_{\theta}(\dot{s},s) \text{ where } \\
\mathcal{O}_{\theta}: & \mathcal{C}(\mathcal{E}) \times \mathcal{C}(\mathcal{E}) \mapsto \mathbb{R} \text{ is such that }\\
\mathcal{O}_{\theta}(\dot{s},s) :=& \lim_{\epsilon\to 0^+} \epsilon^{-1} \bigg( \frac{\partial L_{\theta}}{\partial \dot{s}} (\dot{s}(t,\theta), s + \epsilon \dot{s} ) -\frac{\partial L_{\theta}}{\partial \dot{s}} (\dot{s}(t,\theta), s )  \bigg) .
\end{align}
It remains to find a convenient expression for $\mathcal{O}_{\theta}(\dot{s},s) $.
\begin{lemma}
\begin{multline*}
\mathcal{O}_{\theta}(\dot{s},s) =  \frac{\partial A_{\theta}}{\partial \dot{s}}(\dot{s},s)  \bigg\lbrace 1  + \frac{\alpha(1-s(\theta) \lambda_{\theta}(\dot{s},s))}{A_{\theta}(\dot{s},s)^2}\bigg\rbrace \bigg\lbrace \frac{\Delta A_{\theta}(\dot{s},s) }{A_{\theta}(\dot{s},s)} -  \frac{\Delta \lambda_{\theta}(\dot{s},s)}{\lambda_{\theta}(\dot{s},s)}  \bigg\rbrace \\
+ \alpha \dot{s} \log\bigg( \frac{\lambda_{\theta}(s)}{A_{\theta}(\dot{s},s)} \bigg)  \bigg\lbrace 1 + \frac{\alpha(1-s(\theta))\lambda_{\theta}(s)}{A_{\theta}(\dot{s},s)^2}\bigg\rbrace \bigg( \dot{s}(\theta)^2 + 4\alpha \lambda_{\theta}(s) (1-s(\theta)) \bigg)^{-3/2} \times \\ \bigg( (1-s(\theta)) \Delta \lambda_{\theta}(\dot{s},s) - \lambda_{\theta}(s) \dot{s}(\theta) \bigg)  \\
+\alpha \frac{\partial A_{\theta}}{\partial \dot{s}}(\dot{s},s)   \log\bigg( \frac{\lambda_{\theta}(s)}{A_{\theta}(\dot{s},s)} \bigg) \bigg( \frac{\dot{s}(\theta) \lambda_{\theta}(\dot{s},s)}{A_{\theta}(\dot{s},s)^2} + \frac{2 (1-s(\theta)) \lambda_{\theta}(\dot{s},s)}{A_{\theta}(\dot{s},s)^3} \Delta A_{\theta}(\dot{s},s) \\- \frac{(1-s(\theta))}{A_{\theta}(\dot{s},s)^2} \Delta \lambda_{\theta}(\dot{s},s) \bigg).% \frac{\Delta A_{\theta}(\dot{s},s)}{A_{\theta}(\dot{s},s)^3}
\end{multline*}
%\begin{multline}
%\mathcal{O}_{\theta}(\dot{s},s) =-\frac{\dot{s}(\theta)}{4}\bigg(  \dot{s}^2 + 4\alpha \lambda_{\theta}(s) (1- s(\theta)) \bigg)^{-3/2}\bigg( 4\alpha (1-s(\theta)) \Delta \lambda_{\theta}(\dot{s},s) -4\alpha \dot{s}(\theta) \Delta\lambda_{\theta}(\dot{s},s) \bigg)  \\\times\bigg\lbrace \log A_{\theta}(\dot{s},s) - \frac{\alpha(1-s(\theta))\lambda_{\theta}(s)}{A_{\theta}(\dot{s},s)^2} \log\bigg( \frac{\lambda_{\theta}(s)}{A_{\theta}(\dot{s},s)} \bigg) \bigg\rbrace  \\
%+ \frac{\partial A_{\theta}}{\partial \dot{s}}(\dot{s},s) \Delta\lambda_{\theta}(\dot{s},s)\bigg( - \frac{\alpha(1-s(\theta))}{A_{\theta}(\dot{s},s)^2} \log\bigg( \frac{\lambda_{\theta}(s)}{A_{\theta}(\dot{s},s)} \bigg) - \frac{\alpha(1-s(\theta)) }{A_{\theta}(\dot{s},s)^2}   \bigg) \\
%+ \frac{\partial A_{\theta}}{\partial \dot{s}}(\dot{s},s)  \Delta A_{\theta}(\dot{s},s)\bigg( \frac{1}{A_{\theta}(\dot{s},s)} + \frac{2\alpha(1-s(\theta))\lambda_{\theta}(s)}{A_{\theta}(\dot{s},s)^3} \log\bigg( \frac{\lambda_{\theta}(s)}{A_{\theta}(\dot{s},s)} \bigg)
%+\frac{\alpha(1-s(\theta))\lambda_{\theta}(s)}{A_{\theta}(\dot{s},s)^3}   \bigg)
%\end{multline}
where
\begin{align}
\Delta \lambda_{\theta}(\dot{s},s) &= \dot{s}(\theta) \beta \int_{\mathcal{E}}\mathcal{J}(\theta,\tilde{\theta})(1- s(\tilde{\theta})) d\tilde{\theta} - \beta s(\theta) \int_{\mathcal{E}} \mathcal{J}(\theta,\tilde{\theta}) \dot{s}(\tilde{\theta}) d\tilde{\theta} \\
\Delta A_{\theta}(\dot{s},s) &= \alpha \big( \dot{s}^2 + 4\alpha \lambda_{\theta}(s) (1- s(\theta)) \big)^{-1/2}\big( - \lambda_{\theta}(s)\dot{s}(\theta) + (1-s(\theta)) \Delta \lambda_{\theta}(s)  \big)
\end{align}
\end{lemma}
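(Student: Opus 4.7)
The quantity $\mathcal{O}_\theta(\dot s, s)$ is by definition the directional Fr\'echet derivative of the map $s \mapsto \frac{\partial L_\theta}{\partial \dot s}(\dot s, s)$ along the direction $\dot s$. The plan is to start from the closed-form expression for $\partial L_\theta/\partial \dot s$ supplied by the previous lemma, view it as a product of three $s$-dependent factors, and apply Leibniz together with the chain rule. The explicit formulas already stated in the excerpt for $D\lambda_\theta(s)\cdot \dot s = \Delta \lambda_\theta(\dot s, s)$ and $DA_\theta(\dot s, s)\cdot \dot s = \Delta A_\theta(\dot s, s)$ will do all the heavy lifting, since every remaining $s$-dependency reduces either to the elementary derivative of $s \mapsto s(\theta)$ in the direction $\dot s$ (which is $\dot s(\theta)$) or to chain-rule applications involving $\lambda_\theta(s)$ and $A_\theta(\dot s, s)$.

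Concretely, I would decompose the previous lemma's expression as
$$\frac{\partial L_\theta}{\partial \dot s}(\dot s, s) = -\underbrace{\frac{\partial A_\theta}{\partial \dot s}(\dot s, s)}_{F_1}\;\underbrace{\log\!\left(\frac{\lambda_\theta(s)}{A_\theta(\dot s, s)}\right)}_{F_2}\;\underbrace{\left\{1 + \frac{\alpha(1-s(\theta))\lambda_\theta(s)}{A_\theta(\dot s, s)^2}\right\}}_{F_3},$$
so that $\mathcal{O}_\theta(\dot s, s)$ becomes the sum of three Leibniz contributions, one per factor. Taking the directional derivative of $F_2$ along $\dot s$ gives $\Delta \lambda_\theta / \lambda_\theta - \Delta A_\theta / A_\theta$, and multiplying by $-F_1 F_3$ recovers, after a sign flip, the first line of the claimed formula. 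Differentiating $F_3$ produces three pieces, obtained by chain rule applied in turn to $(1-s(\theta))$, $\lambda_\theta(s)$ and $A_\theta(\dot s, s)^{-2}$; multiplying by $-F_1 F_2$ and collecting produces the third line of the claim. Finally, $F_1$ depends on $s$ only through the combination $\lambda_\theta(s)(1-s(\theta))$ appearing inside a square root, so its $s$-derivative along $\dot s$ gives the middle line, with the extra factor $(\dot s^2 + 4\alpha\lambda_\theta(1-s(\theta)))^{-3/2}$ coming from the chain rule applied to the $-1/2$ power.

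No analytical subtlety is expected here: the standing hypotheses on $f_{(\alpha)}$ ensure $\lambda_\theta(s) > 0$, $A_\theta(\dot s, s) > 0$, and that the square-root argument inside $A_\theta$ is strictly positive, so every quotient and logarithm is a smooth function of $s$ and the directional derivatives exist in the classical Fr\'echet sense. The only genuine hazard is sign and algebraic bookkeeping when organising the three contributions, and I would manage this by handling each factor $F_i$ in isolation and verifying, one line at a time, that the three contributions align with the three grouped terms displayed in the lemma.
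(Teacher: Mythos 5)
Your proposal is correct and is evidently the intended argument: the paper states this lemma without proof, and since $\mathcal{O}_\theta$ is by definition the Gateaux derivative of $s \mapsto \tfrac{\partial L_\theta}{\partial \dot s}(\dot s, s)$ in the direction $\dot s$, applying Leibniz to the three-factor product from the preceding lemma, with $\Delta\lambda_\theta$ and $\Delta A_\theta$ as the directional derivatives of $\lambda_\theta$ and $A_\theta$, reproduces the three displayed groups exactly (I checked that each of your $F_1$, $F_2$, $F_3$ contributions matches the corresponding line, including the sign flip from the leading minus). The only discrepancy is a typographical one in the lemma as printed --- the first line should read $\alpha(1-s(\theta))\lambda_\theta(s)$ rather than $\alpha(1-s(\theta)\lambda_\theta(\dot s,s))$ --- which your derivation implicitly corrects.
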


\section{Acknowledgements}

This work was funded by NSF-DMS 2511615.

\end{document}